\theoremstyle{plain}
\newtheorem{theorem}{Theorem}[section]
\newtheorem{corollary}[theorem]{Corollary}
\newtheorem{proposition}[theorem]{Proposition}
\newtheorem{example}[theorem]{Example}
\newtheorem{lemma}[theorem]{Lemma}
\newtheorem{definition}[theorem]{Definition}
\theoremstyle{remark}
\newcommand{\R}{{\mathbb{R}}}
\title{Morse Homotopy and Homological Conformal Field Theory}
\author{Viktor Fromm}
\begin{document}
\maketitle
\abstract{By studying spaces of flow graphs in a closed oriented manifold, we construct operations on its cohomology, parametrized by the homology of the moduli spaces of compact Riemann surfaces with boundary marked points. We show that the operations satisfy the gluing axiom of an open homological conformal field theory. This complements previous constructions due to R. Cohen et al., K. Costello and M. Kontsevich and is also the Morse theoretic counterpart to a conjectural construction of operations on the Lagrangian Floer homology of the zero section of a cotangent bundle, obtained by studying uncompactified moduli spaces of higher genus pseudoholomorphic curves. 
\section{Introduction}
A flow graph in a manifold $M$ consists of the following data: a graph $G$, a choice for every edge $e$ of $G$ of a flow $\Psi_{e}$ on $M$, and a continuous map $\boldsymbol{\gamma}: G \to M$ so that the image of each edge $e$ is a piece of a trajectory of the corresponding flow $\Psi_{e}$. Flow graphs can be used to obtain invariants of manifolds. The simplest instance of this is the Morse complex, corresponding to the case when $G$ consists of a single edge: by studying the spaces of trajectories of the gradient flow of a Morse function, one constructs a chain complex which computes the homology of a manifold. On the other hand, there are invariants which are not visible in the classical Morse complex but can be recovered using flow graphs (\cite{BeCo},\cite{CoNo},\cite{Fu}).\\ \\ In the work of R. Cohen and his collaborators (\cite{BeCo},\cite{CoNo}), flow graphs in manifolds were used to construct invariants in the form of cohomology operations (we remark that our terminology is somewhat different in that we use the term 'flow graph' instead of 'graph flow'). The operations associated to certain special graphs can be identified explicitly and turn out to correspond to invariants known from classical algebraic topology: the cup product, the Steenrod squares, the Stiefel-Whitney classes as well as the Massey product (the latter using a somewhat different approach) can all be encoded in this way. Moreover, the operations satisfy a field-theoretic law: there is a compatibility between gluing together graphs and composing the associated operations.\\ \\In this paper we construct, building upon seminal ideas of K. Fukaya (\cite{Fu}), the conformal version of this theory, obtained by studying flow graphs together with a ribbon structure on the underlying graph. By taking the ribbon structure into account in a suitable way, we define operations that are parametrized by the homology of the moduli spaces of Riemann surfaces with boundary. The construction draws upon ideas from Floer homology and Gromov-Witten theory.\\ \\
Let $\Sigma$ be a compact connected oriented surface with $m>0$ boundary components and $n_{+}+n_{-} \geq 0$ boundary marked points, partitioned into incoming and outgoing points. We assume that $2g-2+m>0$ and denote by $\mathcal{M}_{\Sigma}$ the space of complex structures on $\Sigma$, together with labellings of the marked points by positive real numbers. We will associate to every closed oriented manifold $M$ invariants in the form of linear maps \begin{equation} (H^{*}(M))^{\otimes n_{+}} \rightarrow H^{*}(\mathcal{M}_{\Sigma}) \otimes (H^{*}(M))^{\otimes n_{-}}. \label{EqHomOp} \end{equation}
Here $H^{*}$ denotes singular cohomology with coefficients in a field of characteristic zero (more precisely, for $\mathcal{M}_{\Sigma}$ cohomology with coefficients in a certain local system must be used). For example, if $\Sigma$ is a disk with two incoming and one outgoing marked point, then $\mathcal{M}_{\Sigma}$ is contractible and (\ref{EqHomOp}) is the cup product. The construction of the operations (\ref{EqHomOp}) in the general case proceeds as follows.\\ \\
Let $g$ be a Riemannian metric on $M$ and $f$ a Morse function whose gradient flow with respect to $g$ is Morse-Smale. Denote by $C^{*}(f)$ the associated Morse complex, with the grading given by the Morse index and the codifferential given by counts of trajectories of the positive gradient flow. In order to define the operations (\ref{EqHomOp}), it suffices to construct cochain maps
\begin{equation}F^{M}_{\Sigma}\colon  (C^{*}(f))^{\otimes n_{+}}  \rightarrow C^{*}(\mathcal{M}_{\Sigma}) \otimes (C^{*}(f))^{\otimes n_{-}}, \label{EqMorseOp} \end{equation} 
where $C^{*}(\mathcal{M}_{\Sigma})$ is a complex computing the cohomology of $\mathcal{M}_{\Sigma}$. To this end, the ribbon graph decomposition of Riemann surfaces (\cite{Ha},\cite{Pe}) will be used.\\ \\
A graph is a one-dimensional CW complex. We refer to the univalent vertices of a graph as the external vertices and to the vertices of valency greater than one as internal. An edge is called external if it is incident to a univalent vertex, and internal otherwise. Consider a graph $G$ together with an embedding $i: G \hookrightarrow \Sigma$, so that the univalent vertices of $G$ are mapped to the boundary marked points, the remaining points of $G$ are mapped to the interior of $\Sigma$ and so that $\Sigma$ deformation retracts to $i(G)$. Two embeddings are identified if one is obtained from the other by an isotopy which is constant on the univalent vertices. A {\it ribbon structure} on $G$ is an equivalence class of embeddings. We write $\Gamma$ for the ribbon graph and we say that $\Gamma$ has $\Sigma$ as its associated oriented surface.\\ 
\begin{figure}[!htbp]
\centering	\floatbox[{\capbeside\thisfloatsetup{capbesideposition={left,center},capbesidewidth=5cm}}]{figure}[\FBwidth]{\caption{Two ribbon graphs whose associated surfaces are a pair of pants and a torus with a disk removed respectively.}	\label{Fig1}} {\includegraphics[width=0.36\textwidth]{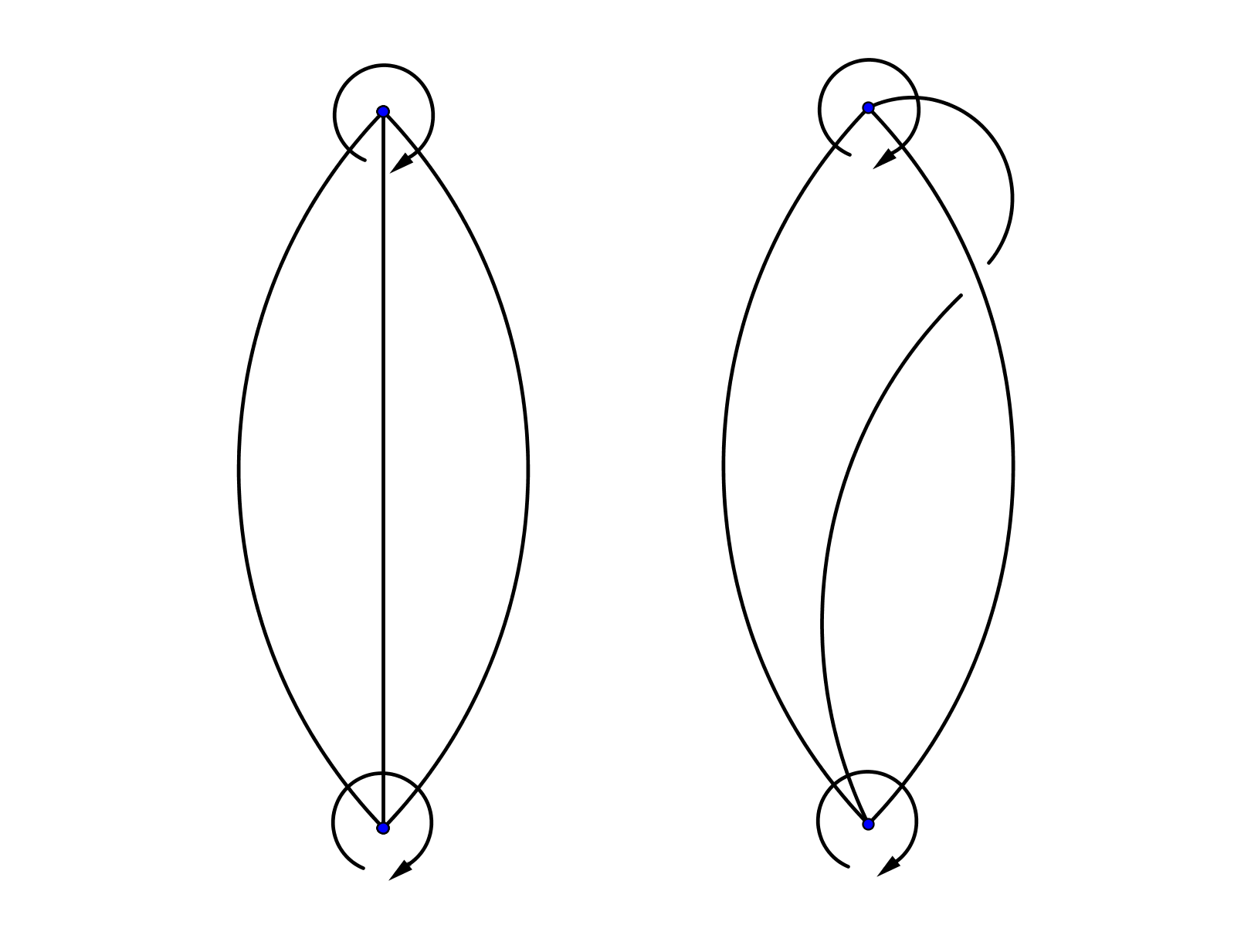}}
\end{figure} \\
A {\it half-edge} of a graph $G$ is a pair $(v,[\varphi])$, where $v$ is a vertex and $[\varphi]$ is an isotopy class of embeddings $\varphi: ([0,1],0) \hookrightarrow (G,v)$. It is a classical observation that a ribbon structure on $G$ can equivalently be defined as a cyclic ordering of the half-edges at every vertex.  An {\it orientation} of an edge is defined as an ordering of the two-element set consisting of the corresponding half-edges. A {\it metric structure} on $G$ is an assignment to every edge $e \in E(G)$ of a non-negative real number $l_{e}$.\\ \\
Assume that for every marked point $o \in \partial \Sigma$, a critical point $p_{o}$ of $f$ is fixed. In Section \ref{SecRG}, we associate to every ribbon graph $\Gamma$ as above a space $\mathcal{M}_{\Gamma,{\bf x}}({\bf p}_{+},{\bf p}_{-})$ of flow graphs in $M$. An element of $\mathcal{M}_{\Gamma,{\bf x}}({\bf p}_{+},{\bf p}_{-})$ consists of a metric structure on $\Gamma$ together with a continuous map $\boldsymbol{\gamma}: \Gamma \rightarrow M$, so that the restrictions of the map to the edges are pieces of trajectories of given vector fields ${\bf x}$ on $M$ and where incidence conditions corresponding to the points ${\bf p}_{+}, {\bf p}_{-}$ are imposed at the external vertices. For generic choice of the vector field datum {\bf x}, the space $\mathcal{M}_{\Gamma,{\bf x}}({\bf p}_{+},{\bf p}_{-})$ is a smooth manifold. Moreover, the partial compactification $\overline{\mathcal{M}}_{\Gamma,{\bf x}}({\bf p}_{+}, {\bf p}_{-})$ obtained by allowing internal edges of zero length as well as broken flow lines at the external edges is a manifold with corners. By the ribbon graph decomposition of Riemann surfaces, the space of metric ribbon graphs $\Gamma$ as above is homeomorphic to $\mathcal{M}_{\Sigma}$ (see equation (\ref{EqRibbonGrDec}) below) and thus there is a projection $\pi_{\Gamma}: \overline{\mathcal{M}}_{\Gamma,{\bf x}}({\bf p}_{+}, {\bf p}_{-}) \rightarrow \mathcal{M}_{\Sigma}$ obtained by forgetting $\boldsymbol{\gamma}$. We show that $\pi_{\Gamma}$ is a proper map. These claims are established in Section \ref{SubSecRGFlows}.\\ \\ We view $Z_{\Gamma,{\bf x}}({\bf p}_{+}, {\bf p}_{-})=(\overline{\mathcal{M}}_{\Gamma,{\bf x}}({\bf p}_{+}, {\bf p}_{-}),\pi_{\Gamma})$ as a geometric chain in $\mathcal{M}_{\Sigma}$. In Section \ref{SubSecConv} we define a chain complex $C^{BM}_{*}(\mathcal{M}_{\Sigma})$ generated by pairs $(P,f)$, where $P$ is a (not necessarily compact) oriented manifold with corners and $f: P \rightarrow \mathcal{M}_{\Sigma}$ is a proper continuous map. The complex $C^{BM}_{*}(\mathcal{M}_{\Sigma})$ computes the Borel-Moore homology of $\mathcal{M}_{\Sigma}$, the latter being isomorphic by rational Poincar\'e duality to the cohomology.  
\begin{theorem} \label{ThCohOp}
Let \begin{equation} F^{M}_{\Sigma}\colon  (C^{*}(f))^{\otimes n_{+}}  \rightarrow C^{BM}_{*}(\mathcal{M}_{\Sigma};det^{d} \otimes or) \otimes (C^{*}(f))^{\otimes n_{-}} \label{EqDefFSigma} \end{equation} be the linear map defined by
\begin{equation} {\bf p}_{+} \mapsto \sum \limits_{\Gamma, {\bf p}_{-}} Z_{\Gamma,{\bf x}}({\bf p}_{+}, {\bf p}_{-}) \otimes {\bf p}_{-},  \label{EqDefFSigmaCount} \end{equation}
where the summation is over all ${\bf p}_{-} \in (Crit(f))^{\times n_{-}}$ and over all the ribbon graphs $\Gamma$, so that every internal vertex of $\Gamma$ is trivalent. 
\begin{enumerate}
\item $F^{M}_{\Sigma}$ is a cochain map.
\item The induced map \begin{equation} H F^{M}_{\Sigma}: (H^{*}(M))^{\otimes n_{+}} \rightarrow H^{*}(\mathcal{M}_{\Sigma}; det^{\otimes d}) \otimes (H^{*}(M))^{\otimes n_{-}} \label{EqCohOp}\end{equation}  
is independent, up to a vector space isomorphism $H^{*}(M) \rightarrow H^{*}(M)$, of all choices (i. e. independent of the vector field data, of the Riemannian metric and of the Morse function). 
\end{enumerate}
\end{theorem}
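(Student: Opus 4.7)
The proof decomposes into (1) the cochain-map statement and (2) independence of choices. For (1), the strategy is the familiar one for Morse-theoretic field theories: analyze the codimension-one boundary of the manifold with corners $\overline{\mathcal{M}}_{\Gamma,\mathbf{x}}(\mathbf{p}_{+},\mathbf{p}_{-})$, push it forward to $\mathcal{M}_{\Sigma}$ via the proper map $\pi_{\Gamma}$, and show that the resulting Borel-Moore boundary matches the Morse codifferential applied at the input and output slots, so that
\[
d^{BM}\circ F^{M}_{\Sigma}\;\pm\;F^{M}_{\Sigma}\circ\delta_{\mathrm{Morse}}=0.
\]
By the compactness and transversality results of Section \ref{SubSecRGFlows}, the codimension-one boundary comes in two flavours: (E) a flow line along an external edge breaks at an intermediate critical point $q$, contributing an insertion of the Morse codifferential at that slot; and (I) an internal edge collapses to length zero, producing a ribbon graph $\Gamma'$ with a unique four-valent vertex. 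The strata of type (E) pair off, with opposite sign, against the terms of $F^{M}_{\Sigma}\circ\delta_{\mathrm{Morse}}$ acting on the Morse tensor factors.

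The delicate point is type (I). Under $\pi_{\Gamma}$ these strata land in the codimension-one cells of the interior of $\mathcal{M}_{\Sigma}$ indexed by four-valent ribbon graphs, and each such $\Gamma'$ arises as the common limit of exactly three trivalent resolutions, obtained by smoothing the four-valent vertex in the three cyclic ways consistent with the ribbon structure. Since the sum in (\ref{EqDefFSigmaCount}) runs over all trivalent $\Gamma$, each four-valent degeneration receives three contributions, and the three collapsed boundary faces are canonically identified with the same space $\overline{\mathcal{M}}_{\Gamma',\mathbf{x}}(\mathbf{p}_{+},\mathbf{p}_{-})$ but carry the orientations induced from the three resolutions $\Gamma_{1},\Gamma_{2},\Gamma_{3}$. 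I would verify by a local sign calculation, using the ribbon-graph orientation sheaf on $\mathcal{M}_{\Sigma}$, that these three orientations sum to zero in the local system $det^{d}\otimes or$. This is precisely the step where the twisted coefficients in the statement become essential: $or$ absorbs the orientation of the flow-graph moduli spaces and $det^{d}$ absorbs the contribution from the critical-point orientations, and together they make the three-term sum at each four-valent stratum vanish.

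For (2), I would run the standard continuation argument. Given two admissible data sets $(g^{0},f^{0},\mathbf{x}^{0})$ and $(g^{1},f^{1},\mathbf{x}^{1})$, a generic homotopy $(g^{t},f^{t},\mathbf{x}^{t})$ produces parametrized moduli spaces over $[0,1]$; their codimension-one boundary is of the same types (E) and (I) analysed above, plus two new faces given by the fibres over $t=0$ and $t=1$. After post-composing with the usual Morse continuation isomorphism on each $C^{*}(f^{t})$ factor, the remaining boundary pieces assemble into a chain homotopy between the two cochain maps, which implies that $HF^{M}_{\Sigma}$ is canonical up to the induced automorphism of $H^{*}(M)$.

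The main obstacle will be the orientation bookkeeping in the three-resolution cancellation of type (I), since one must match the induced orientations of the three faces with the determinant of the tangent space to $\mathcal{M}_{\Sigma}$ at the codimension-one stratum, twisted by $or$ and by $det^{d}$, and confirm that the signs sum to zero. The remaining ingredients -- transversality for the parametrized moduli spaces, properness of the continuation version of $\pi_{\Gamma}$, and gluing at the external-edge breakings -- are standard once the foundational results of Section \ref{SubSecRGFlows} are in place.
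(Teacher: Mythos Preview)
Your overall architecture---analyse the codimension-one boundary of $\overline{\mathcal{M}}_{\Gamma,\mathbf{x}}(\mathbf{p}_{+},\mathbf{p}_{-})$ for part (1), and run a continuation argument for part (2)---matches the paper's approach. However, your treatment of the type (I) strata contains a genuine error.

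You assert that a four-valent vertex in a ribbon graph has \emph{three} trivalent resolutions, and that the proof hinges on showing that the three induced orientations sum to zero. This is the count for ordinary graphs (the IHX picture), not for ribbon graphs. Given the cyclic order $(h_{1},h_{2},h_{3},h_{4})$ at the four-valent vertex, only the splittings $\{h_{1},h_{2}\}\,|\,\{h_{3},h_{4}\}$ and $\{h_{2},h_{3}\}\,|\,\{h_{4},h_{1}\}$ preserve the ribbon structure and hence the associated surface $\Sigma$; the ``crossing'' splitting $\{h_{1},h_{3}\}\,|\,\{h_{2},h_{4}\}$ changes the topology of the thickened graph and so does not occur in the ribbon-graph decomposition of $\mathcal{M}_{\Sigma}$. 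The paper states this explicitly (see Figure \ref{Fig5} and the sentence preceding Lemma \ref{LemmaProofMain1.1}): for each $\widetilde{\Gamma}$ with a single four-valent vertex there are exactly \emph{two} pairs $(\Gamma_{1},e_{1})$, $(\Gamma_{2},e_{2})$ that collapse to it. The required cancellation is therefore a pairwise sign comparison, not a three-term relation, and Lemma \ref{LemmaProofMain1.1} carries out precisely that comparison using the description of $det$ and $or$ in terms of orderings of vertices, edges, and half-edges. Your proposed ``three-term sum to zero in $det^{d}\otimes or$'' would not even be the correct statement to verify.

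For part (2) your sketch is in the right spirit and close to the paper's argument; the paper implements the continuation by introducing a real parameter $T$ (with the two data sets appearing as $T\to\pm\infty$) and building one-parameter moduli spaces $\overline{\mathcal{N}}_{\Gamma}(\mathbf{p}_{+},\mathbf{p}'_{-})$ whose boundary yields the chain homotopy (Lemma \ref{LemmaChainHomConstr}). The internal-edge collapse strata appear here as well, and their cancellation again reduces to the two-term Lemma \ref{LemmaProofMain1.1}, not a three-term identity.
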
 
Here $d$ denotes the dimension of $M$ and $det$ and $or$ are certain local systems on $\mathcal{M}_{\Sigma}$, defined in Section \ref{SubSecOr}. The appearance of local coefficients is due to the fact that the space $\overline{\mathcal{M}}_{\Gamma,{\bf x}}({\bf p}_{+}, {\bf p}_{-})$ is not canonically oriented, but rather its orientation is determined by fixing linear orderings of the vertices and of the edges of $\Gamma$ and orientations of the edges. The dependence of the orientation of $\overline{\mathcal{M}}_{\Gamma,{\bf x}}({\bf p}_{+}, {\bf p}_{-})$ on these choices is described in Section \ref{SubSecOr}. Theorem \ref{ThCohOp} is proved in Section \ref{SubSecProofCohOp}.\\ \\
We remark that evaluating the statement of Theorem \ref{ThCohOp} in the case when $n_{+}=n_{-}=0$, i. e. there are no marked points, one obtaines invariants of closed oriented manifolds in the form of cohomology classes in moduli spaces of Riemann surfaces with boundary. However, examining the degrees, one finds that these classes are trivial unless the dimension of the manifold is at most three. \\ \\
We now outline the relationship of Theorem \ref{ThCohOp} to some previous results. As was mentioned above, a construction of operations from spaces of flow graphs was first proposed in \cite{BeCo} (see \cite{CoNo} for a more recent exposition). In this latter approach, one associates to a graph $G$ a cohomology operation, parametrized by the homology of the classifying space of the automorphism group $Aut(G)$. The main difference of the construction of Theorem \ref{ThCohOp} to these previous ideas is the consideration of ribbon structures. As a consequence, the operations constructed here are parametrized by the homology of the moduli spaces of Riemann surfaces instead. \\ \\
While the operations (\ref{EqCohOp}) bear some formal resemblence to the structure of Gromov-Witten invariants, there are two pronounced differences: firstly, $\Sigma$ has non-empty boundary and secondly, the operations are parametrized by the homology of $\mathcal{M}_{\Sigma}$ instead of the homology of the Deligne-Mumford compactification. The operations (\ref{EqCohOp}) fit into the framework of an open homological conformal field theory (\cite{Se},\cite{Ge}) - the corresponding gluing axiom is proved in Section \ref{SubSecGlue}. \\ \\
There are two previously known constructions of open HCFTs (in fact, more generally, of open TCFTs. i. e. {\it topological} conformal field theories). The first is an algebraic-combinatorial approach based on an idea of M. Kontsevich in \cite{Ko} (see also \cite{Cos1}, \cite{HamLaz}) and produces an open HCFT starting with a (finite-dimensional minimal) cyclic $A_{\infty}$-algebra - the homotopy associative analogue of a differential graded algebra, equipped with a compatible inner product. The second construction, due to K. Costello, is more analytic and uses heat kernels (\cite{Cos2}). The starting point here is a so-called Calabi-Yau elliptic space - one of the simplest examples of such an object is the de Rham algebra of a closed oriented manifold. The relationship between these two approaches is well-understood: a cyclic $A_{\infty}$-algebra can be associated to each Calabi-Yau elliptic space and the open HCFT obtained via the analytic construction on the elliptic space is equivalent to the result of applying the algebraic approach to the associated $A_{\infty}$-algebra (\cite{Cos2}, Section 5). \\ \\
Applying these ideas to the de Rham algebra, one obtains operations which have the same form as those constructed in Theorem \ref{ThCohOp} and which we denote by $H F^{dR}_{\Sigma}$. In a forthcoming paper, we will show that the constructions are equivalent, i. e. the operations $H F^{M}_{\Sigma}$ and $H F^{dR}_{\Sigma}$ coincide up to a vector space isomorphism $H^{*}(M) \rightarrow H^{*}(M)$. This explains how to compute the maps $H F^{M}_{\Sigma}$ from the de Rham algebra of the manifold with its product and inner product using the projection and homotopy which are provided (after choosing a Riemannian metric) by classical Hodge theory.\\ \\ 
It is known that for every open TCFT there is an associated universal open-closed, and thus in particular a closed TCFT (\cite{Cos1}). A direct geometric construction of an open-closed TCFT on a manifold, following ideas different from what is presented here, was outlined in \cite{BluCohTel}. The resulting closed TCFT is expected to be closely related to the BV algebra of loop homology (\cite{Ge}).\\ \\ 
We finish the introduction by sketching the relationship of the construction of Theorem \ref{ThCohOp} to Floer homology and the study of pseudoholomorphic curves in cotangent bundles. For a smooth function $f$ on a manifold $M$, the graph $L_{df}$ of the differential $df$ is an exact Lagrangian submanifold of the total space of the cotangent bundle $T^{*}M$. Two graphs $L_{df_{1}}$ and $L_{df_{2}}$ intersect transversally if and only if the difference $f_{2}-f_{1}$ is a Morse function. In this case the Lagrangian Floer cohomology of the pair $L_{df_{1}},L_{df_{2}}$ is defined (\cite{Fl}). The differential of Lagrangian Floer cohomology is constructed by counting the elements of the zero-dimensional components of the moduli spaces of pseudoholomorphic strips in $T^{*}M$ with $L_{df_{1}}$ and $L_{df_{2}}$ as boundary conditions. \\ \\ Extending this idea, we can consider for any oriented surface $\Sigma$ as in Theorem \ref{ThCohOp} the space of all pseudoholomorphic maps from $\Sigma$ to $T^{*}M$ with Lagrangian boundary conditions of the form $L_{df}$, $f \in \mathcal{C}^{\infty}(M,\R)$ imposed on the components of the complement in $\partial \Sigma$ of the set of marked points. For example, in the case when there are no marked points, we consider the space $\mathcal{M}^{T^{*}M}_{\Sigma}$ of all pseudoholomorphic curves in $T^{*}M$ which map every boundary component of $\Sigma$ to a submanifold $L_{df}$ for some fixed $f \in \mathcal{C}^{\infty}(M,\R)$. The space $\mathcal{M}^{T^{*}M}_{\Sigma}$ is usually non-compact, but if it carries a fundamental class in Borel-Moore homology, then using the fact that the projection $\pi_{\Sigma}: \mathcal{M}^{T^{*}M}_{\Sigma} \rightarrow \mathcal{M}_{\Sigma}$ is proper (this is a consequnce of Gromov compactness) and rational Poincar\'e duality, one would obtain a cohomology class in $\mathcal{M}_{\Sigma}$. More generally, by considering surfaces with marked points on the boundary, we would be led to operations on Lagrangian Floer cohomology, analogous to the ones constructed in Theorem \ref{ThCohOp}. If $\Sigma$ is a disk, this could be made rigorous by the methods developed in \cite{FOOO}. The general case is, to the knowledge of the author, conjectural.\\ \\
A. Floer showed that for a suitable choice of almost complex structures on $T^{*}M$ and of a Riemannian metric on $M$, the chain complex of Lagrangian Floer homology of a pair $L_{df_{1}}, L_{df_{2}}$ is isomorphic to the Morse complex of $f_{2}-f_{1}$ (\cite{Fl}). The main idea is sometimes referred to as an 'adiabatic limit' argument: Floer demonstrated that after multiplying $f_{1}$ and $f_{2}$ by a sufficiently small number, there is a one-to-one correspondence between isolated pseudoholomorphic strips in $T^{*}M$ and isolated gradient flow trajectories in $M$. In \cite{FOh}, this idea was generalized to obtain an identification between spaces of pseudoholomorphic disks with arbitrary number of boundary marked points and spaces of flows of ribbon trees was obtained. These results suggest that the operations on the Lagrangian Floer homology of the zero section of the cotangent bundle, defined as outlined above from the study of psudoholomorphic curves in $T^{*}M$, should correspond to operations on the cohomology of $M$, obtained from spaces of flows of general ribbon graphs. Theorem \ref{ThCohOp} provides the construction of these latter operations.\\ \\
Throughout this paper, homology and cohomology with real coefficients is used and the coefficient ring is omitted from the notation. Theorem \ref{ThCohOp} continues to hold for coefficients in an arbitrary field of characteristic zero.  \\ \\
The author would like to thank Klaus Mohnke for many fruitful conversations.        
\section{Ribbon Graphs and Morse Theory}\label{SecRG}
In this Section we discuss the complex $C^{BM}_{*}(\mathcal{M}_{\Sigma})$ of locally finite geometric chains in $\mathcal{M}_{\Sigma}$ and use flow graphs in a manifold to construct elements of this complex.    
\subsection{Geometric Chains and Borel-Moore Homology}\label{SubSecConv}
The Borel-Moore homology, or homology with closed support, of a topological space $X$ is defined as the homology of the chain complex of locally finite singular chains in $X$, i. e. of linear combinations $\sum n_{\sigma} \sigma$ of singular simplices $\sigma: \Delta \rightarrow X$, so that for every compact subset $K \subset X$ there are only finitely many non-vanishing coefficients $n_{\sigma}$ with $\sigma(\Delta) \cap K \neq \emptyset$. Summing the top-dimensional simplices of a triangulation of a (not necessarily compact) oriented manifold $N^{d}$, one defines its fundamental class $[N] \in H^{BM}_{d}(N)$. The map $x \mapsto [N] \cap x$ yields a Poincar\'e duality isomorphism \begin{equation} H^{k}(N) \xrightarrow{\sim} H^{BM}_{d-k}(N). \label{EqBMPDIsoGeneral} \end{equation} While Borel-Moore homology is not functorial in general, there is a pushforward $f_{*}: H^{BM}_{*}(X) \rightarrow H^{BM}_{*}(Y)$ provided that $f: X \rightarrow Y$ is proper. If $f: X \rightarrow Y$ is continuous and on both $X$ and $Y$ there are Poincar\'e duality isomorphisms as in (\ref{EqBMPDIsoGeneral}), then $f^{*}: H^{*}(Y) \rightarrow H^{*}(X)$ yields a transfer homomorphism $f^{!BM}_{*}: H^{BM}_{*}(Y) \rightarrow H^{BM}_{*+d_{X}-d_{Y}}(X)$ in Borel-Moore homology.\\ \\
Similarly to the case of singular homology discussed in \cite{Ja}, Borel-Moore homology may be computed as the homology of a complex of geometric chains. To define this complex, we recall that a $d$-dimensional manifold with corners $P$ is a paracompact Hausdorff space locally modelled on the products $\R^{k}_{\geq 0} \times \R^{d-k}$, $0 \leq k \leq d$. A local boundary component $\beta$ at a point $x$ of $P$ consists of a choice, for a coordinate neighbourhood $U$ of $x$, of a connected component of the points of $U$ which lie on the codimension one stratum. With the convention \begin{equation} \partial P = \{(x,\beta): x \in P, \beta \text{ a local boundary component of $P$ at $x$}\} \label{EqBdryCorners}, \end{equation} the boundary of a manifold with corners is again a manifold with corners and there is a natural map $i_{\partial P}: \partial P \rightarrow P$.\\ \\
We denote by $C^{BM}_{*}(X)$ the vector space of all sums of the form $\sum_{P}n_{P}(P,f_{P})$, where $P$ is an oriented manifold with corners and $f_{P}: P \rightarrow X$ is a proper continuous map. As before, we require that the geometric chain $\sum_{P}n_{P}(P,f_{P})$ be locally finite: for every $K \subset \subset X$, there are only finitely many summands so that $n_{P}$ is non-zero and $f_{P}(P) \cap K \neq \emptyset$. We make the following two identifications. Firstly, if $P'$ is obtained from $P$ by reversing the orientation, then we identify $(P',f_{P'})$ with $-(P,f_{P})$. Secondly, if $P$ is the union $P=Q \cup_{X} R$ of two codimension zero submanifolds with corners $Q$ and $R$ along a (possibly empty) submanifold with corners $X \subset \partial P, \partial Q$,  then we identify $(P,f_{P})$ with the sum of $(Q,f_{P}|_{Q})$ and $(R,f_{P}|_{R})$. The degree of $(P,f_{P})$ is the dimension of $P$ and the boundary is defined by $\partial (P,f_{P}) = (\partial P,f_{P} \circ i_{\partial P})$.
\begin{proposition} \label{PropIdBMHom}
For any topological space $X$, the homology of the complex $C^{BM}_{*}(X)$ is isomorphic to the Borel-Moore homology of $X$.
\end{proposition}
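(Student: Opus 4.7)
The plan is to realize the Borel-Moore homology of $X$ as the homology of the complex $S^{lf}_*(X)$ of locally finite singular chains (introduced in the opening paragraph of this subsection) and then exhibit a chain-homotopy equivalence between $S^{lf}_*(X)$ and $C^{BM}_*(X)$. The forward map $\Phi \colon S^{lf}_*(X) \to C^{BM}_*(X)$ sends a singular simplex $\sigma \colon \Delta^n \to X$ to the pair $(\Delta^n, \sigma)$, with $\Delta^n$ regarded as an oriented $n$-manifold with corners in the standard way. Extending linearly, the map is well-defined on locally finite chains (the support conditions on source and target coincide) and is a chain map on the nose: the boundary of $\Delta^n$ as a manifold with corners in the sense of (\ref{EqBdryCorners}) decomposes, via the additivity relation built into $C^{BM}_*$, into its $n+1$ codimension-one faces, and the signs supplied by the induced boundary orientation reproduce the simplicial boundary $\sum_i (-1)^i \sigma \circ d^i$.

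To construct a homotopy inverse $\Psi \colon C^{BM}_*(X) \to S^{lf}_*(X)$, I would fix for each generator $(P, f_P)$ a smooth triangulation of $P$ compatible with the corner stratification (a classical construction going back to Whitehead, available for manifolds with corners) and set $\Psi(P, f_P) = \sum_i \epsilon_i \, (f_P|_{\Delta_i})$, where the sum ranges over the top-dimensional simplices $\Delta_i$ of the triangulation and $\epsilon_i = \pm 1$ compares the standard simplex orientation with the orientation of $P$. Properness of $f_P$ together with local finiteness of the triangulation guarantee that the result is a locally finite singular chain. The additivity relation $(P, f_P) = (Q, f_P|_Q) + (R, f_P|_R)$ imposed in the definition of $C^{BM}_*(X)$ immediately gives $\Phi \circ \Psi = \mathrm{id}$. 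Conversely, $\Psi \circ \Phi$ is the operation of subdividing each singular simplex into finer ones, and a prism-operator argument modelled on the standard proof that barycentric subdivision is chain-homotopic to the identity on singular chains supplies the required chain homotopy; the prisms $\Delta^n \times [0,1]$ used in its construction preserve local finiteness of their inputs.

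The main technical obstacle is the careful treatment of triangulations of manifolds with corners: one must establish existence of a smooth triangulation compatible with the corner stratification and well-definedness of $\Psi$ up to chain homotopy under change of triangulation, the latter amounting to comparing any two such triangulations via a common subdivision and constructing the corresponding prism homotopies while respecting the corner structure. A secondary bookkeeping issue, present at every stage, is to verify that the constructions preserve local finiteness; in each instance this reduces to the observation that proper maps pull compact sets back to compact sets and that the triangulations and subdivisions involved may be chosen to be locally finite themselves.
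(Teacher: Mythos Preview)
Your approach coincides with the paper's in its essentials: both construct the same pair of maps---sending a singular simplex $\sigma$ to the geometric chain $(\Delta,\sigma)$, and sending a geometric chain $(P,f_P)$ to the locally finite singular chain obtained by triangulating $P$ and summing $f_P$ over the top-dimensional simplices. The paper, however, defines these maps directly on homology classes rather than attempting a chain-level equivalence: it verifies that the triangulation map is well-defined on homology (independent of the triangulation chosen, sends closed to closed and exact to exact) and then checks that the two composites are the identity on homology, declaring this ``immediate.'' Your chain-level route is more ambitious and obliges you to (i) show $\Psi$ is well-defined on the \emph{quotient} $C^{BM}_*(X)$, at least up to chain homotopy, since the choice of triangulation must be compatible with the additivity relation, and (ii) build the prism homotopies explicitly. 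One small caveat: your claim that $\Phi\circ\Psi=\mathrm{id}$ holds \emph{exactly} via additivity is delicate when $P$ is non-compact, since an infinite triangulation is not literally a finite iterate of the two-piece relation; either the relations must be closed under locally finite sums, or one retreats to the homology-level statement. The paper's argument is shorter; yours, once the bookkeeping is completed, yields a stronger conclusion.
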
   
\begin{proof}
Denote by $h^{BM}_{*}(X)$ the homology of the complex $C^{BM}_{*}(X)$ and by $H^{BM}_{*}(X)$ the Borel-Moore homology of $X$. By definition, the latter is given by equivalence classes of closed locally finite chains $\sum_{\sigma} n_{\sigma} \sigma$, where $\sigma: \Delta \rightarrow X$ is a singular simplex. A map $\psi: H^{BM}_{*}(X) \rightarrow h^{BM}_{*}(X)$ is defined by $[\sum_{\sigma} n_{\sigma} \sigma] \mapsto [\sum_{\sigma}n_{\sigma}(\Delta,\sigma)]$. Conversely, given a generator $(P,f_{P})$ of $C^{BM}_{*}(X)$, choose a triangulation of $P$ which induces a triangulation of $\partial P$ and so that if two components of $\partial P$ are diffeomorphic, then the induced triangulations coincide. Since $f$ is proper, the sum $\sum_{i}f_{P}|_{\Delta_{i}}=:\sum_{i}\sigma_{i}$ over the top-dimensional simplices of the triangulation is a locally finite singular chain in $X$; it is closed (resp. exact) if $(P,f_{P})$ is closed (resp. exact) and its class in $H^{BM}_{*}(X)$ is independent of the choice of the triangulation. Define $\phi: h^{BM}_{*}(X) \rightarrow H^{BM}_{*}(X)$ by $[P,f_{P}] \mapsto [\sum_{i}\sigma_{i}]$. It is immidiate to check that $\phi$ and $\psi$ are inverse to each other.     
\end{proof} 
In Section \ref{SubSecGlue}, we will use the fact that in two simple special cases the transfer homomorphism $f^{!BM}_{*}: H^{BM}_{*}(Y) \rightarrow H^{BM}_{*}(X)$ corresponding to a continuous map $f: X \rightarrow Y$ can be identified as the homomorphism induced by an explicit chain map $f^{!CBM}_{*}: C^{BM}_{*}(Y) \rightarrow C^{BM}_{*}(X)$. Firstly, if $f$ is the inclusion map $i: X \hookrightarrow Y$ of an open subset $X \subset Y$, then the image under $i^{!CBM}_{*}$ of a generator $(P,f_{P})$ of $C^{BM}_{*}(Y)$ is given by
$(f^{-1}_{P}(X),f_{P}|_{f^{-1}_{P}(X)})$. Secondly, if $f$ is the projection $\pi: X \rightarrow Y$ of a locally trivial fibre bundle, then the image of a generator $(P,f_{P})$ of $C^{BM}_{*}(Y)$ under $\pi^{!CBM}_{*}$ is given by $(f^{*}_{P}X,\pi^{*}f_{P})$, where $f^{*}_{P}X \rightarrow P$ is the pullback bundle and 
$\pi^{*}f_{P}$ the map which makes the following diagram commutative:\\
\begin{xy} \hspace{150pt} \xymatrix @C=0.45in{f^{*}_{P} X \ar[d]^/0em/{} \ar[r]^/0.2em/{\pi^{*}f_{P}} & X \ar[d]^/-0.2em/{\pi} \\ 
  P \ar[r]^/0em/{f_{P}} & Y. 
} \end{xy} \\ \\ \\
We will also consider the complex $C^{BM}_{*}$ with coefficients in a local system. In this context a generator is a pair $(P,f_{P})$ as before, together with a section of the pullback of the local system under $f_{P}$. If the local system is graded, then the degree of the chain is given as the sum of the dimension of $P$ and the degree of the section. There is a straightforward analogue of Proposition \ref{PropIdBMHom} for homology with local coefficients.\\ \\ 
As was mentioned above, on any oriented manifold there is a Poincar\'e duality isomorphism $H^{BM}_{*}(M) \simeq H^{ \text{dim }M-*}(M)$. Over a field of characteristic zero, this is also true for the space $\mathcal{M}_{\Sigma}$ of complex structures. To explain this fact, we recall that by the ribbon graph decomposition of Riemann surfaces (\cite{Ha},\cite{Pe}, see also \cite{Cos2}), there is a homeomorphism
\begin{equation} \mathcal{M}_{\Sigma} \simeq \bigcup \limits_{\Gamma} Met_{0}(\Gamma) / \sim. \label{EqRibbonGrDec} \end{equation}
The union on the right-hand side of (\ref{EqRibbonGrDec}) is over all ribbon graphs $\Gamma$ whose associated oriented surface is $\Sigma$ and so that every internal vertex of the graph has valency at least three; the univalent vertices of $\Gamma$ correspond to the marked points on $\partial \Sigma$. The symbol $Met_{0}(\Gamma)$ denotes the space of metric structures on $\Gamma$, where we allow edges of length zero, subject to the condition that the sum of the lengths of the edges in any cycle remain positive. The equivalence relation is generated by the following two identifications. Firstly, we identify a metric structure on a ribbon graph $\Gamma$ with each metric structure obtained as the pullback under a ribbon graph automorphism $\Gamma \rightarrow \Gamma$ which fixes each univalent vertex. Secondly, we identify two metric ribbon graphs if one is obtained from the other by collapsing internal edges of zero length. \\ \\
As a simple example, consider the case of a disk with four marked points on the boundary (for simplicity, the real-valued labellings of the marked points are omitted). 
\begin{figure}[!htbp]
\centering	\floatbox[{\capbeside\thisfloatsetup{capbesideposition={left,center},capbesidewidth=3.5cm}}]{figure}[\FBwidth]{\caption{The ribbon graph decomposition of the moduli space of holomorphic disks with four marked points on the boundary.}	\label{Fig2}} {\includegraphics[width=0.55\textwidth]{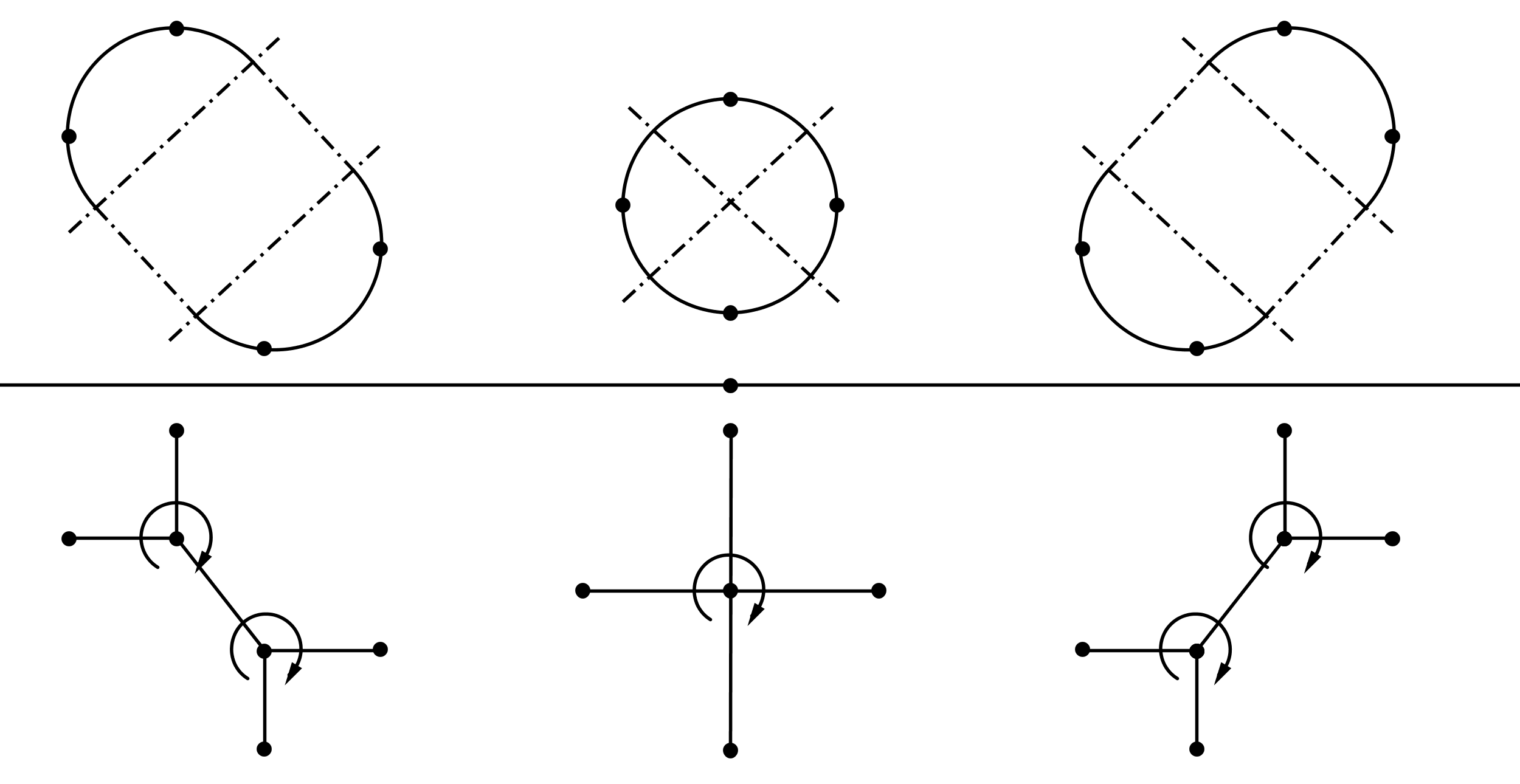}}
\end{figure} 
The moduli space is a real line which is identified as the result of gluing two copies of $\mathbb{R}_{\geq 0}$ to a point. The homeomorphism (\ref{EqRibbonGrDec}) means that each element of the moduli space is obtained uniquely from the fully symmetric configuration by slicing open along one of the two axes of symmetry which do not contain marked points, and then gluing in a holomorphic strip. \\ \\ 
It follows from (\ref{EqRibbonGrDec}) that $\mathcal{M}_{\Sigma}$ is an orbifold which admits a good cover in the sense of \cite{AdLeRu}, page 35. Thus over a field of characteristic zero, there is an isomorphism \begin{equation} H^{*}_{c}(\mathcal{M}_{\Sigma};or) \simeq H_{\text{dim } \mathcal{M}_{\Sigma}-*}(\mathcal{M}_{\Sigma}), \label{EqPDCsupp} \end{equation} where $H^{*}_{c}(\mathcal{M}_{\Sigma};or)$ is the cohomology with compact support and coefficients in the orientation sheaf.  By the universal coefficient theorem, $H^{*}_{c}(\mathcal{M}_{\Sigma};or)$ and $H_{*}(\mathcal{M}_{\Sigma})$ are the dual vector spaces to respectively $H^{BM}_{*}(\mathcal{M}_{\Sigma};or)$ and $H^{*}(\mathcal{M}_{\Sigma})$. We conclude the isomorphism \begin{equation} H^{BM}_{*}(\mathcal{M}_{\Sigma};or) \simeq H^{\text{dim } \mathcal{M}_{\Sigma}-*}(\mathcal{M}_{\Sigma}) \label{EqPDBM}.\end{equation}
\subsection{Flow Graphs in a Manifold}\label{SubSecRGFlows}  
We now introduce spaces of flow graphs in a manifold and use them to define elements of $C^{BM}_{*}(\mathcal{M}_{\Sigma})$. The discussion of transversality and the construction of natural compactifications of the spaces follows a similar line of argument as in the classical setting of Morse theory (\cite{Sch}).\\ \\
Let $\Gamma$ be a ribbon graph which appears on the right-hand side of the ribbon graph decomposition (\ref{EqRibbonGrDec}). Suppose that to every boundary marked point on $\partial \Sigma$ (or, equivalently, to every external edge $e$ of $\Gamma$), a critical point $p_{e}$ of the Morse function $f$ is associated. The partition of the marked points into incoming and outgoing points defines a partition ${\bf p}=({\bf p}_{+},{\bf p}_{-})$ of the tuple ${\bf p}$ of the critical points $p_{e}$. We will associate to this data a Banach manifold $\mathcal{B}_{\Gamma}({\bf p}_{+},{\bf p}_{-})$. We first introduce parametrizations of the edges of $\Gamma$. \\ \\ 
Let $e$ be an external edge of $\Gamma$ and $v$ be the external vertex incident to $e$. If $v$ is marked as incoming, then we fix a homeomorphism $\psi_{e}: (-\infty,0] \xrightarrow{\sim} e-\{v\}$. In the case when $v$ is marked as outgoing, we fix $\psi_{e}$ as a homeomorphism $\psi_{e}: [0,\infty) \xrightarrow{\sim} e-\{v\}$. In each case we orient $e$ by prescribing $\psi_{e}$ to be orientation-preserving. To choose parametrizations of the internal edges of $\Gamma$, we must take into account the fact that the choice of orientations of these edges is non-canonical. 
\begin{definition} We denote by $\Gamma' \rightarrow \Gamma$ the finite cover with fibre given by all choices of orientations of all the internal edges of $\Gamma$. 
\end{definition}
Thus $\Gamma'$ is a graph whose points are given by pairs consisting of a point of $\Gamma$ together with a choice of orientations of all the internal edges of $\Gamma$. Every edge of $\Gamma'$ carries a natural orientation and the projection $\pi_{\Gamma}: \Gamma' \rightarrow \Gamma$ preserves the orientations of the external edges. The group $T$ of covering transformations of $\Gamma' \rightarrow \Gamma$ is generated by the involutions $\tau_{e}$ given by reversing the orientation of an internal edge $e$ of $\Gamma$. \\ \\
We fix for every internal edge $e'$ of $\Gamma'$ a continuous map $\psi_{e'}: [0,1] \rightarrow e'$ whose restriction to $(0,1)$ is a homeomorphism and which induces the natural orientation of $e'$. If $e'$ is an incoming internal edge of $\Gamma'$ which is mapped to $e$ under $\pi_{\Gamma}$, then a map $\psi_{e'}: (-\infty,0] \rightarrow e'$ is uniquely defined by requiring that the composition of $\pi_{\Gamma} \circ \psi_{e'}$ coincide with $\psi_{e}$. We define parametrizations $\psi_{e'}: [0,\infty) \rightarrow e'$ of the outgoing edges of $\Gamma'$ analogously.\\ \\
If $e'$ is an incoming external edge of $\Gamma'$ and $p$ the associated critical point of $f$, then we denote by $H_{e'}$ the space of all elemenets $\gamma \in W^{1,2}_{loc}((-\infty,0],M)$ so that there exist $T > 0$ and $\xi \in W^{1,2}([T,\infty),T_{p}M)$ with $\gamma(t)=exp_{p}(\xi(t))$ for all $t \geq T$. Here $exp_{p}$ denotes the exponential map at $p$, defined in a neighbourhood of the origin of $T_{p}M$. In the case when $e'$ is outgoing, we define $H_{e'}$ analogously, but with $(-\infty,T]$ replaced by $[T,\infty)$. For an internal edge $e'$ of $\Gamma'$, we denote $H_{e'}=W^{1,2}([0,1],M)$.
\begin{definition} \label{DefB_Gamma}
We define $\mathcal{B}_{\Gamma}({\bf p}_{+},{\bf p}_{-})$ as the space of contunuous maps $\boldsymbol{\gamma}: \Gamma' \rightarrow M$, so that for every edge $e'$ of $\Gamma'$, the composition $\gamma_{e'}=\boldsymbol{\gamma} \circ \psi_{e'}$ is an element of $H_{e'}$ and moreover for any pair of edges $e',e''$ of $\Gamma'$, where $e'$ is an internal edge,
\begin{equation} \gamma_{\tau_{e'}(e'')}(t)=\begin{cases} \gamma_{e''}(t) & \text{ if }e' \neq e'',\\ \gamma_{e''}(1-t) & \text{ if }e'=e''. \label{EqB_Gamma} \end{cases} \end{equation}
\end{definition}
The space $\mathcal{B}_{\Gamma}({\bf p}_{+},{\bf p}_{-})$ is a Banach manifold. The tangent space $T_{\boldsymbol{\gamma}}\mathcal{B}_{\Gamma}({\bf p}_{+},{\bf p}_{-})$ at a point $\boldsymbol{\gamma}$ is the closed subspace of  $\otimes_{e' \in E(\Gamma')} W^{1,2}(\gamma^{*}_{e'}TM)$ consisting of all the elements ${\bf s}=(s_{e'})_{e' \in E(\Gamma')}$ which define a continuous section of $\boldsymbol{\gamma}^{*}TM$ and so that for each pair $e',e''$ of edges of $\Gamma'$, where $e'$ is an internal edge,  \begin{equation} s_{\tau_{e'}(e'')}(t)=\begin{cases} \;\; \; s_{e''}(t) & \text{ if }e' \neq e'',\\ -s_{e''}(1-t) & \text{ if }e'=e''. \end{cases} \label{EqTB_Gamma} \end{equation} Here $W^{1,2}(\gamma^{*}_{e}TM)$ denotes the space of sections of class $W^{1,2}$ and $E(\Gamma')$ is the set of edges of $\Gamma'$. \\ \\
 We define a Banach bundle $\mathcal{E} \rightarrow Met(\Gamma) \times \mathcal{B}_{\Gamma}({\bf p}_{+},{\bf p}_{-})$ as the pullback of $T \mathcal{B}_{\Gamma}({\bf p}_{+},{\bf p}_{-})$ under the projection $Met(\Gamma) \times \mathcal{B}_{\Gamma}({\bf p}_{+},{\bf p}_{-}) \rightarrow \mathcal{B}_{\Gamma}({\bf p}_{+},{\bf p}_{-})$ to the second factor. There is a well-defined section \begin{equation} {\bf S}=(s_{e'})_{e' \in E(\Gamma')} \in L^{2}(\mathcal{E}) \label{EqSectionS} \end{equation} determined by the condition that  \begin{equation} s_{e'}(\boldsymbol{\gamma})=\frac{d}{dt}\gamma_{e'}(t)\label{EqDefSectionS} \end{equation} for every edge $e'$ of $\Gamma'$.\\ \\
 We want to consider maps in $\mathcal{B}_{\Gamma}({\bf p}_{+},{\bf p}_{-})$, so that the restriction of the map to every edge is a piece of a trajectory of the flow of a given one-parameter family of vector fields. We now introduce the setup for the construction of these vector field data.\\ \\   
We choose for each edge $e'$ of $\Gamma'$ a one-parameter family of vector fields on $M$.  Formally, consider the vector bundle $E \rightarrow Met_{0}(\Gamma) \times [0,1] \times M$ given as the pullback of $TM$ under the projection to the last factor. Let $X_{\Gamma}$ denote the space of all sections of class $W^{1,2}$ of $E$. 
\begin{definition} \label{DefPertData}
We define $\mathcal{X}_{\Gamma}$ to be the space of all elements \begin{equation} \mathbf{x}=(x_{e'})_{e' \in E(\Gamma')} \in X_{\Gamma}^{\otimes {|E(\Gamma')|}} \end{equation} which satisfy the following conditions:
\begin{enumerate}
\item For any two edges $e',e''$ of $\Gamma'$, where $e'$ is an internal edge, and each $t\in [0,1]$,
\begin{equation} x_{\tau_{e'}(e'')}(\cdot, t,\cdot)=\begin{cases} \; \; \; x_{e''}(\cdot,t,\cdot) & \text{ if }e' \neq e'',\\ -x_{e''}(\cdot,1-t,\cdot) & \text{ if }e'=e''.
\end{cases} \end{equation}
\item There is a constant $C>0$, so that for every edge $e'$ of $\Gamma'$, the estimate
\begin{equation}\| x_{e'}(\textbf{\textit l}, t, \cdot) \|_{W^{1,2}(TM)}<C \label{EqCEstimate} \end{equation} holds true for all $(\textbf{\textit l},t) \in Met_{0}(\Gamma) \times [0,1]$.
\end{enumerate}
\end{definition} 
The first condition will be used to associate to every element of $\mathcal{X}_{\Gamma}$ a well-defined section of $\mathcal{E}$. The second condition is essential for the construction of compactifications of the spaces of flow graphs.\\ \\ 
To every element $\mathbf{x} \in \mathcal{X}_{\Gamma}$, we associate a section $\mathcal{F}_{\mathbf{x}}=(F_{e'})_{e' \in E(\Gamma')}$ of $\mathcal{E}$ as follows. Let $\sigma: \mathbb{R} \rightarrow \mathbb{R}$ be a smooth function with $\sigma(t)=1$ for $|t| \leq 1$ and $\sigma(t)=0$ for $|t| \geq 2$. If $e'$ is an external edge of $\Gamma'$, then \begin{equation} F_{e'}({\textbf{\textit l}},{\bf \gamma})(t)=\nabla_{g}f(\gamma_{e'}(t))+\sigma(t)x_{e'}(\textbf{\textit l},|t|,\gamma_{e'}(t)). \label{EqFExt} \end{equation} If $e'$ is an internal edge of $\Gamma'$ which is mapped to $e \in E(\Gamma)$ under the projection $\Gamma' \rightarrow \Gamma$, then we denote \begin{equation} F_{e'}({\textbf{\textit l}},{\bf \gamma})(t)=l_{e}x_{e'}(\textbf{\textit l},t,\gamma_{e'}(t)), \label{EqFInt} \end{equation}
where $l_{e}$ is the length of $e$ in the metric structure ${\textbf{\textit l}}$.\\ \\ 
With this notation in place, we can now define the spaces of flow graphs.
\begin{definition} \label{Def2}
For ${\bf x} \in \mathcal{X}_{\Gamma}$, let ${\bf S}_{{\bf x}} \in L^{2}(\mathcal{E})$ denote the section given as the difference \begin{equation} {\bf S}_{{\bf x}}={\bf S}-\mathcal{F}_{\mathbf{x}}, \label{EqSec1} \end{equation}
where ${\bf S}$ is defined as in (\ref{EqDefSectionS}). We define the space \begin{equation} \mathcal{M}_{\Gamma,{\bf x}}({\bf p}_{+},{\bf p}_{-}) \subset Met(\Gamma) \times \mathcal{B}_{\Gamma}({\bf p}_{+},{\bf p}_{-}) \label{EqDefGrFlow} \end{equation} of flows over $\Gamma$ subject to the vector field datum $\mathbf{x}$ as the zero locus of ${\bf S}_{{\bf x}}$.
 \end{definition}
Definition \ref{Def2} associates to each graph $\Gamma$ which appears in the ribbon graph decomposition (\ref{EqRibbonGrDec}) of $\mathcal{M}_{\Sigma}$ a corresponding space $\mathcal{M}_{\Gamma,{\bf x}}({\bf p}_{+},{\bf p}_{-})$ of flow graphs in a manifold. This is illustrated by the following simple example.  
 \begin{example} \label{Ex1}
 Consider the case when $\Sigma$ is an annulus with two marked points on the same boundary component. 
 \end{example}  
In this example, the moduli space $\mathcal{M}_{\Sigma}$ is homeomorphic to an open disk (for simplicity, we omit the real-valued labels at the marked points). There are five distinct isomorphism classes of ribbon graphs which appear on the right-hand side of (\ref{EqRibbonGrDec}). Three two-dimensional cells corresponding to the three ribbon graphs with two internal vertices of valency three are glued together along two one-dimensaion cells which correspond to the two graphs with a single internal vertex of valency four. There are no non-trivial automorphisms. The ribbon graph decomposition and the spaces of flow graphs associated to the cells are illustrated in Figure \ref{Fig3}. The shaded parts of the graphs indicate the vector field data.
\begin{figure}[!htbp]
\centering	\floatbox[{\capbeside\thisfloatsetup{capbesideposition={left,center},capbesidewidth=3cm}}]{figure}[\FBwidth]{\caption{Flow graphs corresponding to an annulus with two marked points on the same boundary component.}	\label{Fig3}} {\includegraphics[width=0.6\textwidth]{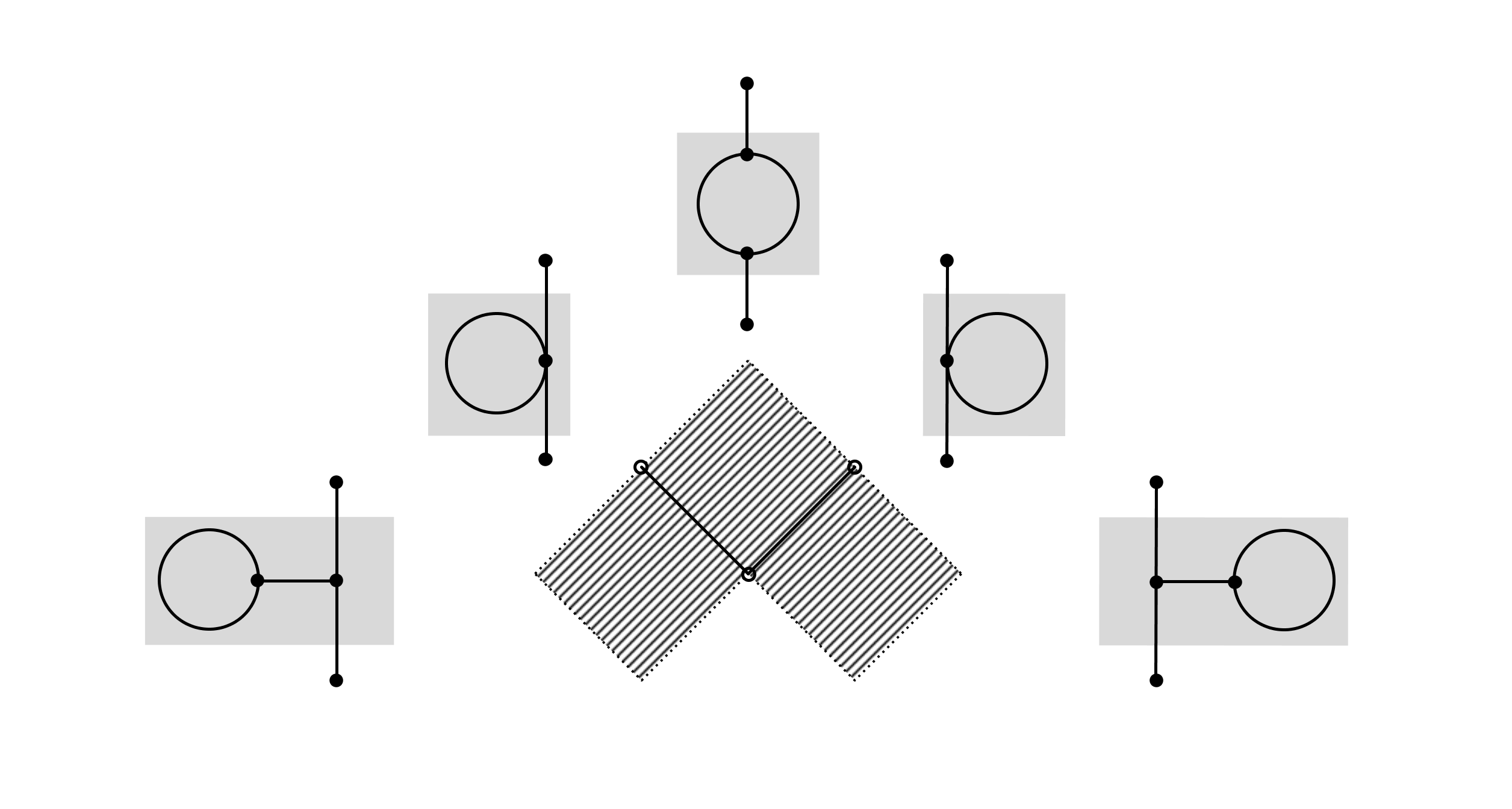}}
\end{figure} \\ \\
The next Proposition summarizes the arguments used to equip the space $\mathcal{M}_{\Gamma,{\bf x}}({\bf p}_{+},{\bf p}_{-})$ with the structure of a manifold.
\begin{proposition}\label{PropGener}
\begin{enumerate}
\item For every $\mathbf{x} \in \mathcal{X}_{\Gamma}$, the map \begin{equation} {\bf S}_{\mathbf{x}}: Met(\Gamma) \times \mathcal{B}_{\Gamma}({\bf p}_{+},{\bf p}_{-}) \rightarrow \mathcal{E} \label{EqMapSxFredh} \end{equation} is Fredholm of index
\begin{equation} ind({\bf S}_{\mathbf{x}})=|{\bf p}_{-}|-|{\bf p}_{+}| +d\chi(\Sigma)-d n_{-} +|E(\Gamma)|.\label{EqIndex} \end{equation}
Here $|{\bf p}_{+}|$ and $|{\bf p}_{-}|$ denote the sum of all the Morse indices of the critical points corresponding to the incoming and to the outgoing marked points respectively. The symbol $|E(\Gamma)|$ stands for the number of edges of $\Gamma$.
\item There is a subset $\mathcal{X}_{\Gamma, reg} \subset \mathcal{X}_{\Gamma}$ of second category so that for each $\mathbf{x} \in \mathcal{X}_{\Gamma, reg}$, ${\bf S}_{\mathbf{x}}$ is transverse to the zero section of $\mathcal{E}$.
\item Suppose that for every graph $\widetilde{\Gamma}$ obtained from $\Gamma$ by collapsing internal edges (where as before no cycle is collapsed), an element ${\bf x}_{\widetilde{\Gamma}} \in \mathcal{X}_{\widetilde{\Gamma}, reg}$ is fixed. Then there exists a subset of second category of $\mathcal{X}_{\Gamma}$, so that for every element ${\bf x}$ of that subset, the conclusion of 2. holds true and, in addition, the restriction of ${\bf x}$ to $Met_{0}(\widetilde{\Gamma}) \subset Met_{0}(\Gamma)$ coincides with ${\bf x}_{\widetilde{\Gamma}}$.
\end{enumerate}
\end{proposition}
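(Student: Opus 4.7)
The plan is to treat the three assertions sequentially, building the extension in part (3) on top of the absolute transversality in part (2), after first settling the index computation.

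For part (1), I would linearize ${\bf S}_{\mathbf{x}}$ at a zero $(\textbf{\textit l}, \boldsymbol{\gamma})$. Because matching at internal vertices is already built into $\mathcal{B}_{\Gamma}({\bf p}_{+}, {\bf p}_{-})$ through continuity of $\boldsymbol{\gamma}$ and the $\tau_{e'}$-symmetry, the linearization splits, modulo the finite-dimensional $Met(\Gamma)$-factor contributing $|E(\Gamma)|$ to the index, as a direct sum of edge-wise first-order operators on the edges of $\Gamma'$, coupled by a codimension-$(k_v - 1)d$ diagonal constraint at each internal vertex $v$ of valency $k_v$. On an external edge the operator is an asymptotically hyperbolic first-order ODE operator on a half-line with Hessian at the limiting critical point as asymptotic operator; standard Morse-theoretic spectral flow gives index $d - \text{ind}(p)$ on an incoming edge and $\text{ind}(q)$ on an outgoing edge (cf.~\cite{Sch}). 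On an internal edge, the operator on $[0,1]$ is surjective with $d$-dimensional kernel. Summing edge contributions, subtracting the matching codimensions via $\sum_{v \in V_{int}}(k_v - 1) = 2|E_{int}| + n_+ + n_- - V_{int}$, and using $\chi(\Sigma) = \chi(\Gamma) = V_{int} + n_+ + n_- - |E(\Gamma)|$, the formula (\ref{EqIndex}) drops out after straightforward bookkeeping.

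For part (2), I would apply Sard--Smale to the projection onto $\mathcal{X}_{\Gamma}$ of the universal moduli space
\[
\mathcal{M}^{univ} = \bigl\{(\mathbf{x}, \textbf{\textit l}, \boldsymbol{\gamma}) \in \mathcal{X}_{\Gamma} \times Met(\Gamma) \times \mathcal{B}_{\Gamma}({\bf p}_{+}, {\bf p}_{-}) : {\bf S}_{\mathbf{x}}(\textbf{\textit l}, \boldsymbol{\gamma}) = 0 \bigr\}.
\]
The crucial step is to show that the full linearization, including the $\mathbf{x}$-direction, is surjective at every zero. Suppose $\eta \in L^{2}(\mathcal{E})$ is $L^{2}$-orthogonal to the image. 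Since $\mathbf{x}$-variations enter ${\bf S}_{\mathbf{x}}$ on an internal edge $e'$ as $l_e\, \delta x_{e'}(\textbf{\textit l}, t, \gamma_{e'}(t))$ by (\ref{EqFInt}) and on an external edge as $\sigma(t)\, \delta x_{e'}(\textbf{\textit l}, |t|, \gamma_{e'}(t))$ by (\ref{EqFExt}), test perturbations $\delta x_{e'}$ supported near a point $(\textbf{\textit l}_0, t_0, \gamma_{e'}(t_0))$ where $\gamma_{e'}$ is regularly embedded (and, externally, $|t_0| < 2$) force $\eta_{e'}$ to vanish in a neighbourhood of $t_0$; the symmetry constraint in Definition \ref{DefPertData} does not obstruct this, since we may freely pick a representative in each $T$-orbit of edges of $\Gamma'$. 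Unique continuation for the formal adjoint of the edge-wise operator then propagates $\eta_{e'} \equiv 0$ along the entire edge, and iterating over the edges (compatibility at internal vertices following from continuity) yields $\eta = 0$. Sard--Smale now produces the desired residual set $\mathcal{X}_{\Gamma, reg} \subset \mathcal{X}_{\Gamma}$.

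For part (3), I would repeat this argument on the affine subspace $\mathcal{X}_{\Gamma}^{fix} \subset \mathcal{X}_{\Gamma}$ consisting of elements whose restriction to $Met_{0}(\widetilde\Gamma)$ coincides with the prescribed $\mathbf{x}_{\widetilde\Gamma}$ for every graph $\widetilde\Gamma$ obtained by collapsing internal edges. The tangent space to this affine subspace consists of admissible variations vanishing on the union of boundary strata. Multiplying the localized test perturbations used in part (2) by a cutoff function on $Met_{0}(\Gamma)$ that vanishes on all boundary strata but is positive at a chosen interior basepoint $\textbf{\textit l}_0 \in Met(\Gamma)$ yields admissible variations with the same effect, so the surjectivity argument still goes through and Sard--Smale furnishes a second-category set of admissible extensions. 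The main obstacle I anticipate is precisely this surjectivity of the universal linearization: one must simultaneously respect the covering symmetry imposed by $\Gamma' \to \Gamma$, accommodate the restricted support forced on external edges by the cutoff $\sigma$, and, in part (3), ensure that the interior perturbations vanish on all boundary strata without sacrificing the freedom needed to eliminate cokernel elements.
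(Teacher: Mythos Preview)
Your proposal is correct and follows essentially the same strategy as the paper: both establish Fredholmness edge-by-edge via the first-order ODE operators, then prove surjectivity of the universal linearization by pairing a putative cokernel element against localized $\mathbf{x}$-variations (respecting the $\Gamma' \to \Gamma$ symmetry), and finally invoke Sard--Smale. Two minor remarks: since the perturbations in $\mathcal{X}_\Gamma$ depend explicitly on $t \in [0,1]$, no ``regularly embedded'' hypothesis on $\gamma_{e'}$ is needed to localize; and your appeal to unique continuation for the adjoint, while redundant on internal edges (every $t_0 \in (0,1)$ can be hit directly), is exactly what is needed on external edges to propagate vanishing from the region $\{|t| < 2\}$ where $\sigma$ is supported to the full half-line---a point the paper leaves implicit.
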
 
\begin{proof}
The arguments are analogous to the classical case of spaces of flow trajectories and we will thus stay brief. Denote by $D {\bf S}_{x}$ the linearization of ${\bf S}_{x}$. For fixed ${\textbf{\textit l}} \in Met(\Gamma)$ and ${\bf s} \in T_{{\bf \gamma}}\mathcal{B}_{\Gamma}({\bf p}_{+},{\bf p}_{-})$, we can write $D {\bf S}_{x}(0,{\bf s})$ in the form 
\begin{equation}(D {\bf S}_{x}(0,{\bf s}))_{e'}(t)= \frac{d}{dt}s_{e'}(t)-A(t)s_{e'}(t),\label{EqExprLin} \end{equation}
where $A(t) \in End(T_{\gamma_{e'}(t)}M)$ are endomorphisms such that if $e'$ is an external edge with $\gamma_{e'}(t) \rightarrow p \in Crit(f)$ for $|t| \rightarrow \infty$, then $A(t) \rightarrow Hess_{f}(p)$. Using the non-degeneracy of the Hessian, one concludes from (\ref{EqExprLin}) the inequality
\begin{equation}\|s_{e'}\|_{W^{1,2}} \leq c (\|s_{e'}\|_{L^{2}}+\|(D {\bf S}_{x}(0,{\bf s}))_{e'}\|_{L^{2}}) \label{EqLinEst} \end{equation}
for a positive constant $c$. Using the compactness of the embedding $W^{1,2}(T_{\bf \gamma}\mathcal{B}_{\Gamma}({\bf p}_{+},{\bf p}_{-})) \hookrightarrow L^{2}(T_{\bf \gamma}\mathcal{B}_{\Gamma}({\bf p}_{+},{\bf p}_{-}))$, it follows from (\ref{EqLinEst}) that the map $(D {\bf S}_{x})_{2}: {\bf s} \mapsto D {\bf S}_{x}(0,{\bf s})$ has finite-dimensional kernel and closed image. A standard computation using partial integration shows that each element ${\bf r} \in L^{2}(\mathcal{E})$ so that $\langle {\bf r}, D {\bf S}_{x}(0,{\bf s}) \rangle=0$ for all ${\bf s} \in T_{{\bf \gamma}}\mathcal{B}_{\Gamma}({\bf p}_{+},{\bf p}_{-})$ is weakly differentiable and satisfies \begin{equation} \frac{d}{dt}r_{e'}(t)+A^{T}(t)r_{e'}(t)=0 \label{EqCokernel}. \end{equation}
Together with the Sobolev embedding $W^{1,2}_{loc}(\R,M) \hookrightarrow C^{0}(\R,M)$ and using uniqueness of solutions of an ordinary differential equation, it follows that the cokernel of $(D {\bf S}_{x})_{2}$ is finite-dimensional and thus $(D {\bf S}_{x})_{2}$ is Fredholm. Since $Met(\Gamma)$ is finite-dimensional, we conclude that $D {\bf S}_{x}$ is Fredholm aswell. The index formula (\ref{EqIndex}) is straightforward.\\ \\
To prove the second part of the Proposition, we consider the map \begin{equation} \mathcal{S}_{\mathcal{X}}: \mathcal{X}_{\Gamma} \times Met(\Gamma) \times \mathcal{B}_{\Gamma}({\bf p}_{+},{\bf p}_{-}) \rightarrow \mathcal{E}, \\ ({\bf x},\textbf{\textit l},{\bf \gamma}) \mapsto \mathcal{S}_{\bf x}(\textbf{\textit l},{\bf \gamma}). \label{EqMaapSX} \end{equation} It suffices to show that $\mathcal{S}_{\mathcal{X}}$ is transverse to the zero section of $\mathcal{E}$. To this end, we must check that if $\mathcal{S}_{\mathcal{X}}(\mathbf{x},{\textbf{\textit l}},{\bf \gamma} )=0$, then every element ${\bf r} \in L^{2}(\mathcal{E})$, so that $\langle {\bf r},D \mathcal{S}_{\mathcal{X}}({\bf y},{\bf m},{\bf s}) \rangle=0$ for all $({\bf y},{\bf m},{\bf s}) \in \mathcal{X}_{\Gamma} \oplus T_{{\textbf{\textit l}}}Met(\Gamma) \oplus T_{\bf \gamma} \mathcal{B}_{\Gamma}({\bf p}_{+},{\bf p}_{-})$, vanishes identically. By the proof of the first part of the Proposition, each component $r_{e'}$, $e' \in E(\Gamma')$, is continuous. Thus it suffices to show that $r_{e'}(t)=0$ for $t \in (0,1)$. For simplicity, we will only carry this out in the case when $e'$ is an internal edge, the case when $e'$ is an external edge being similar. \\ \\
From (\ref{EqFInt}), we have \begin{equation} (D \mathcal{S}_{\mathcal{X}}({\bf y},0,0))_{e'}(t)=y_{e'}(t,\gamma_{e'}(t)). \label{EqCompDS} \end{equation} Suppose that $r_{e'}(t_{0}) \neq 0$, $t_{0} \in (0,1)$. Then there exist $0<\varepsilon < \min (t_{0},1-t_{0})$ and $y_{e'} \in X_{\Gamma}$, so that  \begin{equation} \langle r_{e'}(t),y_{e'}(\textbf{\textit l},t,\gamma_{e'}(t))\rangle_{g}>0 \text{ for }|t-t_{0}| < \varepsilon \label{EqCondy1} \end{equation} and
\begin{equation}  y_{e'}(\textbf{\textit l},t,\cdot) \equiv 0 \text{ for } |t-t_{0}| \geq \varepsilon. \label{EqCondy2} \end{equation} Using $y_{e'}$, we define ${\bf y} \in \mathcal{X}_{\Gamma}$ as follows. Let $e$ be the edge of $\Gamma$ corresponding to $e'$ under
 the projection $\Gamma' \rightarrow \Gamma$. If $e''$ is an edge of $\Gamma'$ obtained from $e'$ by a covering transformation of $\Gamma' \rightarrow \Gamma$ which preserves the orientation of $e$, then we define $y_{\tau_{e''}(e')}({\textbf{\textit l}},t,\cdot)=y_{e'}({\textbf{\textit l}},t,\cdot)$. In the case when $e''$ is obtained from $e'$ by applying a covering transformation which reverses the orientation of $e$, we put $y_{\tau_{e'}(e')}(\textbf{\textit l},t,\cdot)=-y_{e'}(\textbf{\textit l},1-t,\cdot)$. Finally, define $y_{e''} \equiv 0$ for all other edges $e''$ of $\Gamma'$. Then ${\bf y}=(y_{e'})_{e' \in E(\Gamma')}$ is a well-defined element of $\mathcal{X}_{\Gamma}$. Using (\ref{EqTB_Gamma}), we compute:
 \begin{equation} \langle {\bf r},D \mathcal{S} ({\bf y},0,0) \rangle  = 2|E_{int}(\Gamma)| \int \limits_{t=0}^{\varepsilon} \langle r_{e'}(t), y_{e'}(t,\gamma_{e'}(t)) \rangle_{g}>0 \end{equation}
in contradiction to the choice of ${\bf r}$. This completes the proof of the second part of the Proposition. The proof of the third part is analogous.
\end{proof}
\begin{corollary} \label{CorTransv}
\begin{enumerate}
\item For every element ${\bf x} \in \mathcal{X}_{\Gamma,reg}$, the space $\mathcal{M}_{\Gamma,{\bf x}}({\bf p}_{+},{\bf p}_{-})$ is a manifold whose dimension is given by the right-hand side of the index formula (\ref{EqIndex}). The space is empty in the case when the right-hand side of (\ref{EqIndex}) is negative. 
\item Let $\mathcal{X}_{\Sigma}$ denote the vector space 
\begin{equation} \mathcal{X}_{\Sigma}= \oplus_{\Gamma} \mathcal{X}_{\Gamma} \label{EqDefXSigma}, \end{equation} where the direct sum is over all the ribbon graphs $\Gamma$ in the ribbon graph decomposition (\ref{EqRibbonGrDec}) of $\mathcal{M}_{\Sigma}$.
There is a subset $\mathcal{X}_{\Sigma,reg} \subset \mathcal{X}_{\Sigma}$ of second category, so that \begin{equation} \mathcal{X}_{\Sigma,reg} \subset \oplus_{\Gamma} \mathcal{X}_{\Gamma,reg} \label{EqSigmaReg} \end{equation} and, in addition, every element $\bf{y}=(\bf{x}_{\Gamma})_{\Gamma} \in \mathcal{X}_{\Sigma,reg}$ satisfies the following condition: if the graph $\widetilde{\Gamma}$ is obtained from $\Gamma$ by collapsing internal edges, then $\bf{x}_{\widetilde{\Gamma}}$ coincides with the restriction of $\bf{x}_{\Gamma}$ 
to $Met_{0}(\widetilde{\Gamma}) \subset Met_{0}(\Gamma)$.
\end{enumerate}
\end{corollary}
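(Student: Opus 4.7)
The proof splits into the two parts of the Corollary and each follows essentially by unpacking Proposition \ref{PropGener}.

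For part 1, I would apply the implicit function theorem for Fredholm sections of Banach bundles. By construction, $\mathcal{M}_{\Gamma,\mathbf{x}}(\mathbf{p}_+,\mathbf{p}_-)$ is the zero locus of the section $\mathbf{S}_{\mathbf{x}}$ defined in (\ref{EqSec1}). Part 1 of Proposition \ref{PropGener} tells us that $\mathbf{S}_{\mathbf{x}}$ is Fredholm with index given by the right-hand side of (\ref{EqIndex}), and the assumption $\mathbf{x}\in\mathcal{X}_{\Gamma,reg}$ together with part 2 of the Proposition gives transversality to the zero section. The implicit function theorem then equips the zero locus with the structure of a smooth manifold of dimension equal to the Fredholm index. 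If the index is strictly negative, transversality at a hypothetical zero would force the linearization $D\mathbf{S}_{\mathbf{x}}$ to be surjective, hence of non-negative Fredholm index, contradicting the hypothesis; so no zeros can exist and the space is empty.

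For part 2 I would proceed by induction on the poset of ribbon graphs appearing in the decomposition (\ref{EqRibbonGrDec}), ordered by $\widetilde{\Gamma}\preceq\Gamma$ if and only if $\widetilde{\Gamma}$ is obtained from $\Gamma$ by collapsing a set of internal edges subject to the convention that no cycle collapses entirely. Since $\Sigma$ is fixed and every internal vertex has valency at least three, an Euler characteristic count bounds $|E(\Gamma)|$, so only finitely many isomorphism classes of such $\Gamma$ occur and the poset is finite. The minimal elements are those $\Gamma$ having no collapsible internal edges; for each minimal $\Gamma$ I would simply pick any $\mathbf{x}_\Gamma\in\mathcal{X}_{\Gamma,reg}$, available by part 2 of Proposition \ref{PropGener}. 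Ascending through the poset, for each non-minimal $\Gamma$ I would assume compatible $\mathbf{x}_{\widetilde{\Gamma}}$ have already been chosen for every $\widetilde{\Gamma}\prec\Gamma$ and invoke part 3 of Proposition \ref{PropGener} to produce a second category subset $\mathcal{X}_{\Gamma,reg}^{\mathrm{comp}}\subset\mathcal{X}_\Gamma$ of regular perturbations whose restriction to each stratum $Met_0(\widetilde{\Gamma})\subset Met_0(\Gamma)$ agrees with the previously fixed $\mathbf{x}_{\widetilde{\Gamma}}$. Pick $\mathbf{x}_\Gamma$ from this set. Finally, $\mathcal{X}_{\Sigma,reg}$ is defined as the preimage in $\mathcal{X}_\Sigma=\oplus_\Gamma\mathcal{X}_\Gamma$ of $\prod_\Gamma \mathcal{X}_{\Gamma,reg}^{\mathrm{comp}}$ under the factor projections; as a finite intersection of second category sets in a Baire space, it is again second category, and (\ref{EqSigmaReg}) together with the collapsing compatibility holds on it by construction.

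The main technical point to verify is that the inductive step, which literally invokes compatibility with the immediate predecessors of $\Gamma$, in fact enforces compatibility with every $\widetilde{\Gamma}\prec\Gamma$ in the poset. This follows from the nesting $Met_0(\widetilde{\widetilde{\Gamma}})\subset Met_0(\widetilde{\Gamma})\subset Met_0(\Gamma)$ of closed boundary strata, which makes restriction of perturbation data transitive, so that matching one step at a time propagates down the whole chain. The remaining care is purely combinatorial: one should verify that the covering $\Gamma'\to\Gamma$ behaves compatibly under edge collapse (the preimage of a collapsed internal edge consists of two edges of $\Gamma'$ identified in pairs by the involution $\tau_e$), so that restricting data on $\Gamma'$ to the stratum where the collapsed edges have length zero produces well-defined data on $\widetilde{\Gamma}'$.
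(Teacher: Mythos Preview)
Your argument is correct and, for Part~1, is simply a more explicit version of what the paper says (``follows from the first two parts of Proposition~\ref{PropGener}'').

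For Part~2, your induction runs in the \emph{opposite} direction from the paper's stated proof: the paper says to start with the graphs labelling the \emph{top}-dimensional cells (the trivalent $\Gamma$, i.e.\ the maximal elements of the collapse poset) and then ``extend over the remaining cells'', whereas you start at the minimal elements and ascend. Your direction is the one dictated by Part~3 of Proposition~\ref{PropGener}, which takes as input regular data on all $\widetilde{\Gamma}\prec\Gamma$ and produces compatible regular data on $\Gamma$; the paper's wording appears to be a slip. Either way, the substance is the same inductive application of Proposition~\ref{PropGener}.

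Two small remarks. First, your ``main technical point'' about immediate predecessors versus all predecessors is a non-issue: Part~3 of the Proposition already requires data on \emph{every} $\widetilde{\Gamma}\prec\Gamma$, and by the inductive hypothesis you have precisely that, so no transitivity argument is needed. Second, your description of $\mathcal{X}_{\Sigma,reg}$ as a finite intersection of second-category subsets is slightly loose: the compatibility constraint itself cuts out a proper closed linear subspace of $\oplus_\Gamma\mathcal{X}_\Gamma$, so the genericity assertion should be read as ``second category within the space of compatible data''. The paper is equally brief on this point, and for the applications only non-emptiness is actually used.
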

The second part of this Corollary means that the vector field data for different ribbon graphs can be chosen consistently with the attachments of the corresponding cells in the ribbon graph deocmposition.  
\begin{proof}
The first part of the Corollary follows from the first two parts of Proposition \ref{PropGener}. To prove the second part, start by associating an arbitrary vector field datum in $\mathcal{X}_{\Gamma,reg}$ to each graph $\Gamma$ labelling a top-dimenional cell in (\ref{EqRibbonGrDec}) and use the third part of Proposition \ref{PropGener} successively to extend over the remaining cells.   
\end{proof}
We will consider certain partial compactifications of the spaces $\mathcal{M}_{\Gamma,{\bf x}}({\bf p}_{+},{\bf p}_{-})$. The main idea is as follows. We observe that there are the following three sources of non-compactness of the spaces:
\begin{enumerate}
\item Breaking of the trajectory corresponding to an external edge of the graph into several trajectories connecting critical points.
\item Convergence to zero of the length $l_{e}$ of an internal edge $e$ of the graph.
\item Breaking of a trajectory corresponding to an internal edge of the graph.
\end{enumerate}
\begin{figure}[!htbp]
\centering	\floatbox[{\capbeside\thisfloatsetup{capbesideposition={left,center},capbesidewidth=3cm}}]{figure}[\FBwidth]{\caption{Three types of boundary strata of natural compactifications of the spaces of flow graphs.}	\label{Fig4}} {\includegraphics[width=0.6\textwidth]{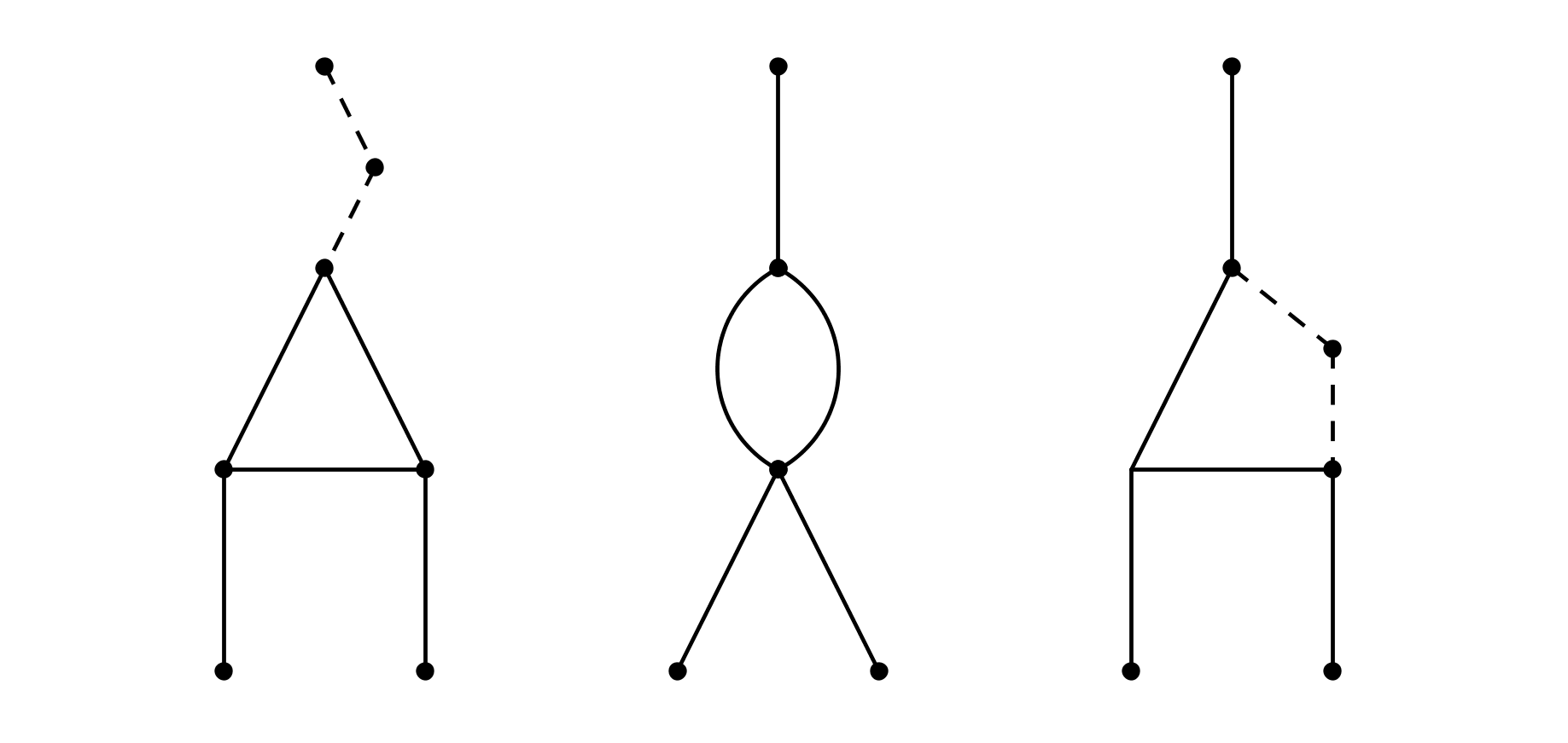}}
\end{figure} 
We will consider the partial compactification $\overline{\mathcal{M}}_{\Gamma,{\bf x}}({\bf p}_{+},{\bf p}_{-})$ of $\mathcal{M}_{\Gamma,{\bf x}}({\bf p}_{+},{\bf p}_{-})$ obtained by adding the strata of the first and the second (but not the third) type. We now give the formal definition.\\ \\
Given two critical points $p$ and $p'$ of $f$, denote by
\begin{equation} \mathcal{M}(p,p')=\{\gamma: \R \rightarrow M, \dot{\gamma}(t)=\nabla_{g}f(\gamma(t)), \lim_{t \rightarrow -\infty } \gamma(t)=p,\lim_{t \rightarrow \infty } \gamma(t)=p'\}/\R \label{EqMpq} \end{equation}
the space of flow trajectories emanating at $p$ and converging to $p'$. Denote by
\begin{equation} \overline{\mathcal{M}}(p,p')= \mathcal{M}(p,p') \cup \bigcup \limits_{m \geq 1; q_{1},\dots,q_{m}} \mathcal{M}(p,q_{1}) \times \dots \times \mathcal{M}(q_{m},p') \label{EqMbarpq} \end{equation}
the corresponding space of broken trajectories. Given two $n$-tuples ${\bf p}=(p_{1}, \dots, p_{n})$ and ${\bf p'}=(p'_{1}, \dots, p'_{n})$ of critical points, we write 
\begin{equation}
\overline{\mathcal{M}}({\bf p},{\bf p'})=\overline{\mathcal{M}}(p_{1},p_{1}') \times \dots \times \overline{\mathcal{M}}(p_{n},p_{n}'). \label{EqMbarpqbar} \end{equation}
\begin{definition} \label{DefMGammaxBar}
We define 
\begin{equation}
\overline{\mathcal{M}}_{\Gamma,{\bf x}}({\bf p}_{+},{\bf p}_{-})=
\bigcup \limits_{{\bf q}_{+}, {\bf q}_{-}, \widetilde{\Gamma} \prec \Gamma} \overline{\mathcal{M}}({\bf p}_{+},{\bf q}_{+}) \times \mathcal{M}_{\widetilde{\Gamma},{\bf x}}({\bf q}_{+},{\bf q}_{-}) \times \overline{\mathcal{M}}({\bf q}_{-},{\bf p}_{-}), \label{EqDefMGammabar}
\end{equation}
where the union is over all ${\bf q}_{+} \in Crit(f)^{\times n_{+}}$, ${\bf q}_{-} \in Crit(f)^{\times n_{-}}$ and all ribbon graphs $\widetilde{\Gamma}$ obtained by collapsing edges of $\Gamma$, so that no cycle is collapsed.
\end{definition}
We now establish the properties of the spaces $\overline{\mathcal{M}}_{\Gamma,{\bf x}}({\bf p}_{+},{\bf p}_{-})$ which will be used in the proof of Theorem \ref{ThCohOp}.
\begin{proposition} \label{PropMbarProperties}

\begin{enumerate}
\item For every ${\bf x} \in \mathcal{X}_{\Gamma}$ as in the third part of Proposition \ref{PropGener}, the space $\overline{\mathcal{M}}_{\Gamma,{\bf x}}({\bf p}_{+},{\bf p}_{-})$ is a manifold with corners.
\item The boundary $\partial \overline{\mathcal{M}}_{\Gamma,{\bf x}}({\bf p}_{+},{\bf p}_{-})$ is given by the disjoint union
\begin{equation} \begin{split} 
\partial \overline{\mathcal{M}}_{\Gamma,{\bf x}}({\bf p}_{+},{\bf p}_{-})=
&\quad \, \, \, (\coprod\limits_{|{\bf q}_{+}|-|{\bf p}_{+}|=1} \mathcal{M}({\bf p}_{+},{\bf q}_{+}) \times \overline{\mathcal{M}}_{\Gamma,{\bf x}}({\bf q}_{+},{\bf p}_{-}))\\  \coprod &\quad \, \, \, (\coprod\limits_{|{\bf p}_{-}|-|{\bf q}_{-}|=1}  \overline{\mathcal{M}}_{\Gamma,{\bf x}}({\bf q}_{+},{\bf p}_{-}) \times \mathcal{M}({\bf q}_{-},{\bf p}_{-}))\\  \coprod & \, \, \, \, \, \, (\coprod \limits_{e \in E_{int}(\Gamma) - L(\Gamma)} \overline{\mathcal{M}}_{\Gamma/e,{\bf x}}({\bf p}_{+},{\bf p}_{-})),  \label{EqMGammaBoundary}
\end{split}
\end{equation}
where the last union is over all the internal edges $e$ of $\Gamma$ which are not loops and where $\Gamma/e$ denotes the ribbon graph obtained from $\Gamma$ by collapsing $e$.
\item The projection $\pi_{\Gamma}: \overline{\mathcal{M}}_{\Gamma,{\bf x}}({\bf p}_{+},{\bf p}_{-}) \rightarrow \mathcal{M}_{\Sigma}$ defined by forgetting ${\bf \gamma}$ is proper. 
\end{enumerate}
\end{proposition}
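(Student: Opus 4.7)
The plan is to handle the three parts in order, with the bulk of the genuine work lying in the compactness analysis of part (3), from which the remaining claims follow.

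For part (1), I would combine the transversality of Corollary \ref{CorTransv} with two standard gluing constructions. On the open stratum $\mathcal{M}_{\Gamma,\mathbf{x}}(\mathbf{p}_{+},\mathbf{p}_{-})$, smoothness is given directly by Corollary \ref{CorTransv}. Near a point of the collapsed-edge stratum $\mathcal{M}_{\widetilde{\Gamma},\mathbf{x}}(\mathbf{q}_{+},\mathbf{q}_{-})$ with $\widetilde{\Gamma}\prec\Gamma$, the manifold-with-corners structure is essentially tautological: $Met_{0}(\widetilde{\Gamma})$ sits inside $Met_{0}(\Gamma)$ as a corner face, and the consistency clause of Corollary \ref{CorTransv} guarantees that the defining section ${\bf S}_{\bf x}$ matches across the stratum, so a neighbourhood is parametrized by $[0,\varepsilon)^{|E(\Gamma)|-|E(\widetilde{\Gamma})|}$ times the open stratum. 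Near a broken-external-trajectory stratum I would use the pre-gluing plus inverse function theorem argument of Schwarz (\cite{Sch}), which produces a smooth collar in one large gluing parameter per broken end. The two types of gluing commute on mixed strata because they act on disjoint edges.

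For part (2), the codimension-one faces of $\overline{\mathcal{M}}_{\Gamma,\mathbf{x}}(\mathbf{p}_{+},\mathbf{p}_{-})$ are precisely the single elementary degenerations enumerated in Definition \ref{DefMGammaxBar}. A single external trajectory breaking is exactly the condition $|\mathbf{q}_{+}|-|\mathbf{p}_{+}|=1$ (with the symmetric outgoing case), producing the first two disjoint unions in \eqref{EqMGammaBoundary}. A single internal edge $e$ collapsing to length zero produces the summand $\overline{\mathcal{M}}_{\Gamma/e,\mathbf{x}}(\mathbf{p}_{+},\mathbf{p}_{-})$; loops $e \in L(\Gamma)$ are excluded because collapsing a loop would pinch a cycle, which is forbidden in the ribbon graph decomposition \eqref{EqRibbonGrDec}. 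No other codimension-one boundary can appear, since breaking at an internal edge is ruled out by the compactness statement of part (3).

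For part (3), let $(\ell_{n},\boldsymbol{\gamma}_{n})$ be a sequence in $\overline{\mathcal{M}}_{\Gamma,\mathbf{x}}(\mathbf{p}_{+},\mathbf{p}_{-})$ with $\pi_{\Gamma}(\ell_{n},\boldsymbol{\gamma}_{n}) \to [\ell_{\infty}] \in K$ for some compact $K \subset \mathcal{M}_{\Sigma}$. By \eqref{EqRibbonGrDec}, $\ell_{\infty}$ lies in $Met_{0}(\widetilde{\Gamma})/\sim$ for some $\widetilde{\Gamma}\prec\Gamma$, and after passing to a subsequence $\ell_{n}\to\ell_{\infty}$ in $Met_{0}(\Gamma)$; in particular every edge length stays bounded. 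On each internal edge $e'$ of $\Gamma'$, the flow equation together with \eqref{EqFInt} and the uniform bound \eqref{EqCEstimate} yields $\|\dot{\gamma}_{e',n}\|_{L^{2}} \leq \ell_{e}(n)\, C$, hence a uniform $W^{1,2}$ bound and, via the compact embedding $W^{1,2}\hookrightarrow C^{0}$, convergence of a subsequence in $C^{0}([0,1],M)$; the trajectory length $\int_{0}^{1}|\dot\gamma_{e',n}|\,dt$ is uniformly bounded, so no breaking along internal edges can occur. On external edges, classical Morse compactness produces convergence to a possibly broken gradient trajectory with intermediate critical points. Reassembling yields a limit in $\overline{\mathcal{M}}_{\Gamma,\mathbf{x}}(\mathbf{p}_{+},\mathbf{p}_{-})$ as specified in Definition \ref{DefMGammaxBar}. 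The main obstacle is precisely this exclusion of internal-edge breaking; it is essentially dictated by the scaling $F_{e'} = \ell_{e}\, x_{e'}$ introduced in \eqref{EqFInt}, which was designed together with \eqref{EqCEstimate} so that bounded internal-edge lengths translate directly into bounded trajectory lengths, making $\pi_{\Gamma}$ proper.
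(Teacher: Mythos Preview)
Your argument is correct, but for part (1) it takes a genuinely different route from the paper. You build the manifold-with-corners structure by analytic gluing: smoothness on the open stratum from Corollary \ref{CorTransv}, then Schwarz-style pregluing along broken external ends and the tautological corner chart coming from $Met_{0}(\widetilde{\Gamma})\subset Met_{0}(\Gamma)$ for collapsed internal edges. The paper instead works entirely in finite dimensions: it introduces the ambient manifold $N=Met_{0}(\Gamma)\times(M\times M)^{|E(\Gamma'_{0})|}$, a submanifold-with-corners $L_{\bf x}$ built from the graphs of the internal-edge flows and the \emph{compactified} stable and unstable manifolds $\overline{W}^{u}(p_{e'})$, $\overline{W}^{s}(p_{e'})$ at the external edges, and a fat diagonal $L'$ encoding the incidence relations at internal vertices; then $\overline{\mathcal{M}}_{\Gamma,{\bf x}}({\bf p}_{+},{\bf p}_{-})=L_{\bf x}\cap L'$, with transversality read off from Proposition \ref{PropGener}. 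This finite-dimensional model imports the corner structure wholesale from the known corner structure of $\overline{W}^{u}$ and $\overline{W}^{s}$ (equations (\ref{EqBoundaryUe'})--(\ref{EqBoundarySe'})), so no gluing analysis is written out; more importantly, it is reused verbatim in Section \ref{SubSecOr} to orient the moduli spaces (Proposition \ref{PropOrMGamma} is stated in terms of orienting $L_{\bf x}$, $L'$ and $N$). Your approach is the standard Floer-theoretic one and yields explicit collar coordinates, but it does not directly feed into the orientation discussion the paper needs next. Your treatment of part (3) is considerably more detailed than the paper's, which simply asserts properness as ``a consequence of the above discussion of compactifications''; your identification of the scaling $F_{e'}=l_{e}x_{e'}$ together with the uniform bound (\ref{EqCEstimate}) as the mechanism excluding internal-edge breaking is exactly the point.
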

\begin{proof}
In order to equip the space $\overline{\mathcal{M}}_{\Gamma,{\bf x}}({\bf p}_{+},{\bf p}_{-})$ with the structure of a manifold with corners, we will identify it as a transverse intersection of a manifold and a manifold with corners.\\ \\
Assume that a connected component $\Gamma'_{0}$ of $\Gamma'$ is fixed or, equivalently, an orientation of every internal edge of $\Gamma$ is chosen. For an internal edge $e'$ of $\Gamma'_{0}$, denote by $\Phi^{e'}_{{\textbf{\textit l},\bf{x}}}(t,\cdot): M \rightarrow M$ the flow of the vector field on the right-hand side of (\ref{EqFInt}). We define $M_{e'}=M_{e'}({\textbf{\textit l}},{\bf{x}}) \subset M \times M$ as the subspace
\begin{equation}M_{e'}=\{ (q,\Phi^{e'}_{{\textbf{\textit l},\bf{x}}}(1,q)) \in M \times M: q \in M \}. \label{EqDefMe} \end{equation}
Thus $M_{e'}$ is a manifold diffeomorphic to $M$. \\ \\
Suppose now that $e'$ is an external edge of $\Gamma'_{0}$ with the corresponding critical point $p_{e'}$. Denote by $\Phi^{e'}_{{\textbf{\textit l},\bf{x}}}(t,\cdot)$ the flow of the vector field on the right-hand side of (\ref{EqFExt}). If $e'$ is marked as incoming, then we define
\begin{equation} W^{u}(p_{e'})=\{q \in M: \lim_{t \rightarrow -\infty } \Phi^{e'}_{{\textbf{\textit l},\bf{x}}}(t,q)=p_{e'}\}, \label{EqDefUe} \end{equation} 
\begin{equation} \overline{W}^{u}(p_{e'})=\bigcup \limits_{p' \in Crit(f)} \overline{\mathcal{M}}(p_{e'},p') \times W^{u}(p') \label{EqDefUeBar} \end{equation}
and \begin{equation} M_{e'}= \{(q,q): q \in \overline{W}^{u}(p_{e'})\} \subset M \times M. \label{EqDefMe1} \end{equation}
Similarly, if $e'$ is marked as outgoing, then we consider
 \begin{equation} W^{s}(p_{e'})=\{q \in M: \lim_{t \rightarrow \infty } \Phi^{e'}_{{\textbf{\textit l},\bf{x}}}(t,q)=p_{e'}\}, \label{EqDefSe} \end{equation}
 \begin{equation} \overline{W}^{s}(p_{e'})=\bigcup \limits_{p' \in Crit(f)} W^{s}(p') \times \overline{\mathcal{M}}(p',p_{e'}) \label{EqDefSeBar} \end{equation} 
and \begin{equation} M_{e'}= \{(q,q): q \in \overline{W}^{s}(p_{e'}) \} \subset M \times M. \label{EqDefBarMe2} \end{equation}
It is well known that $\overline{W}^{u}(p_{e'})$ and $\overline{W}^{s}(p_{e'})$ are manifolds with corners, whose boundaries are given by respectively
\begin{equation}
\partial \overline{W}^{u}(p_{e'})=\coprod\limits_{ind_{f}(p_{e'})-ind_{f}(p)=1} \overline{\mathcal{M}}(p_{e'},p) \times \overline{W}^{u}(p) \label{EqBoundaryUe'} \end{equation} and
\begin{equation} \partial \overline{W}^{s}(p_{e'})=\coprod\limits_{ind_{f}(p)-ind_{f}(p_{e'})=1} \overline{W}^{s}(p) \times \overline{\mathcal{M}}(p,p_{e'}). \label{EqBoundarySe'} \end{equation} We refer to \cite{We} for a detailed study of trajectory spaces in Morse theory.\\ \\
We have thus far associated to every edge $e'$ of $\Gamma'_{0}$ a manifold with corners $M_{e'}$. As a submanifold of $M \times M$, $M_{e'}$ depends on the choice of ${\textbf{\textit l}} \in Met_{0}(\Gamma)$ and of the vector field datum ${\bf x}$, however the diffeomorphism type of $M_{e'}$ is independent of these choices. Let us 
define 
\begin{equation} N=Met_{0}(\Gamma) \times (M \times M)^{\times |E(\Gamma'_{0})|} \label{EqDefMfldN} \end{equation} and denote by $L_{\bf x}$ the subset \begin{equation} L_{\bf{x}} = \bigcup \limits_{{\textbf{\textit l}} \in Met_{0}(\Gamma)}(\{ \textbf{\textit l} \} \times (\times_{e' \in E(\Gamma')}M_{e'}({\textbf{\textit l}},{\bf{x}}))) \subset N. \label{EqDefL} \end{equation}  
As a product of manifolds with corners, $L_{\bf x}$ is again a manifold with corners. \\ \\
Next we consider the submanifold $L' \subset N$ defined as follows. Recall that each edge of $\Gamma'_{0}$ is oriented. We assign to each factor $M$ appearing on the right-hand side of (\ref{EqDefMfldN}) a vertex of $\Gamma'_{0}$ by the following rule: if $e' \in E(\Gamma'_{0})$ is incident to the vertices $v$ and $v'$ in this order, then to the two factors of the copy of $M \times M$ corresponding to $e'$ the pair $v$ and $v'$ respectively are associated. Write each element of $N$ as a $(\textbf{\textit l},{\bf q})$, where ${\bf q}$ is a tuple whose entries are points of $M$ and denote for each entry $q$ by $v(q)$ the vertex of $\Gamma'_{0}$ assigned to the corresponding factor of $M$. Then $L' \subset N$ is defined as the subset of all those tuples $({\textbf{\textit l}}, {\bf q})$, such that if for two entries $q$ and $q'$ of ${\bf q}$, $v(q)=v(q')$ is the same {\it internal} vertex of $\Gamma'_{0}$, then $q=q'$. Thus $L' \subset N$ is a fat diagonal which corresponds to the incidence relations of the internal vertices of the graph. \\ \\
We identify the space $\overline{\mathcal{M}}_{\Gamma,{\bf x}}({\bf p}_{+},{\bf p}_{-})$ as the intersection $L_{\bf{x}} \cap L'$: points of $L_{{\bf x}}$ are tuples of flow lines associated to the edges of the graph, while intersecting with $L'$ corresponds to imposing the incidence relations that stem from continuity at the internal vertices. It follows from Proposition \ref{PropGener} that the intersection $L_{\bf{x}} \cap L'$ is transverse. This establishes the first part of the Proposition. Formula (\ref{EqMGammaBoundary}) follows from (\ref{EqBoundaryUe'}) and (\ref{EqBoundarySe'}). The third claim is a consequence of the above discussion of compactifications of the spaces $\mathcal{M}_{\Gamma,{\bf x}}({\bf p}_{+},{\bf p}_{-})$. 
\end{proof}
Proposition \ref{PropMbarProperties} implies that the pair $(\mathcal{M}_{\Gamma,{\bf x}}({\bf p}_{+},{\bf p}_{-}),\pi_{\Gamma})$, together with a choice of orientation of $\mathcal{M}_{\Gamma,{\bf x}}({\bf p}_{+},{\bf p}_{-})$, defines an element of the chain complex $C^{BM}_{*}(\mathcal{M}_{\Sigma})$ introduced in Section \ref{SubSecConv}.
\section{The Operations} \label{SecCohOp}
This Section contains the discussion of orientations of the spaces of flow graphs as well as the proofs of Theorem \ref{ThCohOp} and of the gluing axiom.    
\subsection{Orientations of the Spaces of Flow Graphs} \label{SubSecOr}
The identification given in the proof of Proposition \ref{PropMbarProperties} of the space $\overline{\mathcal{M}}_{\Gamma,{\bf x}}({\bf p}_{+},{\bf p}_{-})$ as a transverse intersection of submanifolds can be used in the discussion of orientations: to orient $\overline{\mathcal{M}}_{\Gamma,{\bf x}}({\bf p}_{+},{\bf p}_{-})$, it suffices to fix orientations of the manifolds $L_{\bf{x}}$, $L'$ and $N$. It follws from the definition of these manifolds that their orientations can be determined by choosing orientations of the internal edges of $\Gamma$ as well as linear orderings of the vertices and of the edges of $\Gamma$. Moreover, it is straightforward to determine how the orientations of $L_{\bf{x}}$, $L'$ and $N$, and thus the orientation of $\overline{\mathcal{M}}_{\Gamma,{\bf x}}({\bf p}_{+},{\bf p}_{-})$, change when we reorder vertices and edges or reverse the orientation of an edge. The result of this discussion is summarized in the following Proposition.
\begin{proposition} \label{PropOrMGamma}
Suppose that for every critical point $p$ of $f$, an orientation of the unstable submanifold $W^{u}_{f}(p)$ is fixed.
\begin{enumerate}
\item An orientation of $\overline{\mathcal{M}}_{\Gamma,{\bf x}}({\bf p}_{+},{\bf p}_{-})$ is uniquely defined by choosing orientations of all the internal edges of $\Gamma$ and linear orderings of all the edges and of the internal vertices of $\Gamma$.
\item Reversing the orientation of an internal edge or interchanging two consecutive internal vertices changes the orientation of $\overline{\mathcal{M}}_{\Gamma,{\bf x}}({\bf p}_{+},{\bf p}_{-})$ by a $(-1)^{d}$, where $d$ is the dimension of $M$.  
\item Interchanging two consecutive edges $e_{i}$, $e_{j}$ changes the orientation of $\overline{\mathcal{M}}_{\Gamma,{\bf x}}({\bf p}_{+},{\bf p}_{-})$ by $(-1)^{k_{i}k_{j}}$, where the integers $k_{i}$ are given by the following rule. If $e_{i}$ is an internal edge of the graph, then $k_{i}=d+1$. If $e_{i}$ is an external edge with the correspoinding critical point $p_{i}$, then 
\begin{equation} k_{i}=\begin{cases}d-ind_{f}(p_{i})+1, & \text{ if } e_{i} \text{ is marked as incoming},\\ ind_{f}(p_{i})+1,& \text{ if }e_{i} \text { is marked as outgoing.} \end{cases} \end{equation}
\end{enumerate}
\end{proposition}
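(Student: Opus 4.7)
My proof plan follows the identification, established in the proof of Proposition \ref{PropMbarProperties}, of $\overline{\mathcal{M}}_{\Gamma,{\bf x}}({\bf p}_{+},{\bf p}_{-})$ as the transverse intersection $L_{\bf x} \cap L' \subset N$, where $N=Met_{0}(\Gamma) \times (M \times M)^{|E(\Gamma'_{0})|}$. Since a transverse intersection of two oriented submanifolds of an oriented ambient manifold is canonically oriented (by the rule $T(L_{\bf x}\cap L') \oplus N_{L_{\bf x}} \oplus N_{L'} = TN$), it suffices to specify coherent orientations of $N$, $L_{\bf x}$ and $L'$ in terms of the data listed in the Proposition and then compute the effect of each elementary move.

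For part 1, I would orient $N$ as a product by: orienting the length factor $Met_{0}(\Gamma)\simeq \mathbb{R}^{|E(\Gamma)|}_{\geq 0}$ via the chosen ordering of edges, and orienting $(M\times M)^{|E(\Gamma'_{0})|}$ via the ordering of edges together with the fixed orientation of $M$ (here the choice of orientations of the internal edges of $\Gamma$ is what selects the connected component $\Gamma'_{0}$ and thus fixes which factor of $M\times M$ plays the role of "source" and "target"). For $L_{\bf x}$, I would orient each $M_{e'}$ using the diffeomorphism to $M$ for an internal edge, to $\overline{W}^{u}(p_{e})$ for an incoming external edge (using the fixed orientations of the unstable manifolds), and to $\overline{W}^{s}(p_{e})$ for an outgoing external edge (oriented by co-orientation, using the orientations of $W^{u}$ and $M$); the product structure is again prescribed by the ordering of edges. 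Finally $L'$ is a product of diagonals, one per internal vertex, oriented using the orientation of $M$ and the ordering of the internal vertices. This establishes part 1.

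For part 2, reversing the orientation of an internal edge $e$ replaces $\Gamma'_{0}$ by a neighbouring component of $\Gamma'$ and, in the product $N$, swaps the two $M$-factors of the copy of $M \times M$ indexed by $e$, changing the orientation of $N$ by $(-1)^{d^{2}}=(-1)^{d}$; it also swaps the two factors in the defining embedding $M_{e'} \hookrightarrow M \times M$, producing the same sign change in $L_{\bf x}$, while $L'$ is symmetric in these factors and is unaffected. The transverse intersection rule then gives a net factor of $(-1)^{d}$. Interchanging two consecutive internal vertices permutes two $M$-factors inside the diagonal product defining $L'$, again contributing $(-1)^{d^{2}}=(-1)^{d}$, and leaves $N$ and $L_{\bf x}$ unchanged. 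For part 3, I would repackage $N$ and $L_{\bf x}$ using the interleaved block structure in which the $i$-th block is $\mathbb{R}_{\geq 0} \times M_{e'_{i}}$ (of dimension $k_{i}$ in $L_{\bf x}$ and of dimension $2d+1$ in $N$). Swapping two consecutive such blocks acts on $L_{\bf x}$ by the Koszul sign $(-1)^{k_{i}k_{j}}$ and on $N$ by $(-1)^{(2d+1)^{2}}$; combining via the intersection formula with the unchanged $L'$ and simplifying mod $2$ yields exactly the claimed sign $(-1)^{k_{i}k_{j}}$.

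The main obstacle is purely bookkeeping: keeping track of how the orientation conventions for transverse intersections interact with the product orientations of $N$ and $L_{\bf x}$, and ensuring that the $+1$ in the definition of $k_{i}$ (which reflects the length parameter contributed by $e_{i}$ to $Met_{0}(\Gamma)$) ends up on the correct side of the calculation so that the Koszul sign from permuting blocks produces $(-1)^{k_{i}k_{j}}$ rather than an off-by-one variant. No analytic input beyond what has already been used is required; the argument is entirely linear-algebraic once the identification of $\overline{\mathcal{M}}_{\Gamma,{\bf x}}({\bf p}_{+},{\bf p}_{-})$ from Proposition \ref{PropMbarProperties} is invoked.
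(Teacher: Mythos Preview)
Your approach is exactly the one the paper takes: the paragraph preceding the Proposition explains that the orientation of $\overline{\mathcal{M}}_{\Gamma,{\bf x}}({\bf p}_{+},{\bf p}_{-})$ is induced from orientations of $N$, $L_{\bf x}$ and $L'$ via the transverse intersection identification of Proposition~\ref{PropMbarProperties}, and that the sign rules are then ``straightforward'' to read off from the product structures. Your sketch in fact supplies more detail than the paper does; the only place to be careful is the edge-reversal computation in part~2, where you should make sure the contributions from $N$, $L_{\bf x}$ and $L'$ do not cancel (the diagonal $L'$ is set-theoretically symmetric under the swap, but its induced orientation as a submanifold of the reoriented $N$ is not obviously unchanged), so that the net sign really is $(-1)^{d}$ rather than $(-1)^{2d}$.
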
 
We can now explain the local systems $det$ and $or$ on $\mathcal{M}_{\Sigma}$ which appear in Theorem \ref{ThCohOp}. A local trivialization of $det$ on $Met(\Gamma)/Aut(\Gamma) \subset \mathcal{M}_{\Sigma}$ is defined by a choice of orientations of the internal edges as well as of linear orderings of the vertices, of the internal edges and of those external edges of $\Gamma$, which are marked as incoming. Changing the orientation of an internal edge or interchanging two consecutive vertices or edges changes the sign of the trivialization. The fibre of $det$ can be identified with the determinant line of the cohomology $H^{*}(\Gamma,O_{-})$ of $\Gamma$ relative the vertices corresponding to the outgoing marked points. The local system $det$ is graded by assigning to each section the degree $\chi(\Sigma)-n_{-}$.\\ \\
The local system $or$ is the orientation sheaf of $\mathcal{M}_{\Sigma}$. Explicitly, a local trivialization of $or$ on $Met(\Gamma)/Aut(\Gamma) \subset \mathcal{M}_{\Sigma}$ is defined by a choice of orientations of the internal edges and of linear orderings of all the vertices and edges of $\Gamma$. As before, reversing the orientation of an edge or interchanging consecutive vertices or edges reverses the sign of the trivializazion. The fibre of the local system $or$ can be identified with the determinant line of $H^{*}(\Gamma)$. The grading of the system $or$ is trivial, i. e. every section has degree zero.
\begin{lemma} \label{LemmaOrMGamma}
Suppose that every vertex of the ribbon graph $\Gamma$ has odd valency. Then the pair $(\overline{\mathcal{M}}_{\Gamma,{\bf x}}({\bf p}_{+},{\bf p}_{-}),\pi_{\Gamma})$ defines an element of $C^{BM}_{*}(\mathcal{M}_{\Sigma};det^{\otimes d} \otimes or)$.
\end{lemma}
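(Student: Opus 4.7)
The plan is to realize the pair $(\overline{\mathcal{M}}_{\Gamma,{\bf x}}({\bf p}_{+},{\bf p}_{-}),\pi_{\Gamma})$ as a generator of $C^{BM}_{*}(\mathcal{M}_{\Sigma};det^{\otimes d} \otimes or)$ by equipping it with a compatible orientation of the domain together with a canonical section of $\pi_{\Gamma}^{*}(det^{\otimes d} \otimes or)$. By Proposition \ref{PropMbarProperties}, the space $\overline{\mathcal{M}}_{\Gamma,{\bf x}}({\bf p}_{+},{\bf p}_{-})$ is a manifold with corners and $\pi_{\Gamma}$ is proper, so all of the content of the lemma lies in specifying this orientation/section data intrinsically.

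First I would fix auxiliary combinatorial data $\alpha$ consisting of orientations of every internal edge of $\Gamma$ together with linear orderings of all the vertices and of all the edges. Proposition \ref{PropOrMGamma} assigns to $\alpha$ an orientation $o_{\alpha}$ of $\overline{\mathcal{M}}_{\Gamma,{\bf x}}({\bf p}_{+},{\bf p}_{-})$. The same data canonically contains the input required to trivialize $det$ on $Met(\Gamma)/Aut(\Gamma)\subset \mathcal{M}_{\Sigma}$ (namely orientations of internal edges and orderings of the vertices, the internal edges, and the incoming external edges) and to trivialize $or$ on the same cell (orientations of internal edges and orderings of all vertices and edges). This produces a local trivialization $t_{\alpha}$ of $det^{\otimes d}\otimes or$, and the proposed generator is $(\overline{\mathcal{M}}_{\Gamma,{\bf x}}({\bf p}_{+},{\bf p}_{-}),o_{\alpha},\pi_{\Gamma},\pi_{\Gamma}^{*}t_{\alpha})$.

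The substance of the proof is showing this generator is independent of $\alpha$. It suffices to check invariance under a set of moves that generate changes of $\alpha$: reversing the orientation of an internal edge, transposing two consecutive vertices, and transposing two consecutive edges. For each move one computes the sign picked up by $o_{\alpha}$ using parts 2 and 3 of Proposition \ref{PropOrMGamma} and compares it to the sign picked up by $t_{\alpha}$, obtained from the stated sign rules for $det$ and $or$ (together with the fact that a sign flip in a local trivialization of $det$ induces $(-1)^{d}$ on $det^{\otimes d}$). Well-definedness follows if the two signs agree in every case, since then $\pi_{\Gamma}^{*}t_{\alpha}$ descends to a canonical section of $\pi_{\Gamma}^{*}(det^{\otimes d}\otimes or)$ and the oriented pair represents the same element of $C^{BM}_{*}$ regardless of $\alpha$.

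The hard part will be the bookkeeping for transpositions of edges, since the exponents $k_{i}$ in part 3 of Proposition \ref{PropOrMGamma} depend on whether an edge is internal or external and, for external edges, on the Morse index of the associated critical point, whereas $det$ is only sensitive to orderings among the internal and incoming external edges. The odd-valency hypothesis is what makes the parities balance: it pins down the parity of the number of half-edges at each internal vertex, and thereby governs how reorderings of vertices interact with reorderings of their incident edges in the construction of $L_{\bf x}$, $L'$ and $N$ used in the proof of Proposition \ref{PropMbarProperties}. Verifying this parity match, move by move, against the combinations of $d$, $\text{ind}_{f}(p_{i})$ and the extra $+1$'s appearing in the $k_{i}$'s is the main computation; once it is done the section $\pi_{\Gamma}^{*}t_{\alpha}$ is intrinsic and the lemma is proved.
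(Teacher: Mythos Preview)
Your proposal is correct and follows essentially the same approach as the paper: show that an orientation of $\overline{\mathcal{M}}_{\Gamma,{\bf x}}({\bf p}_{+},{\bf p}_{-})$ and a trivialization of $\pi_{\Gamma}^{*}(det^{\otimes d}\otimes or)$ transform identically under changes of the auxiliary data $\alpha$, so that the pair of choices descends to a canonical generator. The paper's proof differs only in how it organizes the sign verification. Rather than carrying out the move-by-move parity check you describe, it first observes that the external edges carry canonical orientations (from the in/out marking), so $det$ may equivalently be defined using orderings of \emph{all} vertices; after matching the sign rules of Proposition~\ref{PropOrMGamma} against those defining $det^{\otimes d}$, the residual discrepancy is exactly the content of $or$, and the claim reduces to showing that a trivialization of $or$ is the same as an orientation of the vector space $W_{E}$ spanned by the edges. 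For this last step the paper invokes a result of Conant and Vogtmann: when every vertex has odd valency, the ribbon structure furnishes a \emph{canonical} orientation of the space $W_{V}\oplus W_{H}$ spanned by vertices and half-edges. This is precisely the mechanism that makes your parities balance, and it replaces the explicit case analysis you outline with a single conceptual observation. Your direct computation would work, but the Conant--Vogtmann shortcut is what the paper actually uses.
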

\begin{proof}
We first note that since every external edge of $\Gamma$ is marked as either incoming or outgoing and thus has a natural orientation, we could equivalently define the local system $det$ by considering linear orderings of {\it all} the vertices instead of only the internal ones. Indeed, if $e$ is an external edge with the incident vertices $v$ and $v'$, where $v$ is an internal and $v'$ an external vertex, then we insert $v'$ into a given ordering of the internal vertices as either the predecessor or the successor of $v$, according to the orientation of $e$.\\ \\   
We must show that an orientation of $\overline{\mathcal{M}}_{\Gamma,{\bf x}}({\bf p}_{+},{\bf p}_{-})$ is the same as a trivialization of the pullback under $\pi_{\Gamma}$ of the local system $det^{\otimes d} \otimes or$. Comparing the definitions of $det$ and $or$ with the result of Proposition \ref{PropOrMGamma}, it suffices to check that a trivialization of $or$ is given by a choice an orientation of the vector space spanned by the edges of the graph. This follows from the observation going back to J. Conant and K. Vogtmann (\cite{CoVo}, Corollary 1) that if all vertices of $\Gamma$ have odd valency, then there is a natural orientation of the vector space spanned by the vertices and the half-edges. 
\end{proof}      
\subsection{Proof of Theorem \ref{ThCohOp}} \label{SubSecProofCohOp}
We can now complete the proof of Theorem \ref{ThCohOp}. Recall that the top-dimensional cells in the ribbon graph decomposition (\ref{EqRibbonGrDec}) are labelled by the ribbon graphs whose internal vertices have valency three. By Proposition 
\ref{PropMbarProperties} and Lemma \ref{LemmaOrMGamma}, to each such graph $\Gamma$ and each choice of a vector field datum ${\bf x} \in \mathcal{X}_{\Gamma, reg}$ is associated a geometric chain \begin{equation} Z_{\Gamma,{\bf x}}=(\overline{\mathcal{M}}_{\Gamma,{\bf x}}({\bf p}_{+},{\bf p}_{-}),\pi_{\Gamma}) \in C^{BM}_{*}(\mathcal{M}_{\Sigma};det^{\otimes d} \otimes or).  \label{EqChainZGamma} \end{equation} Moreover, we may assume that the vector field data for different graphs $\Gamma$ are chosen as in the second part of Corollary \ref{CorTransv}.\\ \\
To prove that 
$$ F^{M}_{\Sigma}\colon  (C^{*}(f))^{\otimes n_{+}}  \rightarrow C^{BM}_{*}(\mathcal{M}_{\Sigma};det^{\otimes d} \otimes or) \otimes (C^{*}(f))^{\otimes n_{-}}, $$ $${\bf p}_{+} \mapsto \sum \limits_{\Gamma, {\bf p}_{-}} Z_{\Gamma,{\bf x}}({\bf p}_{+}, {\bf p}_{-}) \otimes {\bf p}_{-}$$
is a cochain map, we compute $(\sum_{\Gamma, {\bf p}_{-}} \partial Z_{\Gamma,{\bf x}}({\bf p}_{+}, {\bf p}_{-}) \otimes {\bf p}_{-})$. By the second part of Proposition \ref{PropMbarProperties}, the latter expression is a sum of terms of two types: the first type corresponds to breaking of trajectories at external edges (see the expressions in the first two lines of (\ref{EqMGammaBoundary})), while terms of the second type correspond to collapsing an internal edge of $\Gamma$ (see the expression in the third line of (\ref{EqMGammaBoundary})). The summands of the first type yield \begin{equation} \begin{split}  \sum \limits_{\Gamma,{\bf p}_{-}} Z_{\Gamma,{\bf x}}(d{\bf p}_{+}, {\bf p}_{-}) \otimes {\bf p}_{-} &- \sum \limits_{\Gamma,{\bf p}_{-}} (-1)^{dim \text{ }\overline{\mathcal{M}}_{\Gamma,{\bf x}}({\bf p}_{+},{\bf p}_{-})} Z_{\Gamma,{\bf x}}({\bf p}_{+}, {\bf p}_{-})\otimes d {\bf p}_{-}. \end{split}\label{EqCompDel1} \end{equation}
We must show that the sum of all the terms of the second type is zero. These are of the form $\pm (\overline{\mathcal{M}}_{\widetilde{\Gamma},{\bf x}}({\bf p}_{+},{\bf p}_{-}),\pi_{\widetilde{\Gamma}})$, where $\widetilde{\Gamma}$ is obtained by collapsing a single internal edge in a ribbon graph $\Gamma$, all of whose internal vertices have valency three. The sign is determined as follows: $\overline{\mathcal{M}}_{\widetilde{\Gamma},{\bf x}}({\bf p}_{+},{\bf p}_{-})$ is oriented as a boundary component of $\overline{\mathcal{M}}_{\Gamma,{\bf x}}({\bf p}_{+},{\bf p}_{-})$ and the trivialization of the pullback to $\overline{\mathcal{M}}_{\widetilde{\Gamma},{\bf x}}({\bf p}_{+},{\bf p}_{-})$ of the local system $det^{\otimes d} \otimes or$ is induced by the trivialization of the pullback to $\overline{\mathcal{M}}_{{\Gamma},{\bf x}}({\bf p}_{+},{\bf p}_{-})$.\\ \\ We observe that in the sum of the terms of second type exactly those ribbon graphs $\widetilde{\Gamma}$ appear, where there is a unique internal vertex of valency four and all the remaining internal vertices have valency three. For each such $\widetilde{\Gamma}$, there are exactly two distinct pairs $(\Gamma_{1},e_{1})$ and $(\Gamma_{2},e_{2})$, so that $\widetilde{\Gamma}$ is obtained from $\Gamma_{1}$ and $\Gamma_{2}$ by collapsing the internal edges $e_{1} \in E(\Gamma_{1})$ and $e_{2} \in E(\Gamma_{2})$ respectively: $\Gamma_{1}$ and $\Gamma_{2}$ arise from the two different ways of expanding the four-valent vertex of $\widetilde{\Gamma}$ into two trivalent vertices.  
\begin{figure}[!htbp]
\centering	\floatbox[{\capbeside\thisfloatsetup{capbesideposition={left,center},capbesidewidth=3cm}}]{figure}[\FBwidth]{\caption{The two different ways of expanding a single four-valent vertex of a ribbon graph into two trivalent vertices.}	\label{Fig5}} {\includegraphics[width=0.6\textwidth]{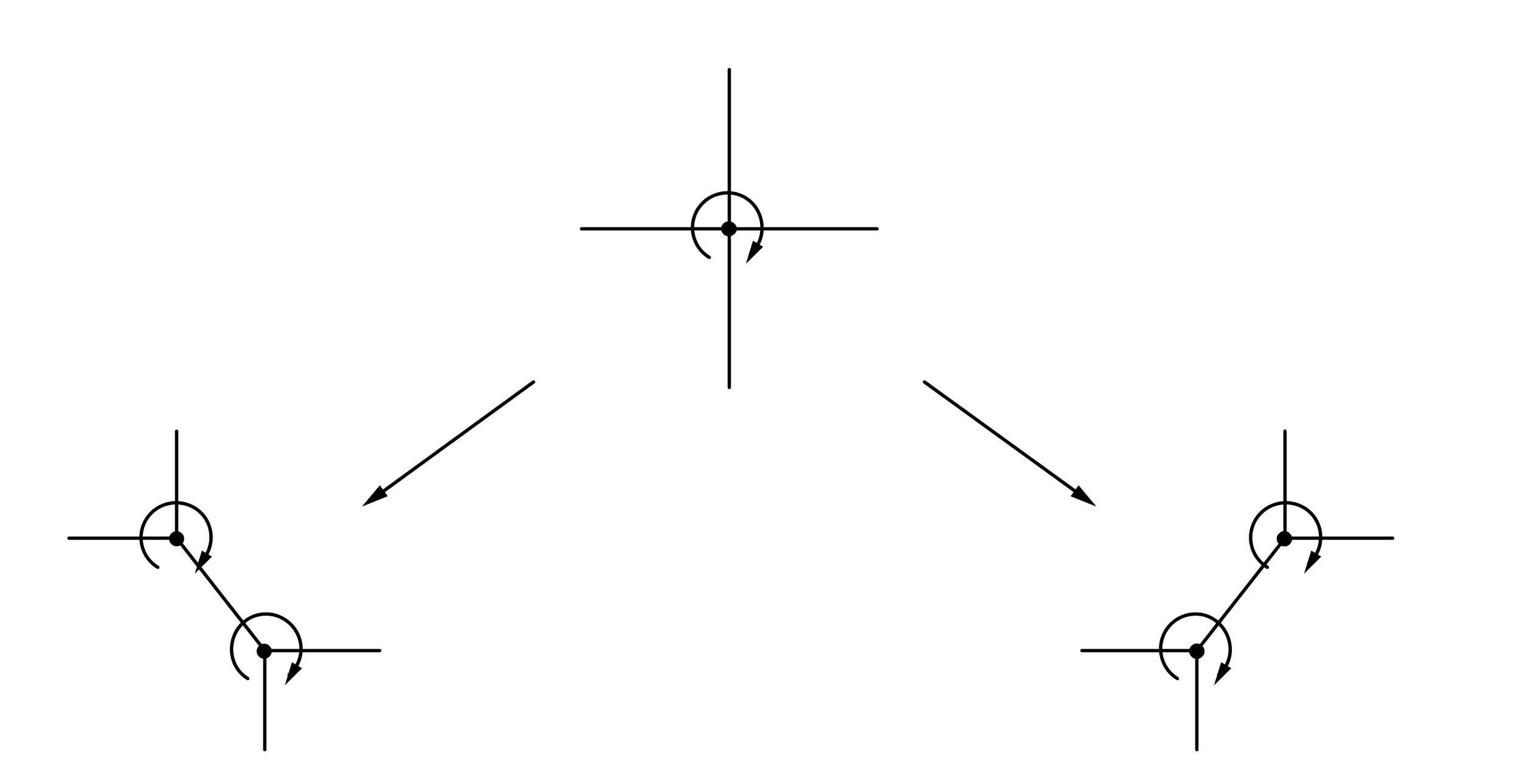}}
\end{figure} \\ \\
To complete the proof of the first part of the Theorem, it suffices to show:
\begin{lemma} \label{LemmaProofMain1.1}
The two copies of $(\overline{\mathcal{M}}_{\widetilde{\Gamma},{\bf x}}({\bf p}_{+},{\bf p}_{-}),\pi_{\widetilde{\Gamma}})$ corresponding to boundary components of $(\Gamma_{1},e_{1})$ and of $(\Gamma_{2},e_{2})$ enter the sum with the opposite sign.
\end{lemma}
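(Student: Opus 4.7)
The plan is to localize the analysis to a neighborhood of the unique four-valent vertex $v$ of $\widetilde{\Gamma}$ at which the collapsing takes place. Label the four half-edges at $v$ by $h_1, h_2, h_3, h_4$ in the cyclic order prescribed by the ribbon structure. A trivalent expansion of $v$ corresponds to grouping these half-edges into two pairs of consecutive ones in the cyclic ordering; there are exactly two such planar pairings, which produce $\Gamma_1$ and $\Gamma_2$ (say, $(h_1h_2|h_3h_4)$ and $(h_2h_3|h_4h_1)$ respectively). Away from this vertex, $\Gamma_1$, $\Gamma_2$ and $\widetilde{\Gamma}$ are identical, and the new internal edges $e_1 \in E(\Gamma_1)$, $e_2 \in E(\Gamma_2)$ are precisely what is collapsed.

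Next I would fix reference data for $\widetilde{\Gamma}$: orientations of its internal edges and linear orderings of all edges and all internal vertices. I would extend this data to each $\Gamma_i$ in a uniform way, namely by choosing the same orientation for the new edge $e_i$ (say from the vertex holding the first pair to the vertex holding the second), inserting $e_i$ at the last position in the edge ordering, and inserting the two new internal vertices at the end of the vertex ordering in a fixed order. With these normalizations, Proposition \ref{PropOrMGamma} together with Lemma \ref{LemmaOrMGamma} determines trivializations of the pullback of $det^{\otimes d} \otimes or$ over $\overline{\mathcal{M}}_{\Gamma_1,{\bf x}}$ and $\overline{\mathcal{M}}_{\Gamma_2,{\bf x}}$, and thus also the induced boundary trivializations on the common boundary face $\overline{\mathcal{M}}_{\widetilde{\Gamma},{\bf x}}({\bf p}_+,{\bf p}_-)$.

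Using the identification from the proof of Proposition \ref{PropMbarProperties} of $\overline{\mathcal{M}}_{\Gamma_i,{\bf x}}$ as the transverse intersection $L_{\bf x} \cap L'$ inside $N = Met_0(\Gamma_i) \times (M \times M)^{|E(\Gamma_i')|}$, the boundary face $\{l_{e_i}=0\}$ acquires an orientation from the outward normal direction $-\partial/\partial l_{e_i}$. I would then compare this orientation with the orientation $\overline{\mathcal{M}}_{\widetilde{\Gamma},{\bf x}}$ inherits directly from the analogous transverse intersection for $\widetilde{\Gamma}$. The difference is accounted for by (a) the suppression of the coordinate $l_{e_i}$ from $Met_0(\Gamma_i)$, (b) the removal of the factor $M\times M$ indexed by $e_i$, and (c) the collapse of the two new internal vertices of $\Gamma_i$ into the four-valent vertex $v$ of $\widetilde{\Gamma}$ together with the corresponding change in the fat diagonal $L'$. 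Once these three contributions are collected, the resulting sign depends only on the combinatorics of how the four half-edges $h_1,\dots,h_4$ are grouped at the endpoints of $e_i$.

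The crucial combinatorial step is to observe that the two planar pairings differ by a cyclic rotation of $(h_1h_2h_3h_4)$ by one, which, in the identification of the four factors of $M$ adjacent to $e_i$ with the four half-edges at $v$, corresponds to an odd permutation of these factors. Tracking this permutation through the product orientation of $(M\times M)^{|E(\Gamma_i')|}$ produces exactly one extra factor of $(-1)^{d}$ when passing from $\Gamma_1$ to $\Gamma_2$, up to an equal number of transpositions on each side coming from the diagonal identification in $L'$. The parity computation of these diagonal transpositions, which I would carry out using parts 2 and 3 of Proposition \ref{PropOrMGamma} to convert edge-ordering and vertex-ordering rearrangements into signs, will yield the opposite-sign statement. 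The main obstacle is precisely this bookkeeping: the cleanest route is to set up a local orientation model at the four-valent vertex that isolates the dependence on the cyclic ordering of half-edges from the global combinatorial data, so that the antisymmetry between $\Gamma_1$ and $\Gamma_2$ becomes manifest.
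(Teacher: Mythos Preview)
Your outline is plausible but takes a different route from the paper and leaves the decisive step undone. You propose to track signs directly through the ambient manifold $N=Met_0(\Gamma_i)\times (M\times M)^{|E(\Gamma_i')|}$, the outward normal $-\partial/\partial l_{e_i}$, the removed $M\times M$ factor, and the change in the fat diagonal $L'$. The paper instead works entirely with the combinatorial orientation spaces: for $d$ even it identifies an orientation of $\overline{\mathcal{M}}_{\Gamma_i,{\bf x}}$ with an orientation of $W_{E(\Gamma_i)}$ and a trivialization of $or$ with an orientation of $W_{E(\Gamma_i)}\oplus W_{V(\Gamma_i)}\oplus W_{H(\Gamma_i)}$. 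The boundary passage on the $W_E$ part is just the projection $\R_{e_i}\oplus W_{E(\widetilde{\Gamma})}\to W_{E(\widetilde{\Gamma})}$ and is handled identically for $i=1,2$; the whole sign discrepancy is thereby pushed into the $W_V\oplus W_H$ factor. There the key input is the Conant--Vogtmann observation already invoked in Lemma~\ref{LemmaOrMGamma}: the ribbon structure canonically orients $W_{V(\Gamma_i)}\oplus W_{H(\Gamma_i)}$ when all vertices are odd-valent, and one checks directly that the two induced orientations on $W_{V(\widetilde{\Gamma})}\oplus W_{H(\widetilde{\Gamma})}$ are opposite. That single local check \emph{is} the ``local orientation model at the four-valent vertex'' you ask for at the end, but it becomes clean only after the $W_E$ and $W_V\oplus W_H$ pieces have been decoupled; in your setup the three sign sources remain entangled.

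Concretely, your assertion that the cyclic shift of $(h_1h_2h_3h_4)$ contributes ``exactly one extra factor of $(-1)^d$'' is neither justified nor evidently the end of the story: the conclusion of the Lemma is a sign of $-1$ independent of $d$, so the ``equal number of transpositions on each side'' you defer would have to supply a further $(-1)^{d+1}$, and you have not computed this. Since you yourself identify this bookkeeping as the main obstacle, the cleaner fix is to adopt the paper's separation: carry the geometric orientation in $W_E$ (respectively $W_{det}\oplus W_E$ for $d$ odd), carry the local-system trivialization in $W_E\oplus W_V\oplus W_H$, and then verify that the canonical Conant--Vogtmann orientations on $W_{V(\Gamma_1)}\oplus W_{H(\Gamma_1)}$ and $W_{V(\Gamma_2)}\oplus W_{H(\Gamma_2)}$ project to opposite orientations of $W_{V(\widetilde{\Gamma})}\oplus W_{H(\widetilde{\Gamma})}$.
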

\begin{proof} 
Assume first that $d$ is even. In this case by Proposition \ref{PropOrMGamma}, an orientation of $\overline{\mathcal{M}}_{{\Gamma}_{1},{\bf x}}({\bf p}_{+},{\bf p}_{-})$ is the same as an orientation of the vector space $W_{E(\Gamma_{1})}$ spanned by the edges of $\Gamma_{1}$. The boundary orientation of $\overline{\mathcal{M}}_{\widetilde{{\Gamma}},{\bf x}}({\bf p}_{+},{\bf p}_{-}) \subset \partial \overline{\mathcal{M}}_{{\Gamma_{1}},{\bf x}}({\bf p}_{+},{\bf p}_{-})$ is determined by requiring that the projection
\begin{equation} W_{E(\Gamma_{1})} \simeq \R_{e_{1}} \oplus W_{E(\widetilde{\Gamma})} \rightarrow W_{E(\widetilde{\Gamma})} \label{EqOrW1} \end{equation} to the second factor
be orientation-preserving. Here the symbol $\R_{e_{1}}$ denotes the direct summand of $\R$ corresponding to the edge $e_{1}$. The orientation of $\overline{\mathcal{M}}_{\widetilde{{\Gamma}},{\bf x}}({\bf p}_{+},{\bf p}_{-})$ as a boundary component of $ \overline{\mathcal{M}}_{{\Gamma_{2}},{\bf x}}({\bf p}_{+},{\bf p}_{-})$ is defined analogously. \\ \\
It follows from the definition that a trivialization of the pullback of the local system $or$ to $\overline{\mathcal{M}}_{{\Gamma}_{1},{\bf x}}({\bf p}_{+},{\bf p}_{-})$ is given by an orientation of the vector space
$W_{E(\Gamma_{1})} \oplus W_{V(\Gamma_{1})} \oplus W_{H(\Gamma_{1})} $, where $W_{V(\Gamma_{1})}$ and $W_{H(\Gamma_{1})}$ denote the vector spaces generated by the vertices and by the half-edges of $\Gamma_{1}$ respectively. The projection $W_{E(\Gamma_{1})} \rightarrow W_{E(\widetilde{\Gamma})}$ was described above, while the projection $W_{V(\Gamma_{1})} \oplus W_{H(\Gamma_{1})}  \rightarrow W_{V(\widetilde{\Gamma})} \oplus W_{H(\widetilde{\Gamma})} $ is given as follows. Denote by $h_{1}$ and $h'_{1}$ the half-edges of $e_{1}$, by $v_{1}$ and $v'_{1}$ the corresponding vertices of $\Gamma_{1}$ and by $v \in V(\widetilde{\Gamma})$ the vertex to which $e_{1}$ is collapsed. We identify \begin{equation} W_{V(\Gamma_{1})-\{v',v'_{1}\}} \simeq W_{V(\widetilde{\Gamma})-\{v\}} \label{EqNotOr1} \end{equation} as well as  \begin{equation} W_{H(\Gamma_{1})-\{h_{1},h'_{1}\}} \simeq W_{H(\widetilde{\Gamma})} \label{EqNotOr2} \end{equation} and consider the map
 $$ W_{V(\Gamma_{1})} \oplus W_{H(\Gamma_{1})}   \simeq (\R_{h_{1}} \oplus \R_{h'_{1}} \oplus \R_{v_{1}})  \oplus (\R_{v'_{1}} \oplus W_{V(\Gamma_{1})-\{v_{1},v'_{1}\}} \oplus W_{H(\Gamma_{1})-\{h_{1},h'_{1}\}})$$ \begin{equation} \, \, \, \rightarrow \R_{v}  \oplus W_{V(\widetilde{\Gamma})-\{v\}} \oplus W_{H(\widetilde{\Gamma})} \simeq W_{V(\widetilde{\Gamma})} \oplus W_{H(\widetilde{\Gamma})},   \label{EqOrW2}
\end{equation}
where the arrow denotes projection to the second factor. A trivialization of $or$ over $\overline{\mathcal{M}}_{{\Gamma_{1}},{\bf x}}({\bf p}_{+},{\bf p}_{-})$ induces a trivialization over $\overline{\mathcal{M}}_{{\widetilde{\Gamma}},{\bf x}}({\bf p}_{+},{\bf p}_{-})$ by requiring that the projection \begin{equation} W_{E(\Gamma_{1})} \oplus ( W_{V(\Gamma_{1})} \oplus W_{H(\Gamma_{1})}) \rightarrow W_{E(\widetilde{\Gamma})} \oplus ( W_{V(\widetilde{\Gamma})} \oplus W_{H(\widetilde{\Gamma})}) \label{EqOrW3} \end{equation} whose components are given by (\ref{EqOrW1}) and (\ref{EqOrW2}) be orientation-preserving. In the same way a trivialization over $\overline{\mathcal{M}}_{\widetilde{\Gamma},{\bf x}}({\bf p}_{+},{\bf p}_{-})$ is induced by a trivialization over $\overline{\mathcal{M}}_{{\Gamma}_{2},{\bf x}}({\bf p}_{+},{\bf p}_{-})$.\\ \\
Recall from the proof of Lemma \ref{LemmaProofMain1.1} that since $\Gamma_{1}$ and $\Gamma_{2}$ have vertices of odd valency, the ribbon structures of $\Gamma_{1}$ and of $\Gamma_{2}$ define orientations of the vector spaces $W_{V(\Gamma_{1})} \oplus W_{H(\Gamma_{1})}$ and $W_{V(\Gamma_{2})} \oplus W_{H(\Gamma_{2})} $ respectively. It is immidiate to check using the explicit description given by (\ref{EqOrW2}) that the two orientations on $W_{V(\widetilde{\Gamma})} \oplus W_{H(\widetilde{\Gamma})}$ induced by the projections $$ W_{V(\Gamma_{1})} \oplus W_{H(\Gamma_{1})} \rightarrow W_{V(\widetilde{\Gamma})} \oplus W_{H(\widetilde{\Gamma})} $$ and $$W_{V(\Gamma_{2})} \oplus W_{H(\Gamma_{2})} \rightarrow  W_{V(\widetilde{\Gamma})} \oplus W_{H(\widetilde{\Gamma})}$$ are opposite. Thus if the orientations of $\overline{\mathcal{M}}_{{\widetilde{\Gamma}},{\bf x}}({\bf p}_{+},{\bf p}_{-})$ as boundary of $\overline{\mathcal{M}}_{{\Gamma}_{1},{\bf x}}({\bf p}_{+},{\bf p}_{-})$ and as boundary of $\overline{\mathcal{M}}_{{\Gamma_{2}},{\bf x}}({\bf p}_{+},{\bf p}_{-})$ coincide, i. e. the projection (\ref{EqOrW1}) and the corresponding projection for $\Gamma_{2}$ are both orientation-preserving, then the orientations of $W_{E(\widetilde{\Gamma})} \oplus W_{V(\widetilde{\Gamma})} \oplus W_{H(\widetilde{\Gamma})}$ induced by the projection    
(\ref{EqOrW3}) and by the corresponding projection for $\Gamma_{2}$ are opposite. In this case the two trivializations of $or$ over $\overline{\mathcal{M}}_{\widetilde{\Gamma},{\bf x}}({\bf p}_{+},{\bf p}_{-})$ differ by a sign. This completes the proof of the Lemma in the case when $d$ is even. The case of odd $d$ follows by the same argument, but with $W_{E(\Gamma)}$ replaced everywhere by $W_{det} \oplus W_{E(\Gamma)}$, where $W_{det}$ is the vector space whose orientation corresponds to a trivialization of $det$ (explicitly, $W_{det}=W_{E(\Gamma)-E_{-}(\Gamma)} \oplus W_{V(\Gamma)} \oplus W_{H(\Gamma)}$, where $E_{-}(\Gamma)$ denote the outgoing external edges of $\Gamma$). 
\end{proof}
We now turn to the proof of the second part of the Theorem. By the first part, Proposition \ref{PropIdBMHom} and the isomorphism (\ref{EqPDBM}), $F^{M}_{\Sigma}$ induces a linear map
\begin{equation} H F^{M}_{\Sigma}: (H^{*}(M))^{\otimes n_{+}} \rightarrow H^{*}(\mathcal{M}_{\Sigma}; det^{d}) \otimes (H^{*}(M))^{\otimes n_{-}}. \label{EqHfSigmaProof} \end{equation}
 Comparing the definition of the grading of the local system $det$ with the index formula of Proposition \ref{PropGener}, one finds that the degree of $HF_{\Sigma}$ is zero (note that if every internal vertex of $\Gamma$ has valency three, then the number $|E(\Gamma)|$ of edges of $\Gamma$ coincides with the dimension of $\mathcal{M}_{\Sigma}$).\\ \\ The proof of the independence of the operations $HF^{M}_{\Sigma}$ of the choices relies on a flow graph version of the continuation argument, which is classically used to prove the invariance of Morse homology. Given two triples $(f,g,{\bf x})$ and $(f',g',{\bf x}')$, we fix a smooth one-paramater family $(f_{t},g_{t},{\bf x}_{t})_{t \in \R}$ so that
\begin{equation}
(f_{t},g_{t},{\bf x}_{t})= \begin{cases} (f,g,{\bf x}) & \text{ if } t \leq -1 \\
(f',g',{\bf x}') & \text{ if } t \geq 1. \end{cases}
\end{equation} Recall that the classical continuation principle consists of the following: one observes that for suitable choice of the one-parameter family $(f_{t},g_{t})_{t \in \R}$, each of the spaces
\begin{equation} \mathcal{N}(p,p')= \{\gamma: \R \rightarrow M, \dot{\gamma}(t)=\nabla_{g_{t}}f_{t}(\gamma(t)), \lim_{t \rightarrow -\infty } \gamma(t)=p, \lim_{t \rightarrow \infty } \gamma(t)=p' \}, \label{EqDefNpp} \end{equation}
where $p \in Crit(f)$, $p' \in Crit(f')$, is a compact oriented manifold. One then shows that the count of the elements of the zero-dimensional spaces yields a quasi-isomorphism
$$\Psi: C^{*}(f,g) \rightarrow C^{*}(f',g'), $$ 
\begin{equation}\Psi: p \mapsto \sum \limits_{ind_{f}(p')=ind_{f}(p)}|\mathcal{N}(p,p')|p'. \label{EqConstrPsi} \end{equation}
Now denote by $$ \Phi^{M}_{\Sigma}\colon  (C^{*}(f',g'))^{\otimes n_{+}}  \rightarrow C^{BM}_{*}(\mathcal{M}_{\Sigma};det^{d} \otimes or) \otimes (C^{*}(f',g'))^{\otimes n_{-}} $$ the map associated by the construction of the first part of the Theorem to the triple $(f',g',{\bf x}')$. We will construct a chain homotopy $\Theta$ between $\Phi^{M}_{\Sigma} \circ \Psi^{\otimes n_{+}}$ and $(Id \otimes \Psi^{n_{-}}) \circ F^{M}_{\Sigma}$. This chain homotopy will be obtained by studying spaces $\overline{\mathcal{N}}_{\Gamma}({\bf p}_{+},{\bf p}'_{-})$ which we now introduce. \\ \\
Recall from Definition \ref{Def2} that $\mathcal{M}_{{\Gamma},{\bf x}}({\bf p}_{+},{\bf p}_{-})$ is the zero locus of ${\bf S}-\mathcal{F}_{\mathbf{x}}$, where ${\bf S}$ and $\mathcal{F}_{\mathbf{x}}$ are sections of a Banach bundle $\mathcal{E}$ over $Met(\Gamma) \times \mathcal{B}_{\Gamma}({\bf p}_{+},{\bf p}_{-})$ (see Definition \ref{DefB_Gamma}). Given ${\bf p}_{+} \in Crit(f)^{\times n_{+}}$ and ${\bf p}'_{-} \in Crit(f')^{\times n_{-}}$, we define $\mathcal{N}_{\Gamma}({\bf p}_{+},{\bf p}'_{-}) \subset Met(\Gamma) \times \mathcal{B}_{\Gamma}({\bf p}_{+},{\bf p}'_{-})$ as the union $$\mathcal{N}_{\Gamma}({\bf p}_{+},{\bf p}'_{-})=\cup_{T \in \R} \mathcal{N}_{\Gamma,T}({\bf p}_{+},{\bf p}'_{-}),$$ where $\mathcal{N}_{\Gamma,T}({\bf p}_{+},{\bf p}'_{-})$ is the zero locus of the section ${\bf S}-\mathcal{F}_{T}$ of $\mathcal{E}$, with ${\bf S}$ is defined as in (\ref{EqDefSectionS}) and $\mathcal{F}_{T}$ given as follows. If $e'$ is an external edge of $\Gamma'$, then \begin{equation} F_{e',T}({\textbf{\textit l}},{\bf \gamma})(t)=\nabla_{g_{t+T}}f_{t+T}(\gamma_{e'}(t))+\sigma(t)x_{e',t+T}(\textbf{\textit l},|t|,\gamma_{e'}(t)). \label{EqFExt2} \end{equation} If $e'$ is an internal edge of $\Gamma'$ which is mapped to $e \in E(\Gamma)$ under the projection $\Gamma' \rightarrow \Gamma$, then \begin{equation} F_{e',T}({\textbf{\textit l}},{\bf \gamma})(t)=l_{e}x_{e',T}(\textbf{\textit l},t,\gamma_{e'}(t)) \label{EqFInt2} \end{equation}  
(compare with (\ref{EqFExt}) and (\ref{EqFInt}) respectively). We denote by $\overline{\mathcal{N}}_{\Gamma}({\bf p}_{+},{\bf p}'_{-})$ the partial compactification of $\mathcal{N}_{\Gamma}({\bf p}_{+},{\bf p}'_{-})$ given by
\begin{equation} \begin{split} \overline{\mathcal{N}}_{\Gamma}({\bf p}_{+},{\bf p}'_{-})= & \quad \, 
\bigcup  \limits_{{\bf q}_{+}, {\bf q}'_{-}, \widetilde{\Gamma} \prec \Gamma}  \overline{\mathcal{M}}({\bf p}_{+},{\bf q}_{+}) \times \mathcal{N}_{\widetilde{\Gamma},{\bf x}}({\bf q}_{+},{\bf q}'_{-}) \times \overline{\mathcal{M}}'({\bf q}'_{-},{\bf p}'_{-}) \\& 
\bigcup (\bigcup \limits_{|{\bf q}'_{+}|=|{\bf p}_{+}|} \mathcal{N}({\bf p}_{+},{\bf q}'_{+}) \times \overline{\mathcal{M}}_{\Gamma,{\bf x'}}({\bf q}'_{+},{\bf p}'_{-}))\\
& \bigcup (\bigcup \limits_{|{\bf q}_{-}|=|{\bf q}'_{-}|} \overline{\mathcal{M}}_{\Gamma,{\bf x}}({\bf p}_{+},{\bf q}_{-})) \times \mathcal{N}({\bf q}_{-},{\bf p}'_{-})), 
\end{split} \label{EqDefNGammaBar} \end{equation}
where $\mathcal{N}({\bf p},{\bf p}')$ denotes the product
\begin{equation}\mathcal{N}({\bf p},{\bf p}')=\mathcal{N}(p_{1},p'_{1}) \times \dots \times \mathcal{N}(p_{n},p'_{n}) \label{EqDefNWOGamma} \end{equation}
(note that if $|{\bf p}|=|{\bf p}'|$, then the space $\mathcal{N}({\bf p},{\bf p}')$ is empty unless $ind_{f}({\bf p}_{j})=ind_{f'}({\bf p}'_{j})$ for $j=1, \dots, n$.)\\ \\
The meaning of the terms of the right-hand side of (\ref{EqDefNGammaBar}) is as follows. The expression in the first line corresponds to allowing internal edges of length zero as well as breaking of trajectories along the external edges of $\Gamma$ for fixed $T \in \R$. Thus if we denote by $\overline{\mathcal{N}}_{\Gamma,T}({\bf p}_{+},{\bf p}'_{-})$ the partial compactification of $\mathcal{N}_{\Gamma,T}({\bf p}_{+},{\bf p}'_{-})$ defined as in \ref{DefMGammaxBar}, then the union in the first line of (\ref{EqDefNGammaBar}) can be identified with $\cup_{T \in \R} \overline{\mathcal{N}}_{\Gamma,T}({\bf p}_{+},{\bf p}'_{-})$. The terms in the second and third line correspond to partial compactifications at
$T \rightarrow \infty$ and $T \rightarrow - \infty$ respectively.\\ \\
By a slight abuse of notation, we will again denote by $\pi_{\Gamma}: \overline{\mathcal{N}}_{\Gamma}({\bf p}_{+},{\bf p}'_{-}) \rightarrow \mathcal{M}_{\Sigma}$ the natural projection.
\begin{lemma} \label{LemmaChainHomConstr}
\begin{enumerate}
\item There exists a one-parameter family $({\bf x}_{t})_{t \in \R}$, so that every pair
\begin{equation} Y_{\Gamma}({\bf p}_{+},{\bf p}'_{-}) =(\overline{\mathcal{N}}_{\Gamma}({\bf p}_{+},{\bf p}'_{-}),\pi_{\Gamma}) \label{EqDefYGamma} \end{equation} defines an element of
$C^{BM}_{*}(\mathcal{M}_{\Sigma},det^{\otimes d} \otimes or)$.
\item Define \begin{equation} \begin{split} \Theta: (C^{*}(f))^{\otimes n_{+}}  & \rightarrow C^{BM}_{*}(\mathcal{M}_{\Sigma};det^{\otimes d} \otimes or) \otimes (C^{*}(f',g'))^{\otimes n_{-}}, \\ {\bf p}_{+} & \mapsto \sum \limits_{\Gamma,{\bf p}'_{-}}Y_{\Gamma}({\bf p}_{+},{\bf p}'_{-}) \otimes {\bf p}'_{-}. \end{split}  \end{equation}
\end{enumerate}
Then \begin{equation} d \circ \Theta+\Theta \circ d =
(Id \otimes \Psi^{\otimes n_{-}}) \circ F^{M}_{\Sigma}-\Phi^{M}_{\Sigma} \circ (\Psi^{\otimes n_{+}}) \label{EqChainHomClaim} \end{equation}
\end{lemma}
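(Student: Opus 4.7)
The plan is to mirror the construction and proofs of Proposition \ref{PropGener}, Proposition \ref{PropMbarProperties} and Theorem \ref{ThCohOp}, treating the extra parameter $T \in \R$ on an equal footing with the length and map data.

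For the first claim, I would extend the Banach bundle setup of Definition \ref{Def2} by regarding $T$ as an additional free coordinate, so that $\mathcal{N}_{\Gamma}({\bf p}_{+},{\bf p}'_{-})$ is realized as the zero locus of a single section of a Banach bundle over $\R \times Met(\Gamma) \times \mathcal{B}_{\Gamma}({\bf p}_{+},{\bf p}'_{-})$. The Fredholm analysis of Proposition \ref{PropGener}(1) goes through verbatim, with the index formula (\ref{EqIndex}) shifted by one because of the $T$-direction. A Sard-Smale argument on the space of admissible one-parameter families $({\bf x}_{t})_{t \in \R}$, constrained as in Proposition \ref{PropGener}(3) so that collapses of internal edges produce consistent data, yields a subset of second category of families for which this section is transverse to the zero section. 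For the partial compactification $\overline{\mathcal{N}}_{\Gamma}({\bf p}_{+},{\bf p}'_{-})$ in (\ref{EqDefNGammaBar}), I would invoke the intersection-of-manifolds-with-corners description used in the proof of Proposition \ref{PropMbarProperties}, augmented by the fact that for $|T| \geq 1$ the perturbation data is stationary, so the strata at $T \to \pm \infty$ split as products of the classical continuation trajectory spaces $\mathcal{N}({\bf p},{\bf p}')$ with flow-graph spaces $\overline{\mathcal{M}}_{\Gamma}$. Properness of $\pi_{\Gamma}$ follows from the uniform $W^{1,2}$-bound (\ref{EqCEstimate}) together with the standard broken-trajectory compactness applied uniformly in $T$. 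The orientation analysis of Section \ref{SubSecOr} extends directly, since the extra $\R$-factor for $T$ is canonically oriented, and so $\overline{\mathcal{N}}_{\Gamma}$ is again oriented relative to the pullback of $det^{\otimes d} \otimes or$, which proves part 1.

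For part 2, I would compute $\partial Y_{\Gamma}({\bf p}_{+},{\bf p}'_{-})$ by enumerating the boundary strata of $\overline{\mathcal{N}}_{\Gamma}$: (a) breaking of trajectories at incoming external edges; (b) breaking at outgoing external edges; (c) collapse of internal non-loop edges; (d) the $T \to +\infty$ stratum $\mathcal{N}({\bf p}_{+},{\bf q}'_{+}) \times \overline{\mathcal{M}}_{\Gamma,{\bf x}'}({\bf q}'_{+},{\bf p}'_{-})$; and (e) the $T \to -\infty$ stratum $\overline{\mathcal{M}}_{\Gamma,{\bf x}}({\bf p}_{+},{\bf q}_{-}) \times \mathcal{N}({\bf q}_{-},{\bf p}'_{-})$. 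The collapse strata (c) cancel in pairs by exactly the argument of Lemma \ref{LemmaProofMain1.1}, since the added $T$-direction does not interact with the expansion of a four-valent internal vertex. Summed over $\Gamma$ and ${\bf p}'_{-}$, the contributions of (a) reproduce $(\Theta \circ d)({\bf p}_{+})$, those of (b) combine with the tensor-product contribution $(-1)^{\deg Y_{\Gamma}} Y_{\Gamma} \otimes d{\bf p}'_{-}$ to yield $(d \circ \Theta)({\bf p}_{+})$, and the strata (d) and (e) give exactly $\Phi^{M}_{\Sigma} \circ \Psi^{\otimes n_{+}}({\bf p}_{+})$ and $(Id \otimes \Psi^{\otimes n_{-}}) \circ F^{M}_{\Sigma}({\bf p}_{+})$ respectively, by the definitions of $F^{M}_{\Sigma}$, $\Phi^{M}_{\Sigma}$ and $\Psi$ in (\ref{EqConstrPsi}).

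The principal obstacle is the sign bookkeeping. Since $Y_{\Gamma}$ is twisted by $det^{\otimes d} \otimes or$, each boundary face contributes a sign depending on the parity of $d$, the incoming/outgoing status of the relevant external edge, and the position of the normal $T$-direction in the chosen ordering. I would redo the proof of Lemma \ref{LemmaProofMain1.1} with $W_{E(\Gamma)}$ replaced by $\R_{T} \oplus W_{E(\Gamma)}$ for the strata (d) and (e); the key observation is that the outward normal directions at the two ends $T \to \pm \infty$ are $+\partial_{T}$ and $-\partial_{T}$ respectively, which produces the minus sign separating $\Phi^{M}_{\Sigma} \circ \Psi^{\otimes n_{+}}$ from $(Id \otimes \Psi^{\otimes n_{-}}) \circ F^{M}_{\Sigma}$ on the right-hand side of (\ref{EqChainHomClaim}). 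The signs for strata (a) and (b) are then forced by Proposition \ref{PropOrMGamma}(3), and agree with the Morse codifferential signs by the computation already appearing in (\ref{EqCompDel1}).
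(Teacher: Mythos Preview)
Your proposal is correct and follows essentially the same route as the paper's own proof: the paper also reduces part 1 to the arguments of Propositions \ref{PropGener} and \ref{PropMbarProperties}, and for part 2 writes down the same five-term boundary decomposition of $\overline{\mathcal{N}}_{\Gamma}({\bf p}_{+},{\bf p}'_{-})$ (your (a)--(e) are exactly the five lines of the paper's formula (\ref{EqDelNGammaBar})), matches (a), (b) with $d\circ\Theta+\Theta\circ d$, matches (d), (e) with the two terms on the right-hand side of (\ref{EqChainHomClaim}), and kills (c) via Lemma \ref{LemmaProofMain1.1}. Your treatment is in fact more explicit than the paper's on the sign bookkeeping, which the paper leaves implicit.
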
 
This Lemma completes the proof of the second part of Theorem \ref{ThCohOp}: it follows from (\ref{EqChainHomClaim}) that the maps $HF^{M}_{\Sigma}$ and $H\Phi^{M}_{\Sigma}$ defined by $(f,g,{\bf x})$ and $(f',g',{\bf x}')$ respectively coincide up to the isomorphism $ H^{*}(M) \rightarrow H^{*}(M)$ induced by $\Psi$. 
\begin{proof}
The first part of the Lemma follows by analogous arguments as in the proofs of Propositions \ref{PropGener} and \ref{PropMbarProperties}. From (\ref{EqDefNGammaBar}), the boundary of $\overline{\mathcal{N}}_{\Gamma}({\bf p}_{+},{\bf p}'_{-})$ may be identified as the disjoint union
\begin{equation} \begin{split} \partial \overline{\mathcal{N}}_{\Gamma}({\bf p}_{+},{\bf p}'_{-})= & \quad \, \, \, \, \, \, \, \,  
\coprod \limits_{|{\bf q}_{+}|- |{\bf p}_{+}|=1}  \mathcal{M}({\bf p}_{+},{\bf q}_{+}) \times \overline{\mathcal{N}}_{\Gamma,{\bf x}}({\bf q}_{+},{\bf p}'_{-})  \\&
\coprod (\, \, \, \, \coprod  \limits_{|{\bf p}'_{-}|- |{\bf q}'_{-}|=1} \overline{\mathcal{N}}_{\Gamma,{\bf x}}({\bf q}_{+},{\bf p}'_{-}) \times \mathcal{M}'({\bf q}'_{-},{\bf p}'_{-}))\\& 
\coprod( \, \quad \coprod \limits_{|{\bf q}'_{+}|=|{\bf p}_{+}|} \mathcal{N}({\bf p}_{+},{\bf q}'_{+}) \times \overline{\mathcal{M}}_{\Gamma,{\bf x'}}({\bf q}'_{+},{\bf p}'_{-}))\\
& \coprod (\, \quad \coprod \limits_{|{\bf q}_{-}|=|{\bf q}'_{-}|} \overline{\mathcal{M}}_{\Gamma,{\bf x}}({\bf p}_{+},{\bf q}_{-}) \times \mathcal{N}({\bf q}_{-},{\bf p}'_{-})) \\& \coprod (\coprod \limits_{e \in E_{int}(\Gamma) - L(\Gamma)} \overline{\mathcal{N}}_{\Gamma/e}({\bf p}_{+},{\bf p}'_{-})).
\end{split} \label{EqDelNGammaBar} \end{equation}
The sum over all $\Gamma$ of the geometric chains corresponsing to the terms on the left-hand side and in the first two lines of the right-hand side of (\ref{EqDelNGammaBar}) yields the left-hand side of (\ref{EqChainHomClaim}), while the expressions in the third and fourth lines correspond to the right-hand side of (\ref{EqChainHomClaim}). Finally, using Lemma \ref{LemmaProofMain1.1}, the sum of the chains corresponding to the terms in the last line of (\ref{EqDelNGammaBar}) vanishes. 
\end{proof}
\subsection{The Gluing Axiom} \label{SubSecGlue}
The goal of this Section is to show that the operations constructed in Theorem \ref{ThCohOp} are compatible with the gluing of surfaces and thus form what is called a homological conformal field theory. \\ \\
Let $\Sigma_{1}$ and $\Sigma_{2}$ be two surfaces as in Theorem \ref{ThCohOp} and assume that the number $n_{1-}$ of the outgoing marked points of $ \Sigma_{1}$ coincides with the number $n_{2+}$ of the incoming marked points of $\Sigma_{2}$. Denote this common number by $n$ and denote by $\Sigma$ the compact orieted surface obtained by attaching $\Sigma_{1}$ to $\Sigma_{2}$ along closed disjoint intervals around the outgoing marked points of $\Sigma_{1}$ respectively around the incoming marked points of $\Sigma_{2}$. Using the homeomorphism (\ref{EqRibbonGrDec}) of the ribbon graph decomposition, there is a map
\begin{equation}\Xi: \mathcal{M}_{\Sigma_{1}} \times \mathcal{M}_{\Sigma_{2}} \rightarrow \mathcal{M}_{\Sigma} \label{EqGluinOnMSigma} \end{equation} 
defined by first attaching the outgoing edges of a metric ribbon graph corresponding to $\Sigma_{1}$ to the incoming edges of a metric ribbon graph corresponding to $\Sigma_{2}$ and then erasing the bivalent vertices from the resulting graph. Denote by $\pi_{1}: \mathcal{M}_{\Sigma_{1}} \times \mathcal{M}_{\Sigma_{2}} \rightarrow \mathcal{M}_{\Sigma_{1}}$ and by $\pi_{2}: \mathcal{M}_{\Sigma_{1}} \times \mathcal{M}_{\Sigma_{2}} \rightarrow \mathcal{M}_{\Sigma_{2}}$ the projection to the first respectively to the second factor. The local system $det$ is compatible with gluing, i. e. $\Xi^{*} det \simeq \pi_{1}^{*}det \oplus \pi_{2}^{*}det$. Thus $\Xi$ induces a map
\begin{equation} \Xi^{*}: H^{*}(\mathcal{M}_{\Sigma};det^{\otimes d}) \rightarrow H^{*}(\mathcal{M}_{\Sigma_{1}};det^{\otimes d}) \otimes H^{*}(\mathcal{M}_{\Sigma_{2}};det^{\otimes d}). \label{DefXiHom} \end{equation} 
The next Proposition establishes the compatibility of the operations constructed in Theorem \ref{ThCohOp} with the gluing maps (\ref{DefXiHom}).
\begin{proposition} \label{PropGlueAxiom}
The following diagram commutes:\\ \\ 
\vspace{0.3cm}
\begin{xy} \xymatrix @C=0.45in{ 
(H^{*}(M))^{\otimes n_{+}} \ar[r]^/-2em/{HF^{M}_{\Sigma}} \ar[d]^/0em/{HF^{M}_{\Sigma_{1}}} & H^{*}(\mathcal{M}_{\Sigma};det^{\otimes d}) \otimes (H^{*}(M))^{\otimes n_{-}} \ar[d]^/0em/{\Xi^{*} \otimes Id}\\ 
  H^{*}(\mathcal{M}_{\Sigma_{1}};det^{\otimes d}) \otimes (H^{*}(M))^{\otimes n} \ar[r]^/-2em/{Id \otimes HF^{M}_{\Sigma_{2}}} & H^{*}(\mathcal{M}_{\Sigma_{1}};det^{\otimes d}) \otimes H^{*}(\mathcal{M}_{\Sigma_{2}};det^{\otimes d}) \otimes  (H^{*}(M))^{\otimes n_{-}}.   
}\end{xy}\\ 
\end{proposition}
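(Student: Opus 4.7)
The plan is to prove the diagram commutes on cohomology by constructing a chain-level homotopy between the two compositions in $C^{BM}_{*}(\mathcal{M}_{\Sigma_{1}} \times \mathcal{M}_{\Sigma_{2}}; det^{\otimes d} \otimes or) \otimes (C^{*}(f))^{\otimes n_{-}}$. First I would fix compatible vector field data for $\Sigma$, $\Sigma_{1}$, and $\Sigma_{2}$: for every ribbon graph $\Gamma$ of $\Sigma$ of the form $\Gamma = \Xi(\Gamma_{1},\Gamma_{2})$, I want ${\bf x}_{\Gamma}$ to restrict to ${\bf x}_{\Gamma_{1}}$ and ${\bf x}_{\Gamma_{2}}$ on the two subgraphs of $\Gamma$ obtained by cutting at midpoints of the neck edges, and moreover to agree with $\nabla_{g}f$ in a neighbourhood of each neck midpoint. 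A Baire-category argument analogous to Corollary \ref{CorTransv} guarantees that such compatible regular data exist.

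Next I would give a chain-level realisation of the transfer $\Xi^{!}: C^{BM}_{*}(\mathcal{M}_{\Sigma}; det^{\otimes d} \otimes or) \to C^{BM}_{*}(\mathcal{M}_{\Sigma_{1}} \times \mathcal{M}_{\Sigma_{2}}; det^{\otimes d} \otimes or)$ dual to $\Xi^{*}$. The map $\Xi$ factors as an open inclusion $\iota: \mathrm{Im}(\Xi) \hookrightarrow \mathcal{M}_{\Sigma}$ followed by a locally trivial fibre bundle $q: \mathcal{M}_{\Sigma_{1}} \times \mathcal{M}_{\Sigma_{2}} \to \mathrm{Im}(\Xi)$, the fibre being an open $n$-cube parametrising the ways of decomposing the length of each of the $n$ neck edges of $\Gamma$ as a sum $\ell_{1}+\ell_{2}$. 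Combining the two explicit chain-level transfers recalled in Section \ref{SubSecConv} yields a chain map $\Xi^{!CBM}: C^{BM}_{*}(\mathcal{M}_{\Sigma};\ldots) \to C^{BM}_{*}(\mathcal{M}_{\Sigma_{1}} \times \mathcal{M}_{\Sigma_{2}};\ldots)$, which vanishes on those $Z_{\Gamma,{\bf x}}$ for which $\Gamma \notin \mathrm{Im}(\Xi)$.

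The core step is a parameterised moduli space that interpolates between the two sides. For each $T \geq 0$ and each pair $(\Gamma_{1},\Gamma_{2})$ of top-dimensional ribbon graphs for $\Sigma_{1},\Sigma_{2}$, define a space $\mathcal{K}_{\Gamma_{1},\Gamma_{2},{\bf x},T}({\bf p}_{+},{\bf p}_{-})$ of pairs of flow graphs on $\Gamma_{1}$ and $\Gamma_{2}$ with vector field data ${\bf x}_{\Gamma_{1}},{\bf x}_{\Gamma_{2}}$, subject to the matching condition that for each pair of outgoing/incoming external edges $(e'_{1},e'_{2})$ paired by the gluing, the outgoing trajectory $\gamma_{e'_{1}}:[0,\infty) \to M$ of $\Gamma_{1}$ and the incoming trajectory $\gamma_{e'_{2}}:(-\infty,0] \to M$ of $\Gamma_{2}$ are joined by a gradient flow segment of $f$ of total length $T$, i.e.\ $\gamma_{e'_{2}}(-T/2) = \varphi_{T}^{\nabla f}(\gamma_{e'_{1}}(T/2))$. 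Let $\overline{\mathcal{K}}_{\Gamma_{1},\Gamma_{2},{\bf x}}({\bf p}_{+},{\bf p}_{-}) = \bigcup_{T \in [0,\infty)} \mathcal{K}_{\ldots,T}$, compactified by adding Morse breakings at the external edges of $\Gamma_{1},\Gamma_{2}$, collapses of internal edges, and the limits $T \to \infty$. The natural projection to $\mathcal{M}_{\Sigma_{1}} \times \mathcal{M}_{\Sigma_{2}}$ makes this an element of $C^{BM}_{*}$ with coefficients in $det^{\otimes d}\otimes or$. At $T=0$, the matching condition collapses to an actual equality $\gamma_{e'_{1}}(0) = \gamma_{e'_{2}}(0)$, and the resulting moduli space is canonically identified with $\Xi^{!CBM}Z_{\Xi(\Gamma_{1},\Gamma_{2}),{\bf x}}({\bf p}_{+},{\bf p}_{-})$: a flow graph on $\Xi(\Gamma_{1},\Gamma_{2})$ cut at midpoints of its neck edges is the same as a matched pair. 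At $T \to \infty$, standard Morse-theoretic compactness forces the intermediate gradient trajectory to break at a critical point, producing the summand of the composition $F^{M}_{\Sigma_{2}} \circ F^{M}_{\Sigma_{1}}$ indexed by $(\Gamma_{1},\Gamma_{2})$ and the intermediate critical-point tuple ${\bf q}$.

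The boundary identity for $\overline{\mathcal{K}}$, summed over all $(\Gamma_{1},\Gamma_{2})$, then yields the chain-homotopy equation $d \circ \Theta + \Theta \circ d = (\Xi^{!CBM} \otimes \mathrm{Id}) \circ F^{M}_{\Sigma} - (\mathrm{Id} \otimes F^{M}_{\Sigma_{2}}) \circ F^{M}_{\Sigma_{1}}$ up to the boundary terms from internal edge collapses, which cancel in pairs by the same orientation argument as in Lemma \ref{LemmaProofMain1.1}. The hardest parts will be the transversality for the $T$-parameterised moduli problem (requiring a one-parameter extension of Proposition \ref{PropGener} and of the compactness arguments in Proposition \ref{PropMbarProperties}), and the sign analysis for the local systems $det$ and $or$ under the gluing. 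The latter amounts to checking that the $n$-dimensional orientation coming from stretching the neck lengths matches the fibre orientation of $q$, and that the induced orientations of the vertex-and-half-edge vector spaces of $\Gamma_{1}\sqcup\Gamma_{2}$ combine correctly into that of $\Xi(\Gamma_{1},\Gamma_{2})$; this is a direct variant of the linear-algebra computation already performed in the proof of Lemma \ref{LemmaProofMain1.1}, applied now at the neck edges rather than at an internal edge being collapsed.
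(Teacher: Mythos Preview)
Your overall strategy---reduce to a chain-level homotopy, realise $\Xi^{*}$ on $C^{BM}_{*}$ via restriction to the open image followed by pullback along the $\mathbb{R}^{n}_{+}$-bundle, and interpolate with a one-parameter family of moduli spaces whose short-neck boundary gives $(\Xi^{!CBM}\otimes Id)\circ F^{M}_{\Sigma}$ and whose long-neck boundary (via Morse breaking) gives $(Id\otimes F^{M}_{\Sigma_{2}})\circ F^{M}_{\Sigma_{1}}$---is correct and is exactly the paper's plan.

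Two differences are worth flagging. The first is organisational: the paper does not stretch all $n$ necks with a common parameter, but instead builds a chain of $n$ homotopies $\Phi_{0}\simeq\Phi_{1}\simeq\cdots\simeq\Phi_{n}$ (Lemma \ref{LemmaHomotopyPhi1...}), where the intermediate $\Phi_{k}$ already treats the last $k$ attachment pairs as broken and only the $(n-k)$-th neck carries the stretching parameter $T$. This keeps each compactness and gluing argument one-dimensional in $T$, at the price of $n$ steps instead of one.

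The second difference is a genuine gap in your $T=0$ identification. Your $\mathcal{K}_{T=0}$ consists of pairs of flow graphs on $\Gamma_{1},\Gamma_{2}$ whose neck edges are the \emph{half-infinite} external edges of those graphs, governed by (\ref{EqFExt}) (gradient flow plus a compactly supported perturbation), matched at $t=0$. But $\Xi^{!CBM}Z_{\Xi(\Gamma_{1},\Gamma_{2}),{\bf x}}$ consists of flow graphs on $\Gamma$ whose neck edges are \emph{finite internal} edges governed by (\ref{EqFInt}), $\dot\gamma=l_{e}x_{e'}$, together with cut-position parameters. No compatible choice of vector field data makes a half-infinite trajectory satisfying (\ref{EqFExt}) equal to half of a finite trajectory satisfying (\ref{EqFInt}); the two parametrisations are of different type. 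The paper bridges this by replacing your ``insert a gradient segment of length $T$'' construction with an explicit $T$-dependent deformation of the flow equation on the neck edge itself (equation (\ref{EqDefyTtg}) with conditions (\ref{EqFConditionxe1})--(\ref{EqFConditionxe2})): at $T=l_{e_{1}}+l_{e_{2}}$ the equation agrees with the internal-edge equation used in $\Phi_{k}$, while for large $T$ the rescaled equation takes the shape of two external half-edges and permits Morse breaking as $T\to\infty$. Your interpolation must be set up on the equation in this way, not merely by inserting an extra segment, if the short end of the homotopy is to land on $(\Xi^{!CBM}\otimes Id)\circ F^{M}_{\Sigma}$.
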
 
Before giving the proof of this Proposition, we gather some preliminary arguments. First, let us express the gluing homomorphism (\ref{DefXiHom}) using geometric homology (we will for simplicity leave out the local coefficient systems in the notation). Via the Poincar\'e duality isomorphisms (\ref{EqPDBM}) on $\mathcal{M}_{\Sigma}$, $\mathcal{M}_{\Sigma_{1}}$ and $\mathcal{M}_{\Sigma_{2}}$, one obtains from $\Xi^{*}$ the corresponding transfer homomorphism
\begin{equation}\Xi^{!BM}_{*}: H^{BM}_{*}(\mathcal{M}_{\Sigma}) \rightarrow H^{BM}_{*+n}(\mathcal{M}_{\Sigma_{1}} \times \mathcal{M}_{\Sigma_{2}}). \label{EqDefMapHBMXi} \end{equation}
The remarks made in section \ref{SubSecConv} allow to identify $\Xi^{!BM}_{*}$ as the homomorphism induced by an explicit chain map \begin{equation} \Xi^{!CBM}_{*}: C^{BM}_{*}(\mathcal{M}_{\Sigma}) \rightarrow C^{BM}_{*+n}(\mathcal{M}_{\Sigma_{1}} \times \mathcal{M}_{\Sigma_{2}}).  \label{EqDefMapCBMXi} \end{equation}
To this end, we observe that the image $$\Xi(\mathcal{M}_{\Sigma_{1}} \times \mathcal{M}_{\Sigma_{2}}) \subset \mathcal{M}_{\Sigma}$$ is an open subset: it consists of the equivalence classes of all the metric ribbon graphs $\Gamma$ which can be obtained by gluing together graphs $\Gamma_{1}$ and $\Gamma_{2}$ from the ribbon graph decompositions of $\mathcal{M}_{\Sigma_{1}}$ and $\mathcal{M}_{\Sigma_{2}}$ respectively. There is a homeomorphism \begin{equation} \mathcal{M}_{\Sigma_{1}} \times \mathcal{M}_{\Sigma_{2}} \simeq \Xi(\mathcal{M}_{\Sigma_{1}} \times \mathcal{M}_{\Sigma_{2}}) \times \R^{n}_{+} \label{EqHomeoMSigmaSigma'} \end{equation} so that $\Xi$ can be identified as the composition of the projection $\pi$ to the first factor of $\Xi(\mathcal{M}_{\Sigma_{1}} \times \mathcal{M}_{\Sigma_{2}}) \times \R^{n}_{+}$ with the inclusion $i: \Xi(\mathcal{M}_{\Sigma_{1}} \times \mathcal{M}_{\Sigma_{2}}) \hookrightarrow \mathcal{M}_{\Sigma}$. This yields the following map (\ref{EqDefMapCBMXi}): given a genereator $(P,f_{P})$ of $C^{BM}_{*}(\mathcal{M}_{\Sigma})$, we first intersect with $\Xi(\mathcal{M}_{\Sigma_{1}} \times \mathcal{M}_{\Sigma_{2}})$ to obtain $(Q,f_{Q})=i^{!CBM}_{*}(P,f_{P}) \in C^{BM}_{*}(\Xi(\mathcal{M}_{\Sigma_{1}} \times \mathcal{M}_{\Sigma_{2}}))$. Then $\Xi^{!CBM}_{*}(P,f_{P})=(Q \times \R^{n},f_{Q \times \R^{n}})=\pi^{!CBM}_{*}(Q,f_{Q})$ is obtained from $(Q,f_{Q})$ by putting a bivalent vertex on each of the edges of $\Gamma$ which originate from attaching an outgoing edge of $\Gamma_{1}$ to an incoming edge of $\Gamma_{2}$. 
\begin{proof}[Proof of Proposition \ref{PropGlueAxiom}] To prove the Proposition, it suffices to show that the following diagram becomes commutative in homology:\\\\
\begin{xy} \hspace{20pt} \xymatrix @C=0.45in{ 
(C^{*}(f))^{\otimes n_{+}} \ar[r]^/-2em/{F^{M}_{\Sigma}} \ar[d]^/0em/{F^{M}_{\Sigma_{1}}} & C^{BM}_{*}(\mathcal{M}_{\Sigma}) \otimes (C^{*}(f))^{\otimes n} \ar[d]^/0em/{\Xi^{!CBM}_{*}\otimes Id} \\ 
  C^{BM}_{*}(\mathcal{M}_{\Sigma_{1}}) \otimes (C^{*}(f))^{\otimes n} \ar[r]^/-1.2em/{Id \otimes F^{M}_{\Sigma_{2}}} & C^{BM}_{*}(\mathcal{M}_{\Sigma_{1}} \times \mathcal{M}_{\Sigma_{2}}) \otimes  (C^{*}(f))^{\otimes n_{-}}. 
} \end{xy} \\ \\ 
We will find a chain homotopy between the compositions $(\Xi^{!CBM}_{*} \otimes Id) \circ F^{M}_{\Sigma}$ and $(Id \otimes F^{M}_{\Sigma_{2}}) \circ F^{M}_{\Sigma_{1}}$.\\ \\ 
Denote $\Phi_{0}=(\Xi^{!CBM}_{*} \otimes Id) \circ F^{M}_{\Sigma}$ and $\Phi_{n}=(Id \otimes F^{M}_{\Sigma_{2}}) \circ F^{M}_{\Sigma_{1}}$. Let us construct cochain maps \begin{equation} \Phi_{1}, \dots, \Phi_{n-1}: (C^{*}(f))^{\otimes n_{+}}  \rightarrow C^{BM}_{*}(\mathcal{M}_{\Sigma_{1}} \times \mathcal{M}_{\Sigma_{2}}) \otimes  (C^{*}(f))^{\otimes n_{-}} \label{EqDefMapsPhi1...} \end{equation}
and show that for $k=0, \dots ,n-1$, there is a chain homotopy between $\Phi_{k}$ and $\Phi_{k+1}$. Denote by $\Sigma(k)$ the surface obtained by gluing $\Sigma_{1}$ to $\Sigma_{2}$ along disjoint closed intervals around the first $n-k$ outgoing marked points on $\Sigma_{1}$ resp. the first $n-k$ incoming marked points on $\Sigma_{2}$. Thus $\Sigma(0)=\Sigma$, while $\Sigma(n)$ is the disjoint union of $\Sigma_{1}$ and $\Sigma_{2}$. \\ \\
We define 
$$F_{k}: (C^{*}(f))^{\otimes n_{+}} \rightarrow C^{BM}_{*}(\mathcal{M}_{\Sigma(k)}) \otimes (C^{*}(f))^{\otimes n_{-}}$$
\begin{equation} {\bf p}_{+} \mapsto \sum \limits_{\Gamma(k),{\bf q}} Z_{\Gamma(k),{\bf x}(k)}(({\bf p}_{+},{\bf q}),({\bf q},{\bf p}_{-})) \otimes {\bf p}_{-} \label{EqDefMapFk}, \end{equation}
where the sum is over all the graphs $\Gamma(k)$ in the ribbon graph decomposition (\ref{EqRibbonGrDec}) of $\Sigma(k)$ and over all tuples ${\bf q} \in Crit(f)^{\times k}$. Attaching the last $k$ outgoing edges of $\Gamma(k)$ to the last $k$ incoming edges yields a ribbon graph $\Gamma$ whose associated surface is $\Sigma$. We will assume that the vector field data ${\bf x}$ and ${\bf x}(k)$ for $\Sigma$ and $\Sigma (k)$ are chosen so that for each $k$ and all $\Gamma_{k}$, ${\bf x}$ and ${\bf x}(k)$ coincide on the complement of these edges.\\ \\ The homomorphism $\Phi_{k}$ is given as 
\begin{equation} \Phi_{k}= (\Xi^{k,!CBM}_{*} \otimes Id) \circ F_{k}, \label{EqConstrMapPhi1...} \end{equation}
where
\begin{equation} \Xi^{k}: \mathcal{M}_{\Sigma_{1}} \times \mathcal{M}_{\Sigma_{2}} \rightarrow \mathcal{M}_{\Sigma(k)} \end{equation}
is the gluing map from (\ref{EqGluinOnMSigma}).
\begin{lemma} \label{LemmaHomotopyPhi1...}
For each $k=0, \dots, n-1$, there is a chain homotopy $\Delta_{k}$ between $\Phi_{k}$ and $\Phi_{k+1}$.
\end{lemma}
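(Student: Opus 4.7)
The plan is to build $\Delta_k$ by counting a one-parameter family of flow graph moduli that interpolates between the configurations enumerated by $\Phi_k$ and $\Phi_{k+1}$. Writing $\Sigma(k+1)$ as obtained from $\Sigma(k)$ by undoing the $(n-k)$-th gluing, one sees that $\Phi_k$ and $\Phi_{k+1}$ differ only in how the $(n-k)$-th pair of matched marked points contributes to the flow graph: in $\Phi_k$ the pair corresponds to a single glued internal edge of free length, with the $\R_+$ position of the bivalent marker free as produced by the transfer $\Xi^{k,!CBM}_*$, while in $\Phi_{k+1}$ it corresponds to two semi-infinite external edges with matching critical-point asymptotic conditions at a common $q \in Crit(f)$, summed over $q$. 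The chain homotopy $\Delta_k$ should count those flow graphs in which the behavior at this single glue site is allowed to vary continuously between the two regimes.

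Following the continuation argument of Lemma \ref{LemmaChainHomConstr}, for each ribbon graph $\Gamma$ of $\Sigma(k+1)$ I would fix a smooth family $({\bf x}_T)_{T \in \R}$ of vector field data, deforming only on the two external edges corresponding to the $(n-k)$-th cut, such that for $T \leq -1$ the zero locus of the associated section ${\bf S} - \mathcal{F}_T$ of $\mathcal{E}$ agrees with the pullback under the transfer $\Xi^{k,!CBM}_*$ of the moduli defining $F_k$ (the two edges are fused into a single continuous internal edge of arbitrary length, with the endpoints of the two sides constrained to coincide in $M$), while for $T \geq 1$ the zero locus coincides with the moduli defining $F_{k+1}$ (the two external edges each carry critical-point asymptotics at a common $q$, summed over $q$). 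Denote by $\mathcal{N}^{(k)}_{\Gamma, T}({\bf p}_+, {\bf p}_-)$ the zero locus at parameter $T$, by $\mathcal{N}^{(k)}_{\Gamma}({\bf p}_+, {\bf p}_-) = \bigcup_{T \in \R} \mathcal{N}^{(k)}_{\Gamma, T}({\bf p}_+, {\bf p}_-)$ its union, and by $\overline{\mathcal{N}}^{(k)}_{\Gamma}({\bf p}_+, {\bf p}_-)$ its partial compactification constructed analogously to (\ref{EqDefNGammaBar}), equipped with the natural projection $\pi_\Gamma$. The chain homotopy is then defined by
\begin{equation*}
\Delta_k({\bf p}_+) = \sum_{\Gamma, {\bf p}_-} (\Xi^{k+1,!CBM}_* \otimes Id)\bigl((\overline{\mathcal{N}}^{(k)}_\Gamma({\bf p}_+, {\bf p}_-), \pi_\Gamma) \otimes {\bf p}_-\bigr).
\end{equation*}

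Transversality, properness of $\pi_\Gamma$, and the manifold-with-corners structure of $\overline{\mathcal{N}}^{(k)}_\Gamma$ all follow from the parametrized Fredholm theory used in Propositions \ref{PropGener} and \ref{PropMbarProperties}, exactly as in the proof of Lemma \ref{LemmaChainHomConstr}. The boundary of $\overline{\mathcal{N}}^{(k)}_\Gamma$ decomposes into three types of strata: external-edge trajectory breakings, which supply the $d \circ \Delta_k + \Delta_k \circ d$ terms; collapses of internal edges of $\Gamma$ distinct from the two distinguished edges at the cut site, which cancel in pairs by the sign computation of Lemma \ref{LemmaProofMain1.1}; and the two limits $T \to \pm \infty$, which after application of the transfer $\Xi^{k+1,!CBM}_*$ recover $\Phi_k({\bf p}_+)$ and $\Phi_{k+1}({\bf p}_+)$ respectively.

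The main obstacle I expect is the orientation and sign bookkeeping: one must verify that the trivializations of $det^{\otimes d} \otimes or$ induced on the two boundary strata $T = \pm \infty$ of $\overline{\mathcal{N}}^{(k)}_\Gamma$ match those used in the definitions of $\Phi_k$ and of $\Phi_{k+1}$ up to the expected global sign, so that the boundary identity reads $d \circ \Delta_k + \Delta_k \circ d = \pm(\Phi_{k+1} - \Phi_k)$. This reduces to tracking how orderings of vertices, edges, and half-edges transform under the degeneration that replaces one internal glue edge by two external edges meeting at a critical point, and is analogous to, though simpler than, the sign comparison carried out in Lemma \ref{LemmaProofMain1.1}.
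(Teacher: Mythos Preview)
Your overall strategy is correct—build a one-parameter family of moduli spaces interpolating between the configurations of $\Phi_k$ and $\Phi_{k+1}$ and read off the chain homotopy from its boundary—but the implementation you describe does not produce that interpolation. The difficulty is structural: in $\Phi_k$ the $(n-k)$-th glue site contributes a single \emph{finite} internal edge (a flow segment on $[0,1]$ with no critical-point asymptotics), whereas in $\Phi_{k+1}$ it contributes two semi-infinite external edges converging to a common critical point. If you work in $\mathcal{B}_\Gamma$ for $\Gamma$ a ribbon graph of $\Sigma(k+1)$, the two cut edges are by definition external with prescribed critical-point asymptotics at infinity, and no homotopy of the vector field data $({\bf x}_T)_{T\in\R}$ on that fixed domain can make the zero locus for $T \le -1$ coincide with the $F_k$ moduli; the underlying Banach manifolds are different. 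Your description also conflates two roles of $T$: you assert both that the zero locus \emph{equals} the $F_k$ moduli for all $T \le -1$ and that the $T\to -\infty$ boundary stratum recovers $\Phi_k$, and these are not simultaneously compatible.

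The paper resolves this not by a continuation homotopy in the style of Lemma~\ref{LemmaChainHomConstr} but by a \emph{neck-stretching} argument on the glued internal edge. One keeps the $(n-k)$-th attachment pair fused into a single edge $e'$ parametrized by $[0,1]$, introduces a real parameter $T \ge l_{e_1}+l_{e_2}$, and replaces the flow equation on $e'$ by one of the form $\dot\gamma_{e'} = T\,\nabla_g f(\gamma_{e'}) + y_{e'}(T,\cdot)$, with $y_{e'}$ interpolating between the internal-edge datum and the cut-edge data of ${\bf x}(k+1)$. The boundary at the finite end $T = l_{e_1}+l_{e_2}$ is exactly the $\Phi_k$ moduli, while the compactification as $T\to\infty$ forces the trajectory on $e'$ to break at a critical point and yields $\Phi_{k+1}$. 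This is the standard Morse-theoretic gluing mechanism rather than the continuation mechanism, and it is what is needed here; the remaining boundary strata (external-edge breakings and collapses of the other internal edges) then contribute precisely as you outline.
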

\begin{proof}
To simplify terminology, we will refer to a pair consisting of an outgoing edge of $\Gamma_{1}$ and the corresponding incoming edge of $\Gamma_{2}$ as an {\it attachment pair}. \\ \\
The above formal definition of the homomorphism $\Phi_{k}$ means the following. The map $(Id \otimes F^{M}_{\Sigma_{2}}) \circ F_{\Sigma_{1}}$ is given by a count of chains corresponding to pairs of graphs in the ribbon graphs decompositions of $\mathcal{M}_{\Sigma_{1}}$ and $\mathcal{M}_{\Sigma_{2}}$, i. e. \begin{equation} (Id \otimes F^{M}_{\Sigma_{2}}) \circ F^{M}_{\Sigma_{1}}: {\bf p}_{+} \mapsto \sum \limits_{\Gamma_{1},\Gamma_{2},{\bf q},{\bf p}_{-}} Z_{{\Gamma_{1}},{\bf x}}({\bf p}_{+},{\bf q}) \otimes Z_{{\Gamma_{2}},{\bf x}'}({\bf q},{\bf p}_{-}) \otimes {\bf p}_{-}, \label{EqExpressFSigmaSigma'} \end{equation}
where the sum is over ribbon graphs $\Gamma_{1}$ and $\Gamma_{2}$ in the ribbon graph decompositions of $\Sigma_{1}$ and of $\Sigma_{2}$ respectively and over all tuples of critical points $({\bf q},{\bf p}_{-}) \in (Crit(f))^{\times (n+n_{-})}$. The map $\Phi_{k}$ is obtained by counts of geometric chains as on the right-hand of (\ref{EqExpressFSigmaSigma'}), but where instead of a broken trajectory of the gradient flow, to the first $n-k$ attachment pairs, a finite piece of a flow trajectory is associated, namely to these attachment pairs  solutions of (\ref{EqFInt}) are associated, where the parameter $l_{e}$ is given as the sum of the lengths of the two edges of the pair. We denote the spaces of such flow graphs, partially compactified as in Definitioin \ref{DefMGammaxBar} by allowing breaking of trajectories along external edges as well as collapsing of internal edges, by $\overline{\mathcal{M}}_{\Gamma_{1},\Gamma_{2},k}({\bf p}_{-},{\bf q}, {\bf p}_{+})$ and write
\begin{equation} \pi_{\Gamma_{1},\Gamma_{2}}: \overline{\mathcal{M}}_{\Gamma_{1},\Gamma_{2},k}({\bf p}_{-},{\bf q}, {\bf p}_{+}) \rightarrow \mathcal{M}_{\Sigma_{1}} \times \mathcal{M}_{\Sigma_{2}} \label{EqDefPiGammaGamma'} \end{equation} for the projection which maps each graph flow to the underlying metric structures on $\Gamma_{1}$ and on $\Gamma_{2}$. Using this notation, $\Phi_{k}$ is given by 
\begin{equation} \Phi_{k}({\bf p}_{+}) = \sum \limits_{\Gamma_{1},\Gamma_{2},{\bf q},{\bf p}_{-}} Z_{\Gamma_{1}, \Gamma_{2},k}({\bf p}_{+},{\bf q},{\bf p}_{-}) \otimes {\bf p}_{-},  \label{EqExpressionPhi1...} \end{equation} 
where $\Gamma_{1}$ and $\Gamma_{2}$ are as in (\ref{EqExpressFSigmaSigma'}), ${\bf q} \in Crit(f)^{\times k}$ and \begin{equation} Z_{\Gamma_{1}, \Gamma_{2},k}({\bf p}_{+},{\bf q},{\bf p}_{-})=(\overline{\mathcal{M}}_{\Gamma_{1},\Gamma_{2},k}({\bf p}_{-},{\bf q}, {\bf p}_{+}),\pi_{\Gamma_{1},\Gamma_{2}}). \label{EqDefZGammaGamma'} \end{equation}
The construction of a chain homotopy between $\Phi_{k}$ and $\Phi_{k+1}$ relies on the study of spaces $\overline{\mathcal{L}}_{\Gamma_{1},\Gamma_{2},k}({\bf p}_{-},{\bf q}, {\bf p}_{+})$ which we now introduce.\\ \\ Elements of $\overline{\mathcal{L}}_{\Gamma_{1},\Gamma_{2},k}({\bf p}_{-},{\bf q}, {\bf p}_{+})$ are pairs consisting of a positive number $T$ together with a flow graph of the same form as in the case of $\overline{\mathcal{M}}_{\Gamma_{1},\Gamma_{2},k}({\bf p}_{-},{\bf q}, {\bf p}_{+})$, however the equation (\ref{EqFInt}) corresponding to the $(n-k)$-th attachment pair is changed to
\begin{equation} F_{e'}({\textbf{\textit l}},{\bf \gamma})(t)= T\nabla_{g}(f(\gamma_{e'}(t)))+y_{e'} (T,{\textbf{\textit l}},t,\gamma_{e'}(t)), \label{EqDefyTtg}\end{equation}
subject to the following:
\begin{enumerate}
\item $T$ is greater or equal to the sum $l_{e_{1}}+l_{e_{2}}$ of the lengths of the two edges of the attachment pair.
\item For $l_{e_{1}}+l_{e_{2}} \leq T \leq l_{e_{1}}+l_{e_{2}} + 1$, 
\begin{equation} y_{e'}(T,{\textbf{\textit l}}, t,\cdot)=x_{e_{'}}({\textbf{\textit l}},t,\cdot)\label{EqFConditionxe1} 
 \end{equation} for all $t\in[0,1]$.
\item For every $T>l_{e_{1}}+l_{e_{2}}+2$, the vector field $y_{e'} (T,{\textbf{\textit l}},t,\cdot)$ satisfies
\begin{equation} y_{e'} (T,{\textbf{\textit l}},t,\cdot)=\begin{cases} \sigma(Tt) ({\bf x}(k))_{e'_{1}} ({\textbf{\textit l}}_{1},Tt,\cdot) & \text{ for } 0 \leq t \leq \frac{1}{2},\\ \sigma(T(1-t))({\bf x}(k))_{e'_{2}}({\textbf{\textit l}}_{2},Tt,\cdot) & \text{ for } \frac{1}{2} \leq t \leq 1,
\end{cases} \label{EqFConditionxe2} \end{equation}
\end{enumerate} 
where $e'_{1}$ and $e'_{2}$ are the elements of the $(n-k)$th attachment pair and $e'$ the edge obtained by gluing $e'_{1}$ and $e'_{2}$ and erasing the resulting bivalent vertex. Here ${\textbf{\textit l}}_{1}$ and ${\textbf{\textit l}}_{2}$ are the metric structures on $\Gamma_{1}$ and $\Gamma_{2}$ respectively (we also note that the orientations of $e'_{1}$ and $e'_{2}$ define an orientation of $e'$). This space is again partially compactified as in Definition \ref{DefMGammaxBar} by allowing broken trajectories at the edges corresponding to the boundary marked points of $\Sigma(k)$ and collapsing of the remaining edges, but in addition we allow breaking along $e'$ (i. e. we take the union with the spaces $\overline{\mathcal{M}}_{\Gamma_{1},\Gamma_{2},k+1}({\bf p}_{-},{\bf q}', {\bf p}_{+})$ for ${\bf q}' \in (Crit(f))^{\times(k+1)}$). \\ \\
Denoting again by $\pi_{\Gamma_{1},\Gamma_{2}}: \overline{\mathcal{L}}_{\Gamma_{1},\Gamma_{2},k}({\bf p}_{-},{\bf q}, {\bf p}_{+}) \rightarrow \mathcal{M}_{\Sigma_{1}} \times \mathcal{M}_{\Sigma_{2}}$ the natural projection, it follows as in Proposition \ref{PropGener} that $X_{\Gamma_{1},\Gamma_{2},k}({\bf p}_{-},{\bf q}, {\bf p}_{+})=(\overline{\mathcal{L}}_{\Gamma,\Gamma',k}({\bf p}_{-},{\bf q}, {\bf p}_{+}),\pi_{\Gamma_{1},\Gamma_{2}})$ is a well-defined element of $C^{BM}_{*}(\mathcal{M}_{\Sigma_{1}} \times \mathcal{M}_{\Sigma_{2}})$. We denote
$$\Delta_{k}: (C^{*}(f))^{\otimes n_{+}}  \rightarrow C^{BM}_{*}(\mathcal{M}_{\Sigma_{1}} \times \mathcal{M}_{\Sigma_{2}}) \otimes  (C^{*}(f))^{\otimes n_{-}},$$
\begin{equation} {\bf p}_{+} \mapsto \sum \limits_{\Gamma_{1},\Gamma_{2},{\bf q},{\bf p}_{-}} X_{\Gamma_{1}, \Gamma_{2},k}({\bf p}_{+},{\bf q},{\bf p}_{-}) \otimes {\bf p}_{-}, \label{EqDefMapDelta} \end{equation}
where the sum is as in (\ref{EqExpressionPhi1...}). \\ \\
The boundary of $(\overline{\mathcal{L}}_{\Gamma_{1},\Gamma_{2},k}({\bf p}_{-},{\bf q}, {\bf p}_{+}),\pi_{\Gamma_{1},\Gamma_{2}})$ is a disjoint union of components of the following form. Firstly, corresponding to the case $T=l_{e_{1}}+l_{e_{2}}$, we have the components of $\overline{\mathcal{M}}_{\Gamma_{1},\Gamma_{2},k}({\bf p}_{-},{\bf q}, {\bf p}_{+})$. Secondly, corresponding to the case $T \rightarrow \infty$, we have boundary components of the form $\overline{\mathcal{M}}_{\Gamma_{1},\Gamma_{2},k+1}({\bf p}_{-},{\bf q}', {\bf p}_{+})$, where ${\bf q}' \in (Crit(f))^{k+1}$. Thirdly, there are the boundary components corresponding to the breaking of trajectories along the incoming edges of $\Gamma_{1}$ and the outgoing edges of $\Gamma_{2}$. Finally, we have the boundary components corresponding to collapsing an internal edge of $\Gamma_{1}$ or of $\Gamma_{2}$.\\ \\
Summing up all the boundary components of the first two types yields
$$\Phi_{k+1}-\Phi_{k},$$ while the sum of the boundary components of the third type and all the expressions $(\partial X_{\Gamma_{1}, \Gamma_{2},k}({\bf p}_{+},{\bf q},{\bf p}_{-})) \otimes {\bf p}_{-}$ yields $$d \circ \Delta_{k}+\Delta_{k} \circ d.$$ Finally, using Lemma \ref{LemmaProofMain1.1}, the sum of all the components of the last type is zero.
\end{proof}
This completes the Proof of Proposition \ref{PropGlueAxiom}.
\end{proof}

{\footnotesize HUMBOLDT-UNIVERSIT\"AT BERLIN, INSTITUT F\"UR MATHEMATIK,  RUDOWER \\ CHAUSSEE 25, 12489 BERLIN, GERMANY}\\
{\it Email Address:} frommv@math.hu-berlin.de 
\end{document}